\newtheorem{theorem}{Theorem}[section]
\newtheorem{lemma}[theorem]{Lemma}
\newtheorem{proposition}[theorem]{Proposition}
\newtheorem{remark}[theorem]{Remark}
\newcommand{\R}{\mathbb{R}}
\newcommand{\G}{\mathcal{G}}
\newcommand{\N}{\mathbb{N}}
\newcommand{\E}{\mathbb{E}}
\newcommand{\V}{\mathbb{V}}
\newcommand{\HH}{\mathcal{H}}
\newcommand{\K}{\mathcal{K}}
\newcommand{\Z}{\mathbb{Z}}
\newcommand{\NN}{\mathcal N}
\newcommand{\LL}{\mathcal L}
\newcommand{\JJ}{\mathcal J}
\newcommand{\EE}{\mathcal{E}}
\newcommand\vv{\textsc{v}}
\tikzstyle{nodino}=[circle,draw,fill,inner sep=0pt,minimum size=0.5mm]
\tikzstyle{infinito}=[circle,inner sep=0pt,minimum size=0mm]
\tikzstyle{nodo}=[circle,draw,fill,inner sep=0pt, minimum size=0.5*width("k")]
\tikzstyle{nodo_vuoto}=[circle,draw,inner sep=0pt, minimum size=0.5*width("k")]
\tikzset{every loop/.style={min distance=10mm,in=300,out=240,looseness=10}}
\tikzset{place/.style={circle,thick,draw=blue!75,fill=blue!20,minimum
		size=6mm}}
\tikzset{place2/.style={circle,thick,draw=red!75,fill=red!20,minimum
		size=6mm}}
\title[ ]{Non--uniqueness of normalized ground states for nonlinear Schr\"odinger equations on metric graphs}
\author[ ]{Simone Dovetta}
\address[Simone Dovetta]{Politecnico di Torino, Dipartimento di Scienze Matematiche ``G.L. Lagrange'', Corso Duca degli Abruzzi, 24, 10129 Torino, Italy}
\email{simone.dovetta@polito.it}
\begin{document}

\maketitle

\begin{abstract}
	We establish general non-uniqueness results for normalized ground states of nonlinear Schr\"odinger equations with power nonlinearity on metric graphs. Basically, we show that, whenever in the $L^2-$subcritical regime a graph hosts ground states at every mass, for nonlinearity powers close to the $L^2$-critical exponent $p=6$ there is at least one value of the mass for which ground states are non-unique. As a consequence, we also show that, for all such graphs and nonlinearities, there exist action ground states that are not normalized ground states.
\end{abstract}

\section{Introduction and main results}
The aim of the present paper is to provide general non-uniqueness results for normalized ground states of NonLinear Schr\"odinger (NLS) equations on metric graphs.
\smallskip

Given a {\em connected metric graph} $\G=(\V_\G,\E_\G)$, i.e. a one-dimensional singular variety obtained gluing together a certain number of intervals (the {\em edges}) at some of their endpoints (the {\em vertices}), an {\em energy ground state} with mass $\mu$ (or {\em normalized} ground state with mass $\mu$) is a solution $u\in H_\mu^1(\G)$ of the minimization problem
\begin{equation}
	\label{eq:infE}
	\EE_{p,\G}(\mu):=\inf_{u\in H_\mu^1(\G)}E_p(u,\G)
\end{equation}
where, for fixed $p>2$, the NLS energy functional $E_p: H^1(\G)\to\R$ is defined by
\begin{equation}
	\label{energy}
	E_p(u,\G):=\frac12\|u'\|_{L^2(\G)}^2-\frac1p\|u\|_{L^p(\G)}^p\,,
\end{equation}
and $H_\mu^1(\G)$ is the set of functions with prescribed mass (the $L^2$-norm) equal to $\mu$
\[
H_\mu^1(\G):=\left\{u\in H^1(\G)\,:\, \|u\|_{L^2(\G)}^2=\mu\right\}.
\]
It is well-known that, up to a change of sign, any ground state $u$ is a positive solution of the NLS equation with homogeneous Kirchhoff conditions at the vertices
\begin{equation}
	\label{eq:nlse}
	\begin{cases}
	u''+|u|^{p-2}u=\lambda u & \text{on each edge }e\in\E_\G\\
	\sum_{e\succ \vv}\frac{du}{dx_e}(\vv)=0 & \text{for every vertex }\vv\in\V_\G
	\end{cases}
\end{equation}
with 
\begin{equation}
\label{eq:Lu}
\lambda=\LL_p(u,\G):=\frac{\|u\|_{L^p(\G)}^p-\|u'\|_{L^2(\G)}^2}{\|u\|_{L^2(\G)}^2}\,.
\end{equation}
Here, $e\succ \vv$ means that the edge $e$ is incident at the vertex $\vv$, and $\displaystyle\frac{du}{dx_e}(\vv)$ stands for the outgoing derivative of $u$ at the vertex $\vv$ along the edge $e$.

In the last few years, the existence of energy ground states on metric graphs has been widely investigated. Since the literature on the subject is by now quite large and constantly growing (see e.g. \cite{ADST,AST,AST2,ASTcmp,BMP,BDL1,BDL2,BC,DT,KMPX,LLZ,PS,PSV} and references therein, and \cite{ABR,KNP} for comprehensive reviews), we do not even try to revise it. To provide a general overview, we simply recall that energy ground states never exist in the $L^2$-supercritical regime $p>6$ (where $\EE_{p,\G}(\mu)=-\infty$ for every $\mu$ and $\G$), whereas, in the $L^2$-subcritical $p\in(2,6)$ and $L^2$-critical $p=6$ cases, existence is strongly sensitive to the topology and the metric of the graph, and to the actual value of the mass. In particular, for our purposes here it is relevant to point out for what graphs ground states exist for every mass $\mu>0$ and every $L^2$-subcritical power $p\in(2,6)$, independently of the length of the edges. The largest family where this occurs is that of compact graphs (i.e. graphs with finitely many vertices and edges, all bounded, see \cite[Theorem 1.1]{D18}). In the noncompact setting, on the contrary, such a general existence result is very unlikely. For noncompact graphs with finitely many edges (at least one of which unbounded), the few structures for which this is known to happen are those in Figure \ref{fig:GShalf} below, identified in \cite[Section 3]{AST2}. For noncompact graphs with infinitely many edges, many papers proved that normalized ground states with small mass usually do not exist when $p$ is slightly less than $6$ (see e.g. \cite{ADST,DT,DSTjlms}). The only known exception is given by $\Z$-periodic graphs, i.e. graphs obtained gluing together in a $\Z$-symmetric pattern infinitely many copies of a given compact graph (for a rigorous definition see \cite[Definition 4.1.1]{BK} or \cite[Section 2]{D19}). In this latter case, existence is guaranteed for every $\mu>0$ and $p\in(2,6)$ by \cite[Theorem 1.1]{D19}.

\begin{figure}[t]
	\centering
	\subfloat[][]{
		\begin{tikzpicture}[xscale= 0.5,yscale=0.5]
			\node at (0,0) [nodo] (00) {};
			\node at (-7,0) [infinito] (-55) {};
			\draw[-] (00)--(-55);
			\node at (-7.5,0) [infinito] {$\footnotesize\infty$};
			\draw (2,0) circle (2); 
		\end{tikzpicture}
	}\qquad
	\subfloat[][]{
		\begin{tikzpicture}[xscale= 0.5,yscale=0.5]
			\node at (0,0) [nodo] (00) {};
			\node at (0,2) [nodo] (02) {};
			\node at (-4,-5) [infinito] (inf1) {};
			\node at (-1.5,-6) [infinito] (inf2) {};
			\node at (1.5,-6) [infinito] (inf3) {};
			\node at (4,-5) [infinito] (inf4) {};
			\draw[-] (00)--(02);
			\draw[-] (00)--(inf1);
			\draw[-] (00)--(inf2);
			\draw[-] (00)--(inf3);
			\draw[-] (00)--(inf4);
			\node at (-4,-5.2) [infinito] {$\footnotesize\infty$};
			\node at (-1.5,-6.2) [infinito] {$\footnotesize\infty$};
			\node at (1.5,-6.2) [infinito] {$\footnotesize\infty$};
			\node at (4,-5.2) [infinito] {$\footnotesize\infty$};
	\end{tikzpicture}}
	\caption{A tadpole graph (\textsc{\tiny A}) and a graph with some half-lines and one bounded edge glued together at the same vertex (\textsc{\tiny B}).}
	\label{fig-NlT}
\end{figure}
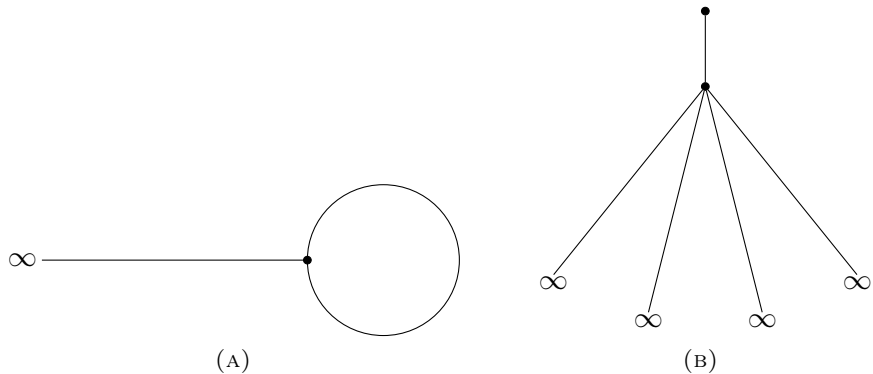

Contrary to existence, very little is known on uniqueness of normalized ground states. This is no surprise, since uniqueness issues for NLS equations are known to be very challenging also in contexts different than metric graphs. The problem may become even harder on graphs, since the lack of symmetry and scale invariance of the domain does not allow to exploit the few techniques developed in the literature in similar settings. Moreover, when focusing on energy ground states, the first question towards uniqueness (or non-uniqueness) is whether all ground states with the same mass are solutions of the {\em same} NLS equation, since a priori the parameter $\lambda$ in \eqref{eq:nlse} may depend on the specific ground state at hand (and not only on its mass).

To the best of our knowledge, the first systematic study on uniqueness of normalized ground states on metric graphs is \cite{DST20}. To illustrate the main contributions of that paper, let
\[
\mathfrak{E}_{p,\G}(\mu):=\left\{u\in H_\mu^1(\G)\,:\, E_p(u,\G)=\EE_{p,\G}(\mu)\right\}
\]
be the set of all ground states of $E_p(\cdot,\G)$ with mass $\mu$, and set
\begin{equation}
\label{eq:L+-}
\Lambda_{p,\G}^-(\mu):=\inf_{u\in \mathfrak{E}_{p,\G}(\mu)}\LL_p(u,\G),\qquad \Lambda_{p,\G}^+(\mu):=\sup_{u\in \mathfrak{E}_{p,\G}(\mu)}\LL_p(u,\G)\,.
\end{equation}
In \cite[Theorem 2.5]{DST20}, it has been shown that, for every metric graph $\G$ and every $p\in(2,6]$, there exists an at most countable set $\displaystyle Z_{p,\G}\subset \left\{\mu>0\,:\,\mathfrak{E}_{p,\G}(\mu)\neq\emptyset \right\}$ such that
\[
\Lambda_{p,\G}^-(\mu)\neq\Lambda_{p,\G}^+(\mu)\iff \mu\in Z_{p,\G}\,,
\] 
namely energy ground states at the same mass solve the same NLS equation \eqref{eq:nlse} (i.e. with the same value of $\lambda$), for all but at most countably many values of the mass (those in $Z_{p,\G}$). 
This has then been exploited (see \cite[Theorems 2.8--2.9]{DST20}) to prove uniqueness of energy ground states, for all masses not in $ Z_{p,\G}$, on two simple graph structures: the tadpole graph (Figure \ref{fig-NlT}(\textsc{\tiny A})), and graphs with a finite number of half-lines and one bounded edge glued together at a common vertex (Figure \ref{fig-NlT}(\textsc{\tiny B})). 

Looking at the proof of \cite[Theorem 2.5]{DST20}, the set $Z_{p,\G}$ pops up because the argument therein proves first that $\Lambda_{p,\G}^\pm$ are strictly increasing functions, and then uses the abstract fact that a monotone function is continuous out of at most countably many points.  It is then quite natural to wonder whether the presence of this set is just a mere consequence of this specific argument (that could be eliminated with a different proof of the continuity of $\Lambda_{p,\G}^\pm$), or if it is an actual feature of the problem. In other words, one may raise the following question:
\begin{equation}
	\label{Q1}
	\text{is }Z_{p,\G}=\emptyset\text{ for every }\G\text{ and every }p\in(2,6)?\tag{Q1}
\end{equation}

\medskip

\noindent Unfortunately, the answer to \eqref{Q1} is negative: \cite[Theorem 2.11]{DST20} shows that, for every fixed $p\in(2,6)$ and $\mu>0$, there exists at least one graph $\G$ such that $\mu\in Z_{p,\G}$. 

Even though this guarantees that there exists at least one graph where the non-uniqueness of $\lambda$ in \eqref{eq:nlse} corresponding to $Z_{p,\G}\neq\emptyset$ does occur, the explicit construction developed in \cite[Theorem 2.11]{DST20} is technically demanding and relies on a delicate combination of ad hoc topological and metric features. Actually, that \eqref{Q1} can be answered in the negative by a sufficiently involved counterexample is not so unexpected, given the high level of generality of the question and the wide variety of possible graph structures that one can conceive. However, this gives no insight on whether a non-empty $Z_{p,\G}$ is a pathological, unlikely event, that happens only on suitably complicated graphs. To this extent, it is perhaps more meaningful to replace \eqref{Q1} with the weaker version:
\begin{equation}
\label{Q}
\text{is there any class of graphs }\G\text{ for which }Z_{p,\G}=\emptyset\text{ for every }p\in(2,6)?\tag{Q2}
\end{equation} 

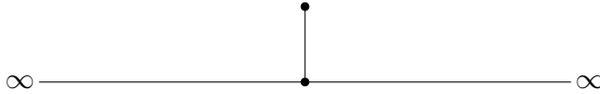
\begin{figure}[t!]
	\centering
	\begin{tikzpicture}[xscale=0.5,yscale=0.5]
	\node at (0,0) [nodo] {};
	\node at (0,2) [nodo] {};
	\draw (0,0)--(0,2);
	\draw (0,0)--(-7,0);
	\draw (0,0)--(7,0);
	\node at (-7.5,0) [infinito] {$\footnotesize\infty$};
	\node at (7.5,0) [infinito] {$\footnotesize\infty$};
	\end{tikzpicture}
	\caption{A $\mathcal{T}$-graph.}
	\label{fig:T}
\end{figure} 

\smallskip
\noindent Turning from \eqref{Q1} to \eqref{Q} is not pointless a priori, since it is well-known (see e.g. \cite{LC}) that on the half-line and on the line (graphs with one vertex and one/two unbounded edges, respectively) \eqref{Q} has a positive answer. Partial affirmative answers are available also on the tadpole graph, where normalized ground states are known to be unique, for every mass for which they exist, when $p=4$ (see \cite[Remark 1.3]{KMPX}) and $p=6$ (see \cite[Remark 1]{NP}). However, the recent paper \cite{ACT} unexpectedly showed that, if a class of graphs answering  \eqref{Q} in the affirmative exists, it does not contain one of the simplest graphs possible, the $\mathcal{T}$-graph (Figure \ref{fig:T}) given by one bounded edge and two half-lines glued together at the same vertex. Precisely, \cite[Theorem 1.5]{ACT} proved that, when $\G$ is the $\mathcal{T}$-graph, there exists $\varepsilon>0$ such that $Z_{p,\G}\neq\emptyset$ for every $p\in(6-\varepsilon,6)$. 
In view of the elementary structure of the $\mathcal{T}$-graph, at first sight this result is very surprising, and it is perhaps even more unforeseen if we think that $\mathcal{T}$-graphs with very short bounded edges could be somehow interpreted as small perturbations of the line.

\begin{figure}[t]
	\centering
	\subfloat[ ][ ]{
		\begin{tikzpicture}[xscale=0.6,yscale=0.6]
			
			\node at (0,0) [nodo] {};
			\node at (0,-1) [nodo] {};
			\draw (0,1) circle (1); 
			\draw (0,-1)--(0,0);
			\draw (0,-1)--(-5,-1); 
			\draw (0,-1)--(5,-1);
			\node at (-5.5,-1) [infinito] {$\footnotesize\infty$};
			\node at (5.5,-1) [infinito] {$\footnotesize\infty$};
			
		\end{tikzpicture}
	}
	
	\subfloat[][]{
		\begin{tikzpicture}[xscale=0.7,yscale=0.7]
			\node at (-5.5,-1) [infinito] {$\footnotesize\infty$};
			\node at (0,-1) [nodo] {};
			\draw (0,-1)--(-5,-1);
			\node at (2,0) [nodo] {};
			\node at (1,-2) [nodo] {};
			\draw (2,0)--(0,-1);
			\draw (1,-2)--(0,-1);
		\end{tikzpicture}
	}\qquad\qquad
	\subfloat[][]{
		\begin{tikzpicture}[xscale=0.7,yscale=0.7]
			\node at (-5.5,-1) [infinito] {$\footnotesize\infty$};
			\node at (0,-1) [nodo] {};
			\draw (0,-1)--(-5,-1);
			\node at (.5,0.1) [nodo] {};
			\node at (1.4,-1.5) [nodo] {};
			\draw (.5,0.1)--(0,-1);
			\draw (1.4,-1.5)--(0,-1);
			\node at (2,0) [nodo] {};
			\draw (0,-1)--(2,0);
		\end{tikzpicture}
	}
	\caption{Noncompact graphs with finitely many edges hosting $L^2$-subcritical normalized ground states for every mass (see \cite[Section 3]{AST2}): the signpost graph (\textsc{\tiny A}), the 2-fork graph (\textsc{\tiny B}) and the 3-fork graph (\textsc{\tiny C}).}
	\label{fig:GShalf}
\end{figure}
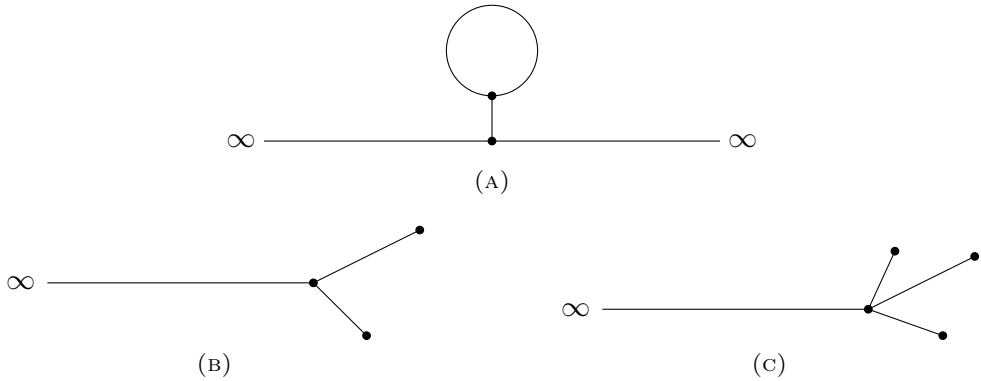

\smallskip
In this paper we give a general {\em negative} answer to \eqref{Q}. Roughly, we show that, except for the half-line and the line (and its equivalent version in the towers of bubbles \cite[Example 2.4]{AST}), all graphs where energy ground states exist for every $\mu>0$ and every $p\in(2,6)$ behave as $\mathcal{T}$-graphs, i.e. $Z_{p,\G}\neq\emptyset$ when $p$ is close to $6$.  

As we said before, the most general class of graphs where $L^2$-subcritical normalized ground states always exist, regardless of any other topological or metrical property, is that of compact graphs. Therefore, this is the best setting in which to state the first main result of the paper.

\begin{theorem}
	\label{thm:compact}
	For every compact graph $\G$ there exists a value $p_\G>2$ (depending on $\G$) such that $Z_{p,\G}\neq\emptyset$ for every $p\in[p_\G,6)$.
\end{theorem}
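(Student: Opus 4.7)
My plan is to produce, for $p$ close enough to $6$, a mass $\mu^\star$ at which two distinct energy ground states exist with different Lagrange multipliers. The two competing profiles I exploit are the constant $u_c(\mu):=\sqrt{\mu/|\G|}\in H_\mu^1(\G)$, which for every $\mu>0$ solves \eqref{eq:nlse} with $\LL_p(u_c,\G)=(\mu/|\G|)^{(p-2)/2}$ and $E_p(u_c,\G)=-\mu^{p/2}/(p|\G|^{(p-2)/2})$, and a concentrated profile modeled on the ground state soliton $Q_p$ of the NLS on $\R$, transplanted into a single edge of $\G$.

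First, combining the Poincar\'e inequality on $\G$ with a subcritical Gagliardo--Nirenberg estimate, I would prove that for every $p\in(2,6)$ there exists $\mu_0(p,\G)>0$ such that $u_c(\mu)$ is the unique element of $\mathfrak{E}_{p,\G}(\mu)$ for $\mu\in(0,\mu_0]$. Next, plugging into $E_p(\cdot,\G)$ a rescaled soliton $\varphi_\mu\in H_\mu^1(\G)$ localized in a single edge, and using the Pohozaev--Nehari identity $E_{p,\R}(Q_\lambda)=\frac{p-6}{2(p+2)}\lambda\|Q_\lambda\|_{L^2(\R)}^2$ together with the scaling $\lambda=(\mu/\|Q_p\|_{L^2(\R)}^2)^{2(p-2)/(6-p)}$ of the $\R$-soliton, I obtain
\[
\EE_{p,\G}(\mu)\le E_p(\varphi_\mu,\G)\le -\frac{6-p}{2(p+2)}\Big(\frac{\mu}{\|Q_p\|_{L^2(\R)}^2}\Big)^{\!\frac{2(p-2)}{6-p}}\mu + o_p(1).
\]
As $p\to 6^-$ the right-hand side diverges to $-\infty$ for every fixed $\mu>\mu_\R:=\|Q_6\|_{L^2(\R)}^2$, while $E_p(u_c(\mu),\G)\to-\mu^3/(6|\G|^2)$ stays bounded. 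Hence, for $p$ in some left neighborhood $[p_\G,6)$ of $6$, $u_c(\mu)$ fails to be a ground state at every $\mu$ slightly larger than $\mu_\R$, and the transition mass
\[
\mu^\star(p):=\sup\{\mu>0 : u_c(\mu)\in\mathfrak{E}_{p,\G}(\mu)\}
\]
satisfies $\mu_0(p)\le\mu^\star(p)<\infty$. By continuity of $\EE_{p,\G}$ and compactness of minimizing sequences on $\G$, $u_c(\mu^\star)\in\mathfrak{E}_{p,\G}(\mu^\star)$.

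It remains to show $\mu^\star(p)\in Z_{p,\G}$. Picking $\mu_n\downarrow\mu^\star$ and $u_n\in\mathfrak{E}_{p,\G}(\mu_n)\setminus\{u_c(\mu_n)\}$, $H^1$-boundedness of minimizing sequences, compactness of the embedding $H^1(\G)\hookrightarrow C^0(\G)$ and elliptic regularity for \eqref{eq:nlse} yield a subsequential limit $u^\star\in\mathfrak{E}_{p,\G}(\mu^\star)$. The main obstacle is to rule out $u^\star=u_c(\mu^\star)$, which would amount to a bifurcation of the non-constant ground state branch from the constant one at $\mu^\star$. I would do this by verifying that, for $p$ close enough to $6$, $\mu^\star(p)$ lies strictly below the first spectral bifurcation mass $\mu_1^{\mathrm{bif}}(p,\G):=|\G|\big(\nu_1(\G)/(p-2)\big)^{2/(p-2)}$, where $\nu_1(\G)>0$ is the first nonzero Kirchhoff eigenvalue of $-\partial_{xx}$ on $\G$; in that regime the implicit function theorem applied to \eqref{eq:nlse} together with the mass constraint forces $u_n=u_c(\mu_n)$ for $n$ large, a contradiction. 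Hence $u^\star\neq u_c(\mu^\star)$; since $u_c(\mu^\star)$ is the only constant positive solution of \eqref{eq:nlse} at $\lambda=(\mu^\star/|\G|)^{(p-2)/2}$, the two distinct ground states $u_c(\mu^\star)$ and $u^\star$ must solve \eqref{eq:nlse} with different Lagrange multipliers, giving $\Lambda^-_{p,\G}(\mu^\star)<\Lambda^+_{p,\G}(\mu^\star)$ and thus $\mu^\star(p)\in Z_{p,\G}$.
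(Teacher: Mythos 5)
Your strategy (locate the transition mass $\mu^\star(p)$ at which the constant ceases to be a ground state, and show the ground state ``jumps'' there) is genuinely different from the paper's, but as written it has a gap at its crux: ruling out $u^\star=u_c(\mu^\star)$. You propose to do this by checking that $\mu^\star(p)$ lies strictly below the first bifurcation mass $\mu_1^{\mathrm{bif}}(p,\G)=|\G|\big(\nu_1(\G)/(p-2)\big)^{2/(p-2)}$, but you give no argument for this, and your soliton comparison cannot supply one: it only yields $\mu^\star(p)\lesssim\mu_\R$ (or $\mu_{\R^+}$ if you use half-solitons at a pendant), while $\mu_1^{\mathrm{bif}}(p,\G)\to|\G|\sqrt{\nu_1(\G)}/2$ as $p\to6^-$, and this quantity can lie well below $\mu_\R$ (e.g.\ dumbbell-type graphs, two loops joined by an edge, have $|\G|\sqrt{\nu_1}$ close to the Friedlander bound $\pi$, so $\mu_1^{\mathrm{bif}}\approx\pi/2<\mu_\R$). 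For such graphs one only knows $\mu^\star\le\mu_1^{\mathrm{bif}}$, and the alternative you must exclude --- that the ground state branch bifurcates continuously from the constant exactly at $\mu^\star=\mu_1^{\mathrm{bif}}$, so that your limits $u_n\to u^\star$ give back $u_c(\mu^\star)$ and no non-uniqueness at all --- is precisely the known scenario for symmetry-breaking on dumbbell graphs. Establishing that a genuine jump (rather than a pitchfork from the constant) occurs at $\mu^\star$ for every compact graph and every $p$ near $6$ is a graph-dependent bifurcation statement of essentially the same difficulty as the theorem itself, so this step cannot be waved through. A secondary, repairable flaw: your last inference is misjustified. That $u_c(\mu^\star)$ is the only \emph{constant} solution at its $\lambda$ says nothing about a non-constant ground state sharing that $\lambda$; the correct argument is that if $u^\star$ and $u_c(\mu^\star)$ had equal mass, equal energy and equal multiplier, then solving the linear system in $(\|u'\|_2^2,\|u\|_p^p)$ given by $E_p$ and $\LL_p$ forces $\|(u^\star)'\|_2=0$, i.e.\ $u^\star$ constant --- so distinct ground states one of which is constant do have distinct multipliers; mere non-uniqueness of ground states does not in general put $\mu$ in $Z_{p,\G}$.

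For comparison, the paper sidesteps the bifurcation analysis entirely: it works with the \emph{action} problem at the critical power $p=6$, shows that the mass of action ground states equals the threshold $\mu_\G$ at some finite $\overline\lambda$ (via the attained critical energy ground state and Proposition \ref{prop:AEgs}) and tends to $\mu_\G$ as $\lambda\to\infty$ (Proposition \ref{prop:est6}), but cannot be identically $\mu_\G$ --- otherwise integrating $\JJ_{6,\G}'=M^\pm_{6,\G}/2$ and comparing with the exponentially sharp asymptotics $\JJ_{6,\G}(\lambda)\approx\JJ_{6,\R}(\lambda)$ would force $\EE_{6,\G}(\mu_\G)=0$, contradicting its strict negativity. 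This gives two values $\lambda_1<\lambda_2$ with $M_{6,\G}^-(\lambda_1)>M_{6,\G}^+(\lambda_2)$, which transfers to $p$ close to $6$ by continuity in $p$ and then contradicts the strict monotonicity of $\Lambda_{p,\G}$ if $Z_{p,\G}=\emptyset$ (Lemma \ref{lem:l1>l2}). If you want to pursue your route, you would need either to prove the strict inequality $\mu^\star(p)<\mu_1^{\mathrm{bif}}(p,\G)$ for all compact graphs, or to handle the bifurcation case separately; at present the proposal does not prove the theorem.
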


Hence, the answer to \eqref{Q} is {\em always} negative on compact graphs. In particular, for every compact graph and every nonlinearity power sufficiently close to 6, there exists at least one value of the mass $\mu$ such that energy ground states in $H_\mu^1(\G)$ are non-unique. Remarkably, this phenomenon does not depend on anything specific of the graph, though its topology and metric will clearly affect the actual value of $p_\G$ and of the mass where non-uniqueness takes place. Theorem \ref{thm:compact} thus unravels an intrinsic feature of normalized ground states on metric graphs: far from being a technical accident, a non-empty $Z_{p,\G}$ is essentially ubiquitous for slightly $L^2$-subcritical exponents. This situation is quite unsuspected, especially if one thinks of very simple graph structures. To give a concrete example, Theorem \ref{thm:compact} entails a genuine (not due to symmetry) non-uniqueness result for normalized ground states on a line segment with homogeneous Neumann conditions at the endpoints, and this marks a strong difference with the problem on the segment with other boundary conditions (compare e.g. with \cite[Theorem 1.5(i)]{NTV}).

We stress that the compactness of the space is by no means necessary, as the phenomenology of Theorem \ref{thm:compact} is expected to occur whenever energy ground states exist for every mass $\mu>0$ and every $L^2$-subcritical power $p\in(2,6)$. 
As anticipated above, beside the line, the half-line, the towers of bubbles, the tadpole graph and the $\mathcal{T}$-graph, the only noncompact graphs with finitely many edges known to always host normalized ground states in the $L^2$-subcritical regime are those of \cite[Section 3]{AST2}, here depicted in Figure \ref{fig:GShalf}. For all these graphs, the following analogous of Theorem \ref{thm:compact} holds. Observe in particular that the tadpole makes no exception, even though it is already known that uniqueness always takes place at $p=4$ and $p=6$.
\begin{theorem}
	\label{thm:noncomp}
	Let $\G$ be a noncompact graph as in Figure \ref{fig-NlT}(\textsc{\tiny A}) or Figure \ref{fig:GShalf}. Then there exists a value $p_\G>2$ (depending on $\G$) such that $Z_{p,\G}\neq\emptyset$ for every $p\in[p_\G,6)$.
\end{theorem}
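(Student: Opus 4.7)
The plan is to adapt the strategy of Theorem \ref{thm:compact} to each of the listed noncompact graphs, exploiting the common feature that each such $\G$ is the union of a compact \emph{core} $\mathcal{K}_\G$ (given by the bounded edges of $\G$) and a finite set of half-lines $\mathcal{H}_\G$. Specifically, $\mathcal{K}_\G$ is the loop for the tadpole, the loop together with the bounded pendant for the signpost, and the union of the bounded pendants for the $2$-fork and $3$-fork. The guiding intuition is that, for $p$ close to 6, the pronounced concentration of solitons makes it possible to exhibit two qualitatively different competing profiles at suitable masses: one with mass collapsing inside $\mathcal{K}_\G$, and one with a nontrivial portion of the mass supported along a half-line.

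First, for each $\G$ I would construct two families of candidate profiles, $u_{p,\mu}^{(\mathcal{K})}$ and $u_{p,\mu}^{(\mathcal{H})}$: the former a rescaled soliton placed in the compact core and adjusted to satisfy Kirchhoff at the junction vertices, the latter a half-soliton placed along a half-line with Kirchhoff matching at the attachment vertex. Sharp energy expansions of $E_p(\cdot,\G)$ as $p\to 6^-$, in the spirit of those underlying Theorem \ref{thm:compact}, should show that the two corresponding energy curves cross as $\mu$ varies, isolating a bifurcation mass $\mu^*=\mu^*(p)$ at which both families approximate $\EE_{p,\G}(\mu^*)$. Next, arguing by contradiction, suppose $Z_{p,\G}=\emptyset$ on an interval $(6-\eps,6)$. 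Then, by the monotonicity of $\Lambda_{p,\G}^\pm$ from \cite[Theorem 2.5]{DST20}, these two quantities would coincide and depend continuously on $\mu$. Combined with the distinct limiting values of $\LL_p$ carried by the two candidate families in the construction above, this continuity would be violated, yielding the desired contradiction.

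The main obstacle will be showing that the two candidate profiles actually lift to genuine minimizers in $\mathfrak{E}_{p,\G}(\mu^*)$, rather than merely energy-competitive test functions. This requires a concentration-compactness analysis tailored to each graph, ruling out mass loss at infinity along the half-lines and then invoking the existence results of \cite[Section 3]{AST2} for the graphs of Figure \ref{fig:GShalf}, and analogous results for the tadpole. Particular care is needed in the tadpole case, since uniqueness is already known at $p=4$ \cite[Remark 1.3]{KMPX} and at $p=6$ \cite[Remark 1]{NP}: accordingly, $p_\G$ must be produced in $(4,6)$ and the bifurcation mass must be shown to remain in a compact subset of $(0,\infty)$ as $p\to 6^-$, so that the obstruction detected in the bulk of the interval does not disappear in the limit.
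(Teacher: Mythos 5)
Your outline has a genuine gap at its central step: you never connect the behaviour of your two trial-function families to the actual minimizers. An energy expansion of $u_{p,\mu}^{(\mathcal{K})}$ and $u_{p,\mu}^{(\mathcal{H})}$ only provides upper bounds for $\EE_{p,\G}(\mu)$, and the fact that these upper bounds cross at some $\mu^*(p)$ says nothing about the Lagrange multipliers $\LL_p(u,\G)$ of genuine ground states at or near $\mu^*$; the ``distinct limiting values of $\LL_p$ carried by the two candidate families'' is a property of your test functions, not of elements of $\mathfrak{E}_{p,\G}(\mu^*)$. Moreover the contradiction scheme is muddled: if you really had two ground states of the same mass with different $\LL_p$, that mass would belong to $Z_{p,\G}$ by definition, with no appeal to continuity of $\Lambda_{p,\G}$ needed; conversely, assuming $Z_{p,\G}=\emptyset$ and invoking continuity of $\Lambda_{p,\G}$ cannot be contradicted by data about test functions. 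To make your crossing idea work you would need (a) a proof that the two energy curves actually cross (you only assert that sharp expansions ``should'' show it — and for the $2$- and $3$-fork graphs this is far from clear, since at $p=6$ the ground state level is never attained and both the concentrated and the spread-out regimes carry the \emph{same} limiting mass $\mu_{\R^+}$, so the relevant non-uniqueness is not visible from a naive core-versus-half-line dichotomy), and (b) a compactness argument showing that ground states for masses on either side of $\mu^*$ are of the two different types with $\lambda$-values in separated ranges, so that limits from both sides produce two distinct ground states at $\mu^*$. Neither ingredient is supplied, and (b) is precisely the kind of fine information that requires the machinery you are bypassing.

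For comparison, the paper does not work with energy trial functions at all: it runs the compact-graph scheme through the \emph{action} problem at $p=6$. The key is property \eqref{P}, i.e. the existence of $\lambda_1<\lambda_2$ with $M_{6,\G}^-(\lambda_1)>M_{6,\G}^+(\lambda_2)$, which is then transferred to $p\in[p_\G,6)$ via the continuity in $p$ of $\JJ_{p,\G}(\lambda)$ (Remark \ref{rem:comp}), Proposition \ref{prop:AEgs} and Lemma \ref{lem:l1>l2}. For the tadpole and the signpost, \eqref{P} follows exactly as on compact graphs, because an $L^2$-critical energy ground state with mass $\mu_\R$ and strictly negative energy exists, so $M_{6,\G}^\pm(\overline\lambda)=\mu_\R$ at $\overline\lambda=\LL_6(\overline u,\G)$ while $M_{6,\G}^\pm(\lambda)\to\mu_\R$ as $\lambda\to\infty$, and constancy would force $\EE_{6,\G}(\mu_\R)=0$. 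For the forks this route is unavailable (critical ground states never exist), and the paper instead proves Lemma \ref{lem:pend} ($M_{6,\G}^\pm(\lambda)\to\mu_{\R^+}$ as $\lambda\to0^+$), combines it with Proposition \ref{prop:est6}(ii), and rules out $M_{6,\G}^\pm\equiv\mu_{\R^+}$ through the strict inequality $\JJ_{6,\G}(\lambda)>\JJ_{6,\R^+}(\lambda)$, obtained by rearrangement. Your proposal does not distinguish these two cases and contains no substitute for either mechanism, so as it stands it is a heuristic program rather than a proof. Finally, your worry about uniqueness at $p=4$ and $p=6$ on the tadpole is tangential: the statement only concerns $p\in[p_\G,6)$, so known uniqueness at the endpoints imposes no constraint beyond $p_\G>4$, which the theorem already allows.
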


In the case of infinitely many edges with uniformly bounded length, the only graphs where we always have $L^2$-subcritical ground states are the $\Z$-periodic ones. Since this family of graphs is rather wide, for the sake of simplicity we limit ourselves to consider a prototypical example, the {\em ladder} graph (Figure \ref{fig:Zper}).

\begin{theorem}
	\label{thm:per}
	Let $\G$ be a ladder graph (Figure \ref{fig:Zper}). Then there exists a value $p_\G>2$ (depending on $\G$) such that $Z_{p,\G}\neq\emptyset$ for every $p\in[p_\G,6)$.
\end{theorem}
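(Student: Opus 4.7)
The $\Z$-periodic structure of the ladder $\G$ prevents direct application of Theorem \ref{thm:compact}, so the plan is to mimic its philosophy by constructing two competing branches of normalized ground-state candidates that dominate $\EE_{p,\G}(\mu)$ in distinct mass regimes. A crossing mass between the two branches then belongs to $Z_{p,\G}$ by the continuity of $\Lambda_{p,\G}^\pm$ from \cite[Theorem 2.5]{DST20}.

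The first branch is the line soliton transplanted on one of the horizontal rails of $\G$ (since $\G$ contains isometric copies of $\R$), yielding the bound $\EE_{p,\G}(\mu)\le\EE_{p,\R}(\mu)$ together with a Lagrange multiplier $\lambda_\R(\mu,p)\sim\mu^{2(p-2)/(6-p)}$ that becomes increasingly singular as $\mu$ approaches the $L^2$-critical line mass $\mu_6^\R$ in the regime $p\to 6^-$. The second branch is a \emph{periodic competitor}: apply Theorem \ref{thm:compact} to a large compact truncation $\G_N\subset\G$ (with Neumann conditions at the boundary vertices) to obtain, for $p$ close to $6$, two non-unique ground states at some mass $\mu_N$; then pass to the limit $N\to\infty$ via a $\Z$-periodic concentration-compactness analysis (in the spirit of \cite{D19}) to lift the pair to $\G$. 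An asymptotic analysis at $p\to 6^-$ should then show that the line-soliton branch is energetically optimal in a small-mass regime, while the periodic branch takes over as $\mu$ grows, forcing at least one crossing.

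The hard part will be twofold: (i) running the concentration-compactness scheme on $\Z$-periodic graphs in a way that factors out the $\Z$-translation invariance of $\G$ while preserving the structural distinction between the two branches; and (ii) showing that the critical exponent $p_{\G_N}$ coming from Theorem \ref{thm:compact} applied to the truncations is bounded away from $6$ uniformly in $N$, and that the resulting masses $\mu_N$ and the gap between the two Lagrange multipliers do not degenerate in the limit. I expect (ii) to be the principal obstacle, to be attacked by comparing sharp Gagliardo-Nirenberg constants on $\R$ and on $\G$, which should differ by a strictly positive amount reflecting the presence of the rungs and providing the quantitative control needed throughout the limit.
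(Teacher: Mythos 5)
Your proposal does not reach the paper's mechanism and has two concrete gaps that would make it fail. First, the truncation-plus-limit step has no driving force: the non-uniqueness supplied by Theorem \ref{thm:compact} on a compact truncation $\G_N$ rests on the strict negativity of the critical level $\EE_{6,\G_N}(\mu_\G)$ and on the existence of a critical ground state \emph{at} the threshold mass, and both ingredients degenerate as $N\to\infty$, because on the ladder $\EE_{6,\G}(\mu)=0$ for all $\mu\le\mu_\R$ and is never attained. So there is no reason the exponents $p_{\G_N}$ stay away from $6$, nor that the masses $\mu_N$ or the multiplier gaps survive the limit; non-uniqueness is not stable under exhaustion, and a $\Z$-periodic concentration-compactness argument cannot by itself prevent the two distinct minimizers on $\G_N$ from collapsing onto the same limit (or translating away). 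Second, a ``crossing of two energy branches'' does not produce an element of $Z_{p,\G}$: membership in $Z_{p,\G}$ means two ground states \emph{at the same mass} with different multipliers, and comparing upper bounds for $\EE_{p,\G}(\mu)$ coming from two families of competitors gives no information of that kind (also, $\Lambda^\pm_{p,\G}$ are only known to be continuous off a countable set, which is exactly the issue at stake). Moreover your ``first branch'' is not admissible as stated: a soliton supported on one rail that is nonzero at the vertices is not continuous on $\G$, hence not in $H^1(\G)$; and since every point of the ladder is joined to infinity by two disjoint paths, rearrangement gives $\EE_{p,\G}(\mu)\ge\EE_{p,\R}(\mu)$, the opposite of the inequality you invoke. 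In the small-mass (equivalently small-$\lambda$) regime ground states on the ladder spread over all three families of edges and are \emph{not} close to a line soliton on one rail.

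The paper's route is different and supplies precisely the quantitative input your sketch lacks. Everything is run on the \emph{action} problem: one extends to the ladder the continuity in $p$ of $\JJ_{p,\G}(\lambda)$ (Lemma \ref{lem:contp}, which requires real work because of the missing compactness), keeps Lemma \ref{lem:l1>l2} and Proposition \ref{prop:AEgs}, and then verifies property \eqref{P} at $p=6$ by two asymptotic computations: $M^\pm_{6,\G}(\lambda)\to\mu_\R$ as $\lambda\to\infty$ (Proposition \ref{prop:est6}, ground states concentrate on a single edge and converge to the soliton), while $M^\pm_{6,\G}(\lambda)\to\sqrt6\,\mu_\R$ as $\lambda\to0^+$ (Lemma \ref{lem:GtoR}: after rescaling, the three parallel edge families collapse onto an effective one-dimensional problem with modified coefficients whose minimizer has mass $\sqrt6\,\mu_\R>\mu_\R$). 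The strict drop of the ground-state mass between small and large $\lambda$ is the source of non-uniqueness; nothing in your outline identifies this discrepancy, and without it the ``crossing'' you hope to force is not there to be found. If you want to salvage your plan, replace the truncation argument by a direct study of the critical action problem at the two ends $\lambda\to0^+$ and $\lambda\to\infty$ on $\G$ itself.
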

Notice that Theorem \ref{thm:per} has nothing to do with the invariance of the graph under the action of the symmetry group $\Z$ (which is of course a source of non-uniqueness), but it unravels again a genuine non-uniqueness phenomenon for ground states solving \eqref{eq:nlse} with different values of $\lambda$.

\begin{figure}[t]
	\centering
	\includegraphics[width=0.4\columnwidth]{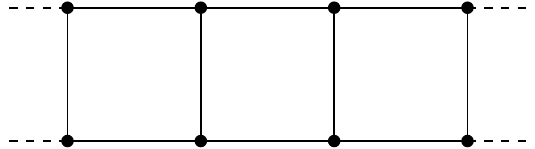}
	\caption{A prototypical example of $\Z$-periodic graph: the ladder graph.}
	\label{fig:Zper}
\end{figure}
\smallskip
Since Theorems \ref{thm:compact}--\ref{thm:noncomp}--\ref{thm:per} may appear surprising, let us conclude this introduction trying to provide a technical motivation for the occurrence of this phenomenology on graphs.

To this end, recall first that there is at least another notion of ground state associated to the NLS equation \eqref{eq:nlse}. Indeed, for any fixed $\lambda\in\R$, an {\em action ground state} of \eqref{eq:nlse} is a function $u$ such that
\begin{equation}
\label{eq:infA}
J_{p,\lambda}(u,\G)=\JJ_{p,\G}(\lambda)=:\inf_{v\in\NN_{p,\lambda}(\G)}J_{p,\lambda}(v,\G)\,,
\end{equation}
where the action functional $J_{p,\lambda}:H^1(\G)\to\R$ is given by
\begin{equation}
\label{action}
J_{p,\lambda}(v,\G):=\frac12\|v'\|_{L^2(\G)}^2+\frac{\lambda}{2}\|v\|_{L^2(\G)}^2-\frac1p\|v\|_{L^p(\G)}^p\,,
\end{equation}
$\NN_{p,\lambda}(\G)$ is the associated Nehari manifold 
\begin{equation}
	\label{nehari}
	\begin{split}
	\NN_{p,\lambda}(\G):=&\,\left\{v\in H^1(\G)\setminus\left\{0\right\}\,:\,J_{p,\lambda}'(v,\G)v=0\right\}\\
	=&\,\left\{v\in H^1(\G)\setminus\left\{0\right\}\,:\,\|v'\|_{L^2(\G)}^2+\lambda\|v\|_{L^2(\G)}^2=\|v\|_{L^p(\G)}^p\right\}
	\end{split}
\end{equation}
and $\JJ_{p,\G}$ is the action ground state level. It is standard knowledge that the Nehari manifold is a natural constraint for the action functional, i.e. critical points of $J_{p,\lambda}$ constrained to $\NN_{p,\lambda}(\G)$ are actually unconstrained critical points of $J_{p,\lambda}$, and thus solve \eqref{eq:nlse}. However, even though both ground states provide (up to sign) positive solutions to the same NLS equations, general investigations of the relation between the energy and the action approach started only recently in \cite{DST23, JL}. Clearly, contrary to energy ground states, nothing is known a priori on the mass of action ground states, and in principle different action ground states with the same $\lambda$ may have different masses. Hence, denoting by
\begin{equation}
	\label{eq:Y}
\mathfrak{A}_{p,\G}(\lambda):=\left\{u\in\NN_{p,\lambda}(\G)\,:\, J_{p,\lambda}(u,\G)=\JJ_{p,\G}(\lambda)\right\}
\end{equation}
the set of action ground states in $\NN_{p,\lambda}(\G)$, we set
\begin{equation}
\label{eq:Mpm}
M_{p,\G}^-(\lambda):=\inf_{u\in \mathfrak{A}_{p,\G}(\lambda)}\|u\|_{L^2(\G)}^2,\qquad M_{p,\G}^+(\lambda):=\sup_{u\in \mathfrak{A}_{p,\G}(\lambda)}\|u\|_{L^2(\G)}^2\,.
\end{equation}
The key-point in the proof of Theorems \ref{thm:compact}--\ref{thm:noncomp}--\ref{thm:per} will be the validity of the following property:
\begin{equation}
\label{P}
\text{there exist }\lambda_1<\lambda_2\text{ such that } M_{6,\G}^-(\lambda_1)>M_{6,\G}^+(\lambda_2)\,.\tag{P}
\end{equation}
As the details in the next sections will show, there are slightly different technical reasons for which \eqref{P} holds on the various graphs covered by our theorems, but they all are peculiar of the behaviour of the ground state problems at the $L^2$-critical power $p=6$ on metric graphs. To some extent, this explains somehow why the non-uniqueness of normalized ground states described here is ubiquitous on graphs, whilst we do not expect to observe it in other contexts (see e.g. Remark \ref{rem:Dircomp} below).

We also note that, as a by-product of our analysis, we obtain the following.
\begin{theorem}
	\label{thm:AnoE}
	For every graph $\G$ covered by Theorems \ref{thm:compact}--\ref{thm:noncomp}--\ref{thm:per} and every $p\in[p_\G,6)$, there exist action ground states that are not energy ground states with their mass.
\end{theorem}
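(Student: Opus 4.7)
My plan is a short contradiction argument, combining the $p<6$ counterpart of \eqref{P} that emerges from the proofs of Theorems \ref{thm:compact}--\ref{thm:noncomp}--\ref{thm:per} with the strict monotonicity and countability properties of $\Lambda_{p,\G}^\pm$ and $Z_{p,\G}$ recalled from \cite[Theorem 2.5]{DST20}.

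Fix $\G$ and $p\in[p_\G,6)$ as in the statement. The first step is to record the existence of $\lambda_1<\lambda_2$ such that
\begin{equation}
\label{eq:Pp}
M_{p,\G}^-(\lambda_1) > M_{p,\G}^+(\lambda_2),
\end{equation}
which is the natural subcritical counterpart of \eqref{P} and is delivered by the same perturbation/continuity analysis transporting \eqref{P} from $p=6$ to nearby subcritical exponents in the proofs of the main theorems.

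Next, I would argue by contradiction, assuming that every action ground state $u\in\mathfrak{A}_{p,\G}(\lambda)$ is also an energy ground state at mass $\|u\|_{L^2(\G)}^2$, for every $\lambda$ with $\mathfrak{A}_{p,\G}(\lambda)\neq\emptyset$. Since any $u\in\mathfrak{A}_{p,\G}(\lambda)$ solves \eqref{eq:nlse} with $\LL_p(u,\G)=\lambda$, its mass $m:=\|u\|_{L^2(\G)}^2$ must then satisfy $\Lambda_{p,\G}^-(m)\leq\lambda\leq\Lambda_{p,\G}^+(m)$. Introducing
\[
\mu^-(\lambda):=\inf\left\{m>0\,:\,\Lambda_{p,\G}^+(m)\geq\lambda\right\},\quad \mu^+(\lambda):=\sup\left\{m>0\,:\,\Lambda_{p,\G}^-(m)\leq\lambda\right\},
\]
which are well defined by the strict monotonicity of $\Lambda_{p,\G}^\pm$, the assumption gives $M_{p,\G}^-(\lambda_1)\leq\mu^+(\lambda_1)$ and $M_{p,\G}^+(\lambda_2)\geq\mu^-(\lambda_2)$, and combining with \eqref{eq:Pp} yields
\[
\mu^-(\lambda_2)\leq M_{p,\G}^+(\lambda_2) < M_{p,\G}^-(\lambda_1)\leq\mu^+(\lambda_1).
\]

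For any $m$ in the non-empty open interval $(\mu^-(\lambda_2),\mu^+(\lambda_1))$, the strict monotonicity of $\Lambda_{p,\G}^\pm$ forces $\Lambda_{p,\G}^-(m)\leq\lambda_1<\lambda_2\leq\Lambda_{p,\G}^+(m)$, whence $m\in Z_{p,\G}$. This would make $Z_{p,\G}$ uncountable, contradicting \cite[Theorem 2.5]{DST20}, and hence there must exist $\lambda$ and $u\in\mathfrak{A}_{p,\G}(\lambda)$ with $u\notin\mathfrak{E}_{p,\G}(\|u\|_{L^2(\G)}^2)$, as desired. The main obstacle here is \eqref{eq:Pp}: once it is in hand the remaining deduction is essentially automatic, but \eqref{eq:Pp} itself is the technical core underlying the three main theorems of the paper, and therefore requires no further work beyond what they already need.
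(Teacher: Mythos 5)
Your proposal is correct, but it takes a route that is genuinely different from the paper's. The paper dispatches Theorem \ref{thm:AnoE} in one sentence, as a consequence of the statements of Theorems \ref{thm:compact}--\ref{thm:noncomp}--\ref{thm:per} together with \cite[Theorem 1.3]{DST23}, i.e. Proposition \ref{prop:AEgs}: the rigidity fact that once an energy ground state with mass $\mu$ and multiplier $\lambda$ exists, every action ground state at that $\lambda$ has mass $\mu$ and is itself an energy ground state, combined with the non-uniqueness of the multiplier at some mass in $Z_{p,\G}$. You never invoke Proposition \ref{prop:AEgs}; instead you re-enter the proofs of the main theorems to extract the subcritical counterpart of \eqref{P} --- which is indeed available with no extra work, being exactly \eqref{eq:absM} (and its analogues in Sections \ref{sec:Ghalf}--\ref{sec:Zper}), valid for every $p\in(6-\eps,6)\supseteq[p_\G,6)$ with the same $\lambda_1<\lambda_2$ --- and you then convert the negation of the statement into the inclusion of a whole nondegenerate interval of masses in $Z_{p,\G}$, contradicting the countability from \cite[Theorem 2.5]{DST20}. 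The trade-off is clear: your argument is independent of \cite[Theorem 1.3]{DST23}, needing only that a function on $\NN_{p,\lambda}(\G)$ solving \eqref{eq:nlse} has multiplier $\lambda$, the (separate) monotonicity of $\Lambda_{p,\G}^-$ and $\Lambda_{p,\G}^+$, and the countability of $Z_{p,\G}$; on the other hand it rests on the intermediate inequality \eqref{eq:absM} rather than on the theorem statements alone, and the finish is slightly roundabout: once $M_{p,\G}^-(\lambda_1)>M_{p,\G}^+(\lambda_2)$ is in hand, one can also conclude in two lines by taking one action ground state at each $\lambda_i$ and noting that, were both energy ground states with masses $m_1>m_2$, the ordering of the multiplier intervals established in \cite{DST20} (namely $\Lambda_{p,\G}^+(m_2)\leq\Lambda_{p,\G}^-(m_1)$) would force $\lambda_2\leq\lambda_1$ --- no uncountability needed; your version avoids relying on that interval ordering, which is a fair alternative. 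Two details you should state explicitly: the sets defining $\mu^\pm(\lambda_i)$ are nonempty precisely because the contradiction hypothesis is applied to the nonempty sets $\mathfrak{A}_{p,\G}(\lambda_i)$ (nonempty since $\lambda_1,\lambda_2>0$, by Proposition \ref{prop:Acomp} and its noncompact analogues, and since energy ground states exist at every mass for all graphs considered so that $\Lambda_{p,\G}^\pm$ are defined on all of $\R^+$), and $\mu^+(\lambda_1)$ may equal $+\infty$, which is harmless for the argument.
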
   
The proof of Theorem \ref{thm:AnoE} is a straightforward consequence of Theorems \ref{thm:compact}--\ref{thm:noncomp}--\ref{thm:per} and \cite[Theorem 1.3]{DST23}. This result has to be compared with \cite[Theorem 1.3]{DST23}, where it is proved that, on the contrary, energy ground states are always also action ground states. Hence, Theorem \ref{thm:AnoE} shows that, essentially on every metric graph, the action and the energy ground state problems are not equivalent.

Finally, let us say clearly that the idea of exploiting \eqref{P} to obtain normalized ground states with the same mass but different $\lambda$ for $p$ close to 6 was used for the first time on $\mathcal{T}$-graphs in the proof of \cite[Theorem 1.5]{ACT}, and indeed the phenomenology identified by that theorem and Theorems  \ref{thm:compact}--\ref{thm:noncomp}--\ref{thm:per} here is the same. However, as we already said, \cite{ACT} makes unavoidable use of thorough explicit computations that cannot work outside $\mathcal{T}$-graphs. Hence, to handle the level of generality we seek, we need to develop different arguments, as much independent as possible of any specific properties of the graph at hand. In particular, our proofs are based on fine asymptotic analyses of the action ground state problem at the $L^2$-critical power (see Proposition \ref{prop:est6} and Lemmas \ref{lem:pend}--\ref{lem:GtoR} below), together with a certain amount of continuity of the quantities under exam (whose derivation e.g. on the ladder graph as in Lemma \ref{lem:contp} below will require a rather detailed discussion).

\smallskip
The remainder of the paper is organized as follows. To ease the presentation and improve the readability, we first develop in full details the proof of our non-uniqueness results on compact graphs. For this reason, all the statements in Sections \ref{sec:prel}--\ref{sec:est6}--\ref{sec:comp} are given for compact graphs only: Section \ref{sec:prel} recalls some preliminaries about energy and action ground states, Section \ref{sec:est6} derives general asymptotic estimates for $L^2$-critical ground states, and Section \ref{sec:comp} completes the proof of Theorem \ref{thm:compact}. Sections \ref{sec:Ghalf}--\ref{sec:Zper} are then devoted to noncompact graphs with finitely many edges (Theorem \ref{thm:noncomp}) and $\Z$-periodic graphs (Theorem \ref{thm:per}) respectively. 

\medskip
{\bf Notation.} In what follows, whenever possible we use shorther notations like $\|u\|_p$ to denote $L^p$ norms, resorting to the complete ones only if necessary to avoid ambiguity.

\section{Preliminaries on action and energy ground states}
\label{sec:prel}

We begin here by recalling some preliminary results on action and energy ground states. As anticipated before, even though most of the properties listed below holds true in more general settings than that of compact graphs, for the moment we state everything only in this context to keep the focus on the proof of Theorem \ref{thm:compact}.

Let us first summarize what is known on the energy ground state problem.
\begin{proposition}
	\label{prop:Ecomp}
	Let $\G$ be a compact graph, $p\in(2,6]$ and $\EE_{p,\G}:[0,\infty)\to\R$ be the energy ground state level defined in \eqref{eq:infE}. There results:
	\begin{itemize}
		\item[(i)] if $p\in(2,6)$, then $\EE_{p,\G}$ is strictly negative and attained for every $\mu>0$;
		\item[(ii)] if $p=6$, then
		\[
		\EE_{6,\G}(\mu)\begin{cases}
		<0 & \text{if }\mu\leq\mu_\G\\
		=-\infty & \text{if }\mu>\mu_\G\,,
		\end{cases}
		\]
		and $\EE_{6,\G}$ is attained if and only if $\mu\leq\mu_\G$, with 
		\[
		\mu_\G=\begin{cases}
		\frac{\sqrt{3}}4\pi=:\mu_{\R^+} & \text{if $\G$ has a pendant}\\
		\frac{\sqrt{3}}2\pi=:\mu_\R & \text{if $\G$ has no pendant}\,.
		\end{cases}
		\]
	\end{itemize}
	Let then 
	\[
	I_{p,\G}:=\begin{cases}
	[0,\infty) & \text{for }p\in(2,6)\\
	[0,\mu_\G] & \text{for }p=6 
	\end{cases}
	\]
	and $\Lambda_{p,\G}^\pm$ be as in \eqref{eq:L+-}. Then $\Lambda_{p,\G}^\pm$ are attained for every $p\in(2,6]$ and every $\mu\in I_{p,\G}$, and there exists an at most countable set $Z_{p,\G}\subset I_{p,\G}$ such that
	\[
	\Lambda_{p,\G}^-(\mu)=\Lambda_{p,\G}^+(\mu)\qquad\forall\mu\in I_{p,\G}\setminus Z_{p,\G}\,.
	\]
	Moreover, the map $\Lambda_{p,\G}:I_{p,\G}\setminus Z_{p,\G}\to \R^+$, defined as $\Lambda_{p,\G}(\mu):=\Lambda_{p,\G}^\pm(\mu)$, is strictly increasing and
	\begin{equation}
		\label{eq:lim}
	\begin{split}
	\displaystyle\lim_{\mu\to0}\Lambda_{p,\G}(\mu)=0 & \qquad\forall p\in(2,6]\\
	\displaystyle\lim_{\mu\to\infty}\Lambda_{p,\G}(\mu)=\infty & \qquad\forall p\in(2,6)\,.
	\end{split}
	\end{equation}
\end{proposition}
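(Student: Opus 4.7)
Parts (i) and (ii) are essentially classical. For $p\in(2,6)$, existence at every $\mu>0$ is \cite[Theorem~1.1]{D18}, and strict negativity of $\EE_{p,\G}(\mu)$ follows by testing $E_p(\cdot,\G)$ against the constant $u\equiv\sqrt{\mu/|\G|}$, which yields $E_p(u,\G)=-\mu^{p/2}/(p|\G|^{p/2-1})<0$. The $L^2$-critical dichotomy $\mu_\G\in\{\mu_{\R^+},\mu_\R\}$ reflects the sharp Gagliardo--Nirenberg inequality on compact graphs: its sharp constant equals the one of the half-line if $\G$ has a pendant and the one of the line otherwise, and from this the values of $\mu_\G$ as well as the non-attainment of $\EE_{6,\G}$ above the threshold are deduced in the standard way.

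The existence of the at most countable exceptional set $Z_{p,\G}$, the strict monotonicity of $\Lambda_{p,\G}^\pm$ and the equality $\Lambda_{p,\G}^-=\Lambda_{p,\G}^+$ outside it are a direct application of \cite[Theorem~2.5]{DST20}, which is formulated on arbitrary metric graphs. Attainment of $\Lambda_{p,\G}^\pm$ at $\mu\in I_{p,\G}$ then follows from compactness of $\mathfrak{E}_{p,\G}(\mu)$: any sequence in this set is bounded in $H^1(\G)$ by the a priori bounds underlying the existence argument of \cite{D18}, so the compact embedding $H^1(\G)\hookrightarrow L^p(\G)$ provides a subsequence converging weakly in $H^1$ and strongly in $L^p$ to a limit $u$ with $\|u\|_2^2=\mu$; weak lower semicontinuity combined with the minimality $E_p(u,\G)\le\EE_{p,\G}(\mu)$ force the $H^1$-convergence to be strong, whence $\LL_p(\cdot,\G)$ is continuous along the sequence and $u\in\mathfrak{E}_{p,\G}(\mu)$ realizes $\Lambda_{p,\G}^\pm(\mu)$.

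It remains to identify the limits in \eqref{eq:lim}; monotonicity already ensures their existence in $[0,\infty]$. Here I would use the Pohozaev--type identity $\lambda\mu=\|u\|_p^p-\|u'\|_2^2$ (obtained by multiplying \eqref{eq:nlse} by $u$, integrating on each edge, and invoking Kirchhoff at vertices) together with the rewriting $E_p(u,\G)=(\tfrac12-\tfrac1p)\|u\|_p^p-\frac{\lambda\mu}{2}$ of the energy. As $\mu\to\infty$ with $p\in(2,6)$, the constant competitor gives $\EE_{p,\G}(\mu)\le -C\mu^{p/2}$, hence $\lambda\mu/2\ge-\EE_{p,\G}(\mu)\ge C\mu^{p/2}$ and thus $\lambda\ge C\mu^{(p-2)/2}\to\infty$. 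As $\mu\to 0$, the identity $\|u'\|_2^2=2\EE_{p,\G}(\mu)+\frac{2}{p}\|u\|_p^p\le\frac{2}{p}\|u\|_p^p$ fed into the Gagliardo--Nirenberg inequality $\|u\|_p^p\le C(\|u'\|_2+\sqrt{\mu})^{p-2}\mu$ yields $\|u\|_p^p\to 0$, whence $\lambda\mu=\|u\|_p^p-\|u'\|_2^2$ forces $\lambda\to 0$ (the decay rate is easy to make explicit separately in each range $p\in(2,4]$ and $p\in(4,6)$). I expect no real obstacle in these final estimates, which rely only on interpolation and scaling; the substantive content of the proposition is entirely contained in the citations to \cite{D18,DST20}.
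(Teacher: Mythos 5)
Most of your proposal coincides with the paper's own proof: points (i)--(ii) are delegated to \cite{D18}, the countable set $Z_{p,\G}$, the strict monotonicity and the equality $\Lambda_{p,\G}^-=\Lambda_{p,\G}^+$ outside $Z_{p,\G}$ come from the argument of \cite[Theorem 2.5]{DST20} (note that this theorem is stated for noncompact graphs with finitely many edges, so one repeats its proof verbatim rather than quoting the statement, as the paper does), and your $\mu\to\infty$ estimate is exactly the paper's: the identity $\Lambda_{p,\G}(\mu)=\big(1-\tfrac2p\big)\|u\|_p^p/\mu-2\EE_{p,\G}(\mu)/\mu$ combined with the constant competitor. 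Your self-contained compactness argument for the attainment of $\Lambda_{p,\G}^\pm$ is a reasonable alternative, but note it presupposes that $\mathfrak{E}_{p,\G}(\mu)$ is bounded in $H^1(\G)$, which at $p=6$, $\mu=\mu_\G$ is not immediate from the critical Gagliardo--Nirenberg inequality (at the threshold mass the gradient term cancels); the paper avoids this by taking attainment too from the argument of \cite[Theorem 2.5]{DST20}.

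The genuine gap is in the limit $\mu\to0$. Writing $t=\|u'\|_2$, your chain $t^2\le\tfrac2p\|u\|_p^p\le C(t+\sqrt\mu)^{p-2}\mu$ forces $t$ (hence $\|u\|_p^p$ and $\lambda$) to be small only when $p-2\le2$: for $p\in(4,6]$ the inequality $t^2\le C(t+\sqrt\mu)^{p-2}\mu$ is also satisfied along a large-gradient branch $t\gtrsim\mu^{-1/(p-4)}$ (and at $p=6$ the exponent is critical), so as written it does not exclude ground states with large gradient at small mass, and your conclusion $\lambda\to0$ does not follow. Your parenthetical defers precisely this range and, moreover, omits $p=6$, which is part of \eqref{eq:lim}. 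The step can be repaired by using the sharper two-term interpolation on compact graphs, $\|u\|_\infty^2\le\|u\|_2^2/|\G|+2\|u\|_2\|u'\|_2$, which gives $\|u\|_p^p\le C\big(\mu^{\frac{p+2}4}t^{\frac{p-2}2}+\mu^{\frac p2}\big)$: the gradient exponent is then $<2$ for $p<6$ and equal to $2$ with prefactor $O(\mu^2)$ for $p=6$, so $t\to0$ and $0<\lambda\le\|u\|_p^p/\mu\to0$ in all cases. The paper's route is different and avoids estimates altogether: by \cite[Theorem 2.2]{CDS} the constant $\kappa_\mu=\sqrt{\mu/|\G|}$ is itself a ground state for all small $\mu$ and every $p\in(2,6]$, whence for $\mu\notin Z_{p,\G}$ one has $\Lambda_{p,\G}(\mu)=\LL_p(\kappa_\mu,\G)=(\mu/|\G|)^{\frac p2-1}\to0$. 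Either fix is fine, but as it stands your argument for $\lim_{\mu\to0}\Lambda_{p,\G}(\mu)=0$ is incomplete on $(4,6]$.
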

\begin{proof}
	Points (i)--(ii) are the content of \cite[Theorems 1.1--1.2]{D18}, whereas that $\Lambda_{p,\G}^\pm$ are attained, the properties of the set $Z_{p,\G}$ and the monotonicity of the map $\Lambda_{p,\G}$ can be proved following verbatim the argument in the proof of \cite[Theorem 2.5]{DST20} (where the same results are obtained for noncompact graphs with finitely many edges). Finally, to see that $\Lambda_{p,\G}(\mu)\to0$ as $\mu\to0$ and $\Lambda_{p,\G}(\mu)\to\infty$ as $\mu\to\infty$, recall the identity
	\begin{equation}
	\label{eq:LE}
	\Lambda_{p,\G}(\mu)=\left(1-\frac2p\right)\frac{\|u\|_p^p}\mu-\frac{2\EE_{p,\G}(\mu)}\mu\,,
	\end{equation}
	where $u\in H_\mu^1(\G)$ is any ground state of $E_p(\cdot, \G)$ with mass $\mu$. Since the constant function $\displaystyle\kappa_\mu:=\sqrt{\mu/|\G|}$ (where $|\G|$ denotes the total length of $\G$) always belongs to $H_\mu^1(\G)$, we have $\displaystyle\EE_{p,\G}(\mu)\leq E_p(\kappa_\mu,\G)=-\mu^{\frac p2}/\left(p|\G|^{\frac p2-1}\right)$. As $p>2$, directly plugging into \eqref{eq:LE} shows that $\Lambda_{p,\G}(\mu)\to\infty$ when $\mu\to\infty$, and the same can be done in the case $\mu\to0$, recalling that by \cite[Theorem 2.2]{CDS} $\kappa_\mu$ is a ground state of $E_p(\cdot,\G)$ as soon as $\mu$ is small enough.
\end{proof}

An analogous characterization can be given for action ground states. Before it, let us introduce the notation (frequently used in the following)
\begin{equation}
\label{eq:sigma}
\sigma_{p,\lambda}(u):=\left(\frac{\|u'\|_2^2+\lambda\|u\|_2^2}{\|u\|_p^p}\right)^{\frac1{p-2}}
\end{equation}
for any given $p\in(2,\infty)$, $\lambda\in\R$ and $u\in H^1(\G)$ for which $\|u'\|_2^2+\lambda\|u\|_2^2>0$. Observe that, by definition, $\sigma_{p,\lambda}(u)u\in\NN_{p,\lambda}(\G)$.
\begin{proposition}
	\label{prop:Acomp}
	Let $\G$ be a compact graph, $p>2$ and $\JJ_{p,\G}:\R\to\R$ be the action ground state level defined in \eqref{eq:infA}. Then
	\begin{equation}
	\label{eq:levJ}
	\JJ_{p,\G}(\lambda)\begin{cases}
	>0 & \text{if }\lambda>0\\
	=0 & \text{if }\lambda\leq0\,,
	\end{cases}
	\end{equation}
	$\JJ_{p,\G}$ is continuous on $\R$, strictly increasing on $\R^+$, and it is attained if and only if $\lambda>0$. Let then $M_{p,\G}^\pm$ be as in \eqref{eq:Mpm}. Then $M_{p,\G}^\pm$ are attained for every $\lambda>0$ and there exists an at most countable set $\widetilde{Z}_{p,\G}\subset\R^+$ such that
	\[
	M_{p,\G}^-(\lambda)=M_{p,\G}^+(\lambda)\qquad\forall \lambda\in \R^+\setminus\widetilde{Z}_{p,\G}\,.
	\]
	Furthermore, the righ and left derivatives of $\JJ_{p,\G}$ exist everywhere on $\R^+$ and satisfy
	\[
	\left(\JJ_{p,\G}\right)_-'(\lambda)=\frac{M_{p,\G}^+(\lambda)}2,\qquad \left(\JJ_{p,\G}\right)_+'(\lambda)=\frac{M_{p,\G}^-(\lambda)}2\qquad\forall\lambda>0\,.
	\]
	In particular, $\JJ_{p,\G}$ is locally Lipschitz on $\R^+$ and differentiable in $\R^+\setminus\widetilde{Z}_{p,\G}$, with $\JJ_{p,\G}'(\lambda)=M_{p,\G}^\pm(\lambda)/2$ for every $\lambda\in\R^+\setminus\widetilde{Z}_{p,\G}$.
\end{proposition}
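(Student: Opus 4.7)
The plan is to adapt to the action ground state problem the envelope-theorem and monotonicity arguments developed in \cite[Theorem 2.5]{DST20} for the energy counterpart, relying throughout on two algebraic identities on the Nehari manifold and on the compact Sobolev embedding $H^1(\G)\hookrightarrow L^p(\G)$ available on compact graphs. For every $v\in\NN_{p,\lambda}(\G)$ one has
\begin{equation*}
J_{p,\lambda}(v)=\left(\tfrac12-\tfrac1p\right)\|v\|_p^p=\left(\tfrac12-\tfrac1p\right)\bigl(\|v'\|_2^2+\lambda\|v\|_2^2\bigr),
\end{equation*}
while for every $u\in H^1(\G)\setminus\{0\}$ with $\|u'\|_2^2+\lambda\|u\|_2^2>0$, the Nehari-rescaled function $\sigma_{p,\lambda}(u)u$ lies in $\NN_{p,\lambda}(\G)$ and satisfies $J_{p,\lambda}(\sigma_{p,\lambda}(u)u)=(1/2-1/p)(\|u'\|_2^2+\lambda\|u\|_2^2)^{p/(p-2)}/\|u\|_p^{2p/(p-2)}$. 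Both identities will be used pervasively.

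The first step is to settle \eqref{eq:levJ} together with the attainment statement. Positivity of $\JJ_{p,\G}(\lambda)$ for $\lambda>0$ follows from combining the Sobolev inequality $\|v\|_p\leq C\|v\|_{H^1}$ with the Nehari identity, yielding a uniform lower bound $\|v\|_p\geq c_\lambda>0$ on $\NN_{p,\lambda}(\G)$; attainment is then standard, since any minimizing sequence is $H^1$-bounded by the Nehari identity, so the compact embedding produces a non-zero weak $H^1$-limit $v$ which, by the rescaling formula and lower semicontinuity of the Dirichlet integral, lies in $\NN_{p,\lambda}(\G)$ and attains $\JJ_{p,\G}(\lambda)$. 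For $\lambda\leq 0$, non-attainment is immediate because every $v\in\NN_{p,\lambda}(\G)$ satisfies $J_{p,\lambda}(v)=(1/2-1/p)\|v\|_p^p>0$, and the vanishing $\JJ_{p,\G}(\lambda)=0$ follows from explicit test functions: at $\lambda=0$, take $u_\eps=1+\eps\psi$ for a non-constant $\psi\in H^1(\G)$, so that $\|u_\eps'\|_2^2=O(\eps^2)$ while $\|u_\eps\|_p\to|\G|^{1/p}$, forcing $\sigma_{p,0}(u_\eps)\to 0$ and hence $J_{p,0}(\sigma_{p,0}(u_\eps)u_\eps)\to 0$; at $\lambda<0$, use the continuity and surjectivity of the Rayleigh quotient $\|u'\|_2^2/\|u\|_2^2$ onto $[0,\infty)$ on $H^1(\G)\setminus\{0\}$ to build $u_n$ with $\|u_n\|_2=1$ and $\|u_n'\|_2^2+\lambda\|u_n\|_2^2\to 0^+$, and conclude analogously. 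Continuity at $\lambda=0^+$ is then proved by testing with $u\equiv 1$, which gives $\JJ_{p,\G}(\lambda)\leq(1/2-1/p)\lambda^{p/(p-2)}|\G|\to 0$, while on $\R^+$ both continuity and strict monotonicity come from using a minimizer $v_\mu\in\mathfrak{A}_{p,\G}(\mu)$ as a competitor for $\JJ_{p,\G}(\lambda)$ via the rescaling $\sigma_{p,\lambda}(v_\mu)v_\mu\in\NN_{p,\lambda}(\G)$: for $0<\lambda<\mu$, the strict increase in $t$ of $t\mapsto(\|v_\mu'\|_2^2+t\|v_\mu\|_2^2)^{p/(p-2)}$ (valid since $\|v_\mu\|_2>0$) gives $\JJ_{p,\G}(\lambda)<\JJ_{p,\G}(\mu)$, and the same device combined with the $L^p$-strong convergence of minimizing sequences along $\lambda_n\to\lambda_0$ yields both upper and lower semicontinuity.

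For the derivative formulas I combine the rescaling identity with a Taylor expansion. Fix $\lambda_0>0$, take $\mu>\lambda_0$ and any $v\in\mathfrak{A}_{p,\G}(\lambda_0)$, and set $\sigma:=\sigma_{p,\mu}(v)$. Using the Nehari identity at $\lambda_0$ in the form $\|v'\|_2^2+\mu\|v\|_2^2=\|v\|_p^p+(\mu-\lambda_0)\|v\|_2^2$ gives
\begin{equation*}
\sigma^{p-2}=1+(\mu-\lambda_0)\frac{\|v\|_2^2}{\|v\|_p^p},\qquad \sigma^p-1=\tfrac{p}{p-2}(\mu-\lambda_0)\frac{\|v\|_2^2}{\|v\|_p^p}+O\bigl((\mu-\lambda_0)^2\bigr),
\end{equation*}
and the crucial algebraic identity $(1/2-1/p)\cdot p/(p-2)=1/2$ then translates this into
\begin{equation*}
\JJ_{p,\G}(\mu)-\JJ_{p,\G}(\lambda_0)\,\leq\, J_{p,\mu}(\sigma v)-\JJ_{p,\G}(\lambda_0)\,=\,\tfrac12(\mu-\lambda_0)\|v\|_2^2+O\bigl((\mu-\lambda_0)^2\bigr).
\end{equation*}
Choosing $v$ with $\|v\|_2^2=M_{p,\G}^-(\lambda_0)$ and letting $\mu\to\lambda_0^+$ yields $(\JJ_{p,\G})_+'(\lambda_0)\leq M_{p,\G}^-(\lambda_0)/2$. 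For the reverse inequality I rescale minimal-mass ground states $v_\mu\in\mathfrak{A}_{p,\G}(\mu)$ back to $\NN_{p,\lambda_0}(\G)$; the compact embedding ensures that every subsequential weak-$H^1$ limit of $\{v_\mu\}$ as $\mu\to\lambda_0^+$ lies in $\mathfrak{A}_{p,\G}(\lambda_0)$, so that $\liminf_{\mu\to\lambda_0^+}\|v_\mu\|_2^2\geq M_{p,\G}^-(\lambda_0)$ and hence $(\JJ_{p,\G})_+'(\lambda_0)\geq M_{p,\G}^-(\lambda_0)/2$. The mirror computation for $\mu<\lambda_0$ with maximal-mass minimizers yields $(\JJ_{p,\G})_-'(\lambda_0)=M_{p,\G}^+(\lambda_0)/2$; attainment of $M_{p,\G}^\pm(\lambda_0)$ itself is a direct compactness argument on maximizing/minimizing sequences in $\mathfrak{A}_{p,\G}(\lambda_0)$.

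Finally, the Nehari identity implies $\|v\|_2^2\leq 2p\JJ_{p,\G}(\lambda)/((p-2)\lambda)$ on $\mathfrak{A}_{p,\G}(\lambda)$, so $M_{p,\G}^\pm$ stay uniformly bounded on compact subintervals of $\R^+$, making $(\JJ_{p,\G})_\pm'$ bounded there and giving the local Lipschitz continuity of $\JJ_{p,\G}$ on $\R^+$ (with differentiability a.e.\ following from Rademacher). For the countability of $\widetilde{Z}_{p,\G}$ I would follow the pattern of \cite[Theorem 2.5]{DST20}: iterating the two-sided Nehari-rescaling inequalities above for pairs $\lambda_1<\lambda_2$ in both directions produces a strict-ordering relation of the form $M_{p,\G}^+(\lambda_1)\leq M_{p,\G}^-(\lambda_2)$, forcing the open intervals $(M_{p,\G}^-(\lambda),M_{p,\G}^+(\lambda))$ indexed by $\lambda\in\widetilde{Z}_{p,\G}$ to be pairwise disjoint subsets of $\R^+$, hence at most countably many; differentiability at $\lambda\notin\widetilde{Z}_{p,\G}$ is then immediate from the identity $(\JJ_{p,\G})_+'(\lambda)=(\JJ_{p,\G})_-'(\lambda)$. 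The main technical point throughout is to ensure that weak $H^1$-limits of ground-state sequences taken along $\mu\to\lambda_0$ stay on the Nehari manifold at the limit parameter and remain ground states there: this is what the compact Sobolev embedding on compact graphs provides at no cost, and its loss in the noncompact settings of Sections \ref{sec:Ghalf}--\ref{sec:Zper} is precisely what will force the finer, case-by-case compactness arguments carried out there.
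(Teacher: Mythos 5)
Most of your argument is sound and runs parallel to the paper's: positivity and attainment for $\lambda>0$ via Sobolev embedding plus compactness, $\JJ_{p,\G}(\lambda)=0$ and non-attainment for $\lambda\le 0$, continuity, strict monotonicity, attainment of $M_{p,\G}^\pm$, and the one-sided derivative formulas obtained by rescaling ground states across nearby frequencies and Taylor-expanding $\sigma_{p,\lambda}$ (the identity $(1/2-1/p)\,p/(p-2)=1/2$ is indeed the crux, and your pairing of $M^-$ with the right derivative and $M^+$ with the left derivative is the correct one). Your treatment of $\lambda\le 0$ via surjectivity of the Rayleigh quotient is a legitimate, somewhat simpler alternative to the paper's construction, which interpolates between Kirchhoff eigenfunctions to make $\|v'\|_2^2+\lambda\|v\|_2^2$ positive but $O(\eps^2)$; both achieve the same goal.

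The genuine gap is the countability of $\widetilde{Z}_{p,\G}$. You claim that ``iterating the two-sided Nehari-rescaling inequalities'' for $\lambda_1<\lambda_2$ produces the ordering $M_{p,\G}^+(\lambda_1)\le M_{p,\G}^-(\lambda_2)$, and you then conclude as in the energy setting of DST20. This step does not work. Writing $f_u(\lambda):=\left(\tfrac12-\tfrac1p\right)\bigl((\|u'\|_2^2+\lambda\|u\|_2^2)/\|u\|_p^2\bigr)^{p/(p-2)}$, the two crossed competitor inequalities for ground states $u_1\in\mathfrak{A}_{p,\G}(\lambda_1)$, $u_2\in\mathfrak{A}_{p,\G}(\lambda_2)$ give $f_{u_1}(\lambda_2)-f_{u_1}(\lambda_1)\ge f_{u_2}(\lambda_2)-f_{u_2}(\lambda_1)$, and since each $f_{u_i}$ is convex with increasing derivative this only yields $f_{u_1}'(\lambda_2)\ge f_{u_2}'(\lambda_1)$, i.e.\ a comparison of the ``wrong'' derivatives; it does not compare $\|u_1\|_2^2=2f_{u_1}'(\lambda_1)$ with $\|u_2\|_2^2=2f_{u_2}'(\lambda_2)$. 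Worse, the ordering itself cannot be a general fact in the stated range $p>2$: on the real line the action ground state is the soliton, whose mass scales like $\lambda^{(6-p)/(2(p-2))}$ and is therefore strictly \emph{decreasing} in $\lambda$ when $p>6$, and nothing in your iteration uses $p\le 6$ or compactness in a way that would exclude this behaviour, so the monotonicity of the ground-state mass in $\lambda$ is not something you can invoke. The good news is that the countability you need is already a consequence of what you proved: since the finite one-sided derivatives $(\JJ_{p,\G})_\pm'(\lambda)=M_{p,\G}^\pm(\lambda)/2$ exist at every $\lambda>0$, the classical real-analysis fact (Young/Beppo Levi) that the set of points where both unilateral derivatives exist but differ is at most countable gives $\widetilde{Z}_{p,\G}$ at most countable directly, with no monotonicity of $M_{p,\G}^\pm$ required; this is in line with the paper, which obtains this part by repeating the argument of \cite[Theorem 1.5]{DST23} rather than by any mass-ordering relation. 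Replace your disjoint-intervals argument by this observation and the proof is complete.
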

\begin{proof}
	The proof is divided in three steps.
	
	\smallskip
	{\em Step 1: $\JJ_{p,\G}(\lambda)$ when $\lambda>0$}. Observe first that
	\begin{equation}
	\label{eq:JonN} 
	J_{p,\lambda}(u,\G)=\left(\frac12-\frac1p\right)\|u\|_p^p=\left(\frac12-\frac1p\right)\left(\|u'\|_2^2+\lambda\|u\|_2^2\right)
	\end{equation}
	for every $u\in\NN_{p,\lambda}(\G)$. If $\lambda>0$, Sobolev embeddings of $H^1(\G)$ into $L^p(\G)$ give
	\[
	\|u\|_{H^1}^p\geq C\|u\|_p^p=C\left(\|u'\|_2^2+\lambda\|u\|_2^2\right)\geq C\min\left\{\lambda,1\right\}\|u\|_{H^1}^2, 
	\]
	so that $p>2$ and \eqref{eq:JonN} imply $\JJ_{p,\G}(\lambda)>0$ for every $\lambda>0$. The fact that $\JJ_{p,\G}$ is attained then easily follows by the compactness of $\G$. Indeed, if $(u_n)_n\subset\NN_{p,\lambda}(\G)$ satisfies $J_{p,\lambda}(u_n,\G)\to\JJ_{p,\G}(\lambda)$ as $n\to\infty$, then $(u_n)_n$ is bounded in $H^1(\G)$ by \eqref{eq:JonN}. Hence, there exists $u\in H^1(\G)$ such that (up to subsequences) $u_n\rightharpoonup u$ in $H^1(\G)$ and $u_n\to u$ in $L^q(\G)$ for every $q\geq2$, since the embedding of $H^1(\G)$ into $L^q(\G)$ is compact. This implies that $u\not\equiv0$ on $\G$ (as $\displaystyle \|u\|_p^p=\lim_{n\to\infty}\|u_n\|_p^p=2p\JJ_{p,\G}(\lambda)/(p-2)>0$) and, by weak lower semicontinuity,
	\[
	\sigma_{p,\lambda}(u)\leq \lim_{n\to\infty}\left(\frac{\|u_n'\|_2^2+\lambda\|u_n\|_2^2}{\|u_n\|_p^p}\right)^\frac1{p-2}=1\,,
	\]
	in turn yielding (since $\sigma_{p,\lambda}(u)u\in\NN_{p,\lambda}(\G)$)
	\[
	\JJ_{p,\G}(\lambda)\leq J_{p,\lambda}(\sigma_{p,\lambda}(u)u,\G)=\left(\frac12-\frac1p\right)\sigma_{p,\lambda}(u)^p\|u\|_p^p\leq\left(\frac12-\frac1p\right)\lim_{n\to\infty}\|u_n\|_p^p=\JJ_{p,\G}(\lambda)\,,
	\]
	i.e. $u\in\NN_{p,\lambda}(\G)$ and $\JJ_{p,\G}(\lambda)=J_{p,\lambda}(u,\G)$.
	
	\smallskip
	{\em Step 2: $\JJ_{p,\G}(\lambda)$ when $\lambda\leq0$}. For any fixed $\lambda\leq0$, there exist $k\in\N$ and $t\in[0,1]$ such that $-\lambda=t\lambda_k+(1-t)\lambda_{k+1}$, where $\lambda_k,\lambda_{k+1}$ are the $k$-th and $(k+1)$-th eigenvalues of the operator $\displaystyle -d^2/dx^2$ on $\G$ endowed with homogeneous Kirchhoff vertex conditions. Let $\varphi_k,\varphi_{k+1},\varphi_{k+2}\in H^1(\G)$ be eigenfunctions associated to $\lambda_k,\lambda_{k+1},\lambda_{k+2}$ respectively, such that $\|\varphi_i\|_2=1$, for every $i=k,k+1,k+2$.  For every $\varepsilon>0$, set then $v_\varepsilon:=\sqrt{t}\varphi_k+\sqrt{1-t}\varphi_{k+1}+\varepsilon\varphi_{k+2}$, so that
	\[
	\begin{split}
		\|v_\varepsilon'\|_2^2+\lambda\|v_\varepsilon\|_2^2=&\,t\|\varphi_k'\|_2^2+(1-t)\|\varphi_{k+1}'\|_2^2+\varepsilon^2\|\varphi_{k+2}'\|_2^2\\
		&\,+t\lambda\|\varphi_k\|_2^2+\lambda(1-t)\|\varphi_{k+1}\|_2^2+\varepsilon^2\lambda\|\varphi_{k+2}\|_2^2\\
		=&\,t\lambda_k\|\varphi_k\|_2^2+(1-t)\lambda_{k+1}\|\varphi_{k+1}\|_2^2+\varepsilon^2\lambda_{k+2}\|\varphi_{k+2}\|_2^2\\
		&\,+t\lambda\|\varphi_k\|_2^2+\lambda(1-t)\|\varphi_{k+1}\|_2^2+\varepsilon^2\lambda\|\varphi_{k+2}\|_2^2\\
		=&\,t\lambda_k+(1-t)\lambda_{k+1}+\lambda+\varepsilon^2(\lambda_{k+2}+\lambda)=\varepsilon^2(\lambda_{k+2}+\lambda)>0\,,
	\end{split}
	\]
	where we used the orthogonality of eigenfunctions, the definition of $t$ and the fact that $-\lambda\leq\lambda_{k+1}<\lambda_{k+2}$ (up to multiplicity of the eigenvalues, we can always assume the second inequality to be strict with no loss of generality). Hence, taking
	\[
	w_\varepsilon:=\sigma_{p,\lambda}(v_\varepsilon)v_\varepsilon=\varepsilon^{\frac2{p-2}}\left(\frac{\lambda+\lambda_{k+2}}{\|v_\varepsilon\|_p^p}\right)^{\frac1{p-2}}v_\varepsilon\,,
	\]
	we obtain $w_\varepsilon\in\NN_{p,\lambda}(\G)$ and (recalling \eqref{eq:JonN} and $\|v_\varepsilon\|_p\to\|\sqrt{t}\varphi_k+\sqrt{1-t}\varphi_{k+1}\|_p>0$ as $\varepsilon\to0$)
	\[
	0\leq\JJ_{p,\G}(\lambda)\leq\lim_{\varepsilon\to0}J_{p,\lambda}(w_\varepsilon,\G)=0
	\]
	that completes the proof of \eqref{eq:levJ}.
	
	\smallskip
	{\em Step 3: monotonicity and continuity of $\JJ_{p,\G}$ and properties of $M_{p,\G}^\pm$}. The monotonicity and continuity of $\JJ_{p,\G}$ are standard on compact graphs and can be proved arguing exactly as in \cite[Lemma 2.4(ii) and Remark 2.5]{DST23}. The equality of $M_{p,\G}^-$ and $M_{p,\G}^+$ for every $\lambda>0$ out of an at most countable set $\widetilde{Z}_{p,\G}$ and the properties of the derivatives of $\JJ_{p,\G}$ follow repeating with no modification the argument in the proof of \cite[Theorem 1.5]{DST23} (where the same result is proved for open subsets of $\R^N$ with homogeneous Dirichlet boundary conditions), and then the local Lipschitz continuity is guaranteed by the local boundedness of $\JJ_{p,\G}$, the explicit formulas for its derivatives and \eqref{eq:JonN}. Finally, that $M_{p,\G}^\pm(\lambda)$ are attained for every $\lambda>0$ is ensured again by the compactness of $\G$, since any sequence $(u_n)_n\subset \mathfrak{A}_{p,\G}(\lambda)$ (the set $\mathfrak{A}_{p,\G}(\lambda)$ being defined in \eqref{eq:Y}) is in particular a minimizing sequence for $J_{p,\lambda}$ in $\NN_{p,\lambda}(\G)$, and it thus converges strongly in $H^1(\G)$ (up to subsequences) to some $u\in\mathfrak{A}_{p,\G}(\lambda)$.
\end{proof}

\begin{remark}
	\label{rem:comp}
	Note that,  by definition of $M_{p,\G}^\pm$, for every given $\overline{p}\in(2,\infty)$ and $\lambda>0$ we have
	\begin{equation}
		\label{eq:boundM}
		\liminf_{p\to\overline{p}}M_{p,\G}^-(\lambda)\geq M_{\overline{p},\G}^-(\lambda),\qquad\limsup_{p\to\overline{p}}M_{p,\G}^+(\lambda)\leq M_{\overline{p},\G}^+(\lambda)\,.
	\end{equation}
	To prove \eqref{eq:boundM}, it is actually enough to show that, for any fixed $\lambda$, the action ground state level $\JJ_{p,\G}(\lambda)$ is continuous as a function of $p\to\overline{p}\in(2,\infty)$. To see this, let $p_n\to\overline{p}$ as $n\to\infty$ and observe first that, as for any fixed $v\in\NN_{\overline{p},\lambda}(\G)$ such that $J_{\overline{p},\lambda}(v,\G)=\JJ_{\overline{p},\G}(\lambda)$ we have (by \eqref{eq:levJ} and the Dominated Convergence Theorem)
	\[
	\lim_{n\to\infty}\sigma_{p_n,\lambda}(v)^{p_n-2}=\lim_{n\to\infty}\frac{\|v\|_{\overline{p}}^{\overline{p}}}{\|v\|_{p_n}^{p_n}}=1\,,
	\]
	we obtain
	\begin{equation}
	\label{eq:complimsup}
	\limsup_{n\to\infty} \JJ_{p_n,\G}(\lambda)\leq\limsup_{n\to\infty}\left(\frac12-\frac1{p_n}\right)\sigma_{p_n,\lambda}(v)^{p_n}\|v\|_{p_n}^{p_n}=\left(\frac12-\frac1{\overline{p}}\right)\|v\|_{\overline{p}}^{\overline{p}}=\JJ_{\overline{p},\G}(\lambda)\,.
	\end{equation}
	Let then $u_n\in\NN_{p_n,\lambda}(\G)$ be such that $J_{p_n,\lambda}(u_n,\G)=\JJ_{p_n,\G}(\lambda)$. By \eqref{eq:complimsup}, $(u_n)_n$ is bounded in $H^1(\G)$, so that (up to subsequences) converges weakly in $H^1(\G)$ and strongly in $L^q(\G)$, for every $q\geq2$, to some $\overline{u}$. In particular, $\overline{u}\not\equiv0$ on $\G$, because the convergence of $u_n$ to $u$ is strong in $L^\infty(\G)$ and $u_n\geq\lambda^{\frac1{p_n-2}}$ at any of its local maximum points by \eqref{eq:nlse}.  Since boundedness in $H^1(\G)$ implies boundedness in $L^\infty(\G)$, the Dominated Convergence Theorem gives
	\begin{equation}
	\label{eq:ppbar}
	\int_\G\left||u_n(x)|^{p_n}-|\overline{u}|^{\overline{p}}(x)\,dx\right|\to0\qquad\text{as }n\to\infty\,,
	\end{equation}
	that, together with $\|\overline{u}\|_{\overline{p}}\neq0$ and the strong convergence of $u_n$ to $\overline{u}$ in $L^{\overline{p}}(\G)$, implies
	\[
	\sigma_{\overline{p},\lambda}(u_n)^{\overline{p}-2}=\frac{\|u_n\|_{p_n}^{p_n}}{\|u_n\|_{\overline{p}}^{\overline{p}}}\to 1\qquad\text{as }n\to\infty\,,
	\]
	in turn yielding
	\begin{equation}
	\label{eq:convJpnp}
	\JJ_{\overline{p},\G}(\lambda)\leq\left(\frac12-\frac1{\overline{p}}\right)\lim_{n\to\infty} \sigma_{\overline{p},\lambda}(u_n)^{\overline{p}}\|u_n\|_{\overline{p}}^{\overline{p}}=\lim_{n\to\infty} \left(\frac12-\frac1{p_n}\right)\|u_n\|_{p_n}^{p_n}=\lim_{n\to\infty} \JJ_{p_n,\G}(\lambda)\,.
	\end{equation}
	Combining \eqref{eq:complimsup} and \eqref{eq:convJpnp} proves the desired continuity in $p$ of the action ground state level. Moreover, the previous argument also ensures that the limit $\overline{u}\in\NN_{\overline{p},\lambda}(\G)$ satisfies $\JJ_{\overline{p},\G}(\lambda)=J_{\overline{p},\lambda}(\overline{u},\G)$ and that the convergence of $u_n$ to $\overline{u}$ is strong in $H^1(\G)$ (by \eqref{eq:JonN}), thus implying \eqref{eq:boundM}.	
\end{remark}

We conclude this section with a general relation between energy and action ground states that will play a crucial role in the proof of Theorem \ref{thm:compact}. The next proposition simply rephrases in the context of graphs the content of \cite[Theorem 1.3]{DST23}. Even though in \cite{DST23} the proof is developed for open subsets of $\R^N$, the argument is completely abstract and extends to graphs without changes.
\begin{proposition}
	\label{prop:AEgs}
	Let $\G$ be a compact graph, $p\in(2,6]$ and $\mu>0$. Assume that $u\in H_\mu^1(\G)$ is a ground state of $E_p(\cdot,\G)$ with mass $\mu$ and let $\lambda=\LL_p(u,\G)$ (defined as in \eqref{eq:Lu}) Then $u\in\NN_{p,\lambda}(\G)$ is also a ground state of $J_{p,\lambda}(\cdot,\G)$. Moreover, any other ground state $v\in\NN_{p,\lambda}(\G)$ of $J_{p,\lambda}(\cdot,\G)$ satisfies $\|v\|_2^2=\mu$ and it is also a ground state of $E_p(\cdot,\G)$ with mass $\mu$.
\end{proposition}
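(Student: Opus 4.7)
My plan is to set up a single-parameter amplitude scaling that ties energy minimization over $H_\mu^1(\G)$ to action minimization over $\NN_{p,\lambda}(\G)$. First, I would note that the Nehari membership $u\in\NN_{p,\lambda}(\G)$ is automatic: the definition of $\lambda=\LL_p(u,\G)$ in \eqref{eq:Lu} is tailored precisely so that $\|u'\|_2^2+\lambda\|u\|_2^2=\|u\|_p^p$. This immediately yields $J_{p,\lambda}(u,\G)=(\frac12-\frac1p)\|u\|_p^p$ and $E_p(u,\G)=J_{p,\lambda}(u,\G)-\lambda\mu/2$.

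To show $u$ is an action ground state, I would pick an arbitrary competitor $w\in\NN_{p,\lambda}(\G)$ and rescale it to $tw$ with $t:=\sqrt{\mu/\|w\|_2^2}$, so that $tw\in H_\mu^1(\G)$. Since $u$ minimizes $E_p$ at mass $\mu$, we have $E_p(u,\G)\leq E_p(tw,\G)$. Eliminating the gradient norms through the Nehari identities for $u$ and $w$, the additive constant $\lambda\mu/2$ cancels on both sides and the inequality reduces to
\[
\left(\frac12-\frac1p\right)\|u\|_p^p\,\leq\,\|w\|_p^p\left(\frac{t^2}{2}-\frac{t^p}{p}\right).
\]
The real function $f(t):=t^2/2-t^p/p$ attains its unique maximum on $(0,\infty)$ at $t=1$, with value $\frac12-\frac1p$, so the right-hand side is bounded above by $J_{p,\lambda}(w,\G)$, while the left-hand side equals $J_{p,\lambda}(u,\G)$. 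This yields $J_{p,\lambda}(u,\G)\leq J_{p,\lambda}(w,\G)$ and proves $u\in\mathfrak{A}_{p,\G}(\lambda)$.

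For the converse, given any $v\in\mathfrak{A}_{p,\G}(\lambda)$, I would run the same chain of inequalities with $w=v$. Since $J_{p,\lambda}(v,\G)=J_{p,\lambda}(u,\G)$, every intermediate inequality must in fact be an equality; the strict maximality of $f$ at $t=1$ then forces the scaling parameter $t=\sqrt{\mu/\|v\|_2^2}$ to equal $1$, i.e. $\|v\|_2^2=\mu$. Hence $v\in H_\mu^1(\G)$, and the identity $E_p(v,\G)=J_{p,\lambda}(v,\G)-\lambda\mu/2=E_p(u,\G)=\EE_{p,\G}(\mu)$ shows that $v$ is also an energy ground state with mass $\mu$.

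The only delicate feature is that, unlike in $\R^N$, on a metric graph no spatial dilation of functions is available, so the comparison must rely entirely on the one-parameter amplitude family $tw$. Fortunately the homogeneity of both the Nehari identity and the energy functional makes this single-scalar comparison sharp, and the strict maximality of $f$ at $t=1$ is exactly the ingredient that provides the rigidity $\|v\|_2^2=\mu$ for every action ground state sharing $\lambda$ with $u$.
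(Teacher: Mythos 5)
Your proof is correct, and it is essentially the same argument as the one the paper relies on: the paper gives no independent proof but invokes the abstract result of \cite[Theorem 1.3]{DST23}, whose mechanism is precisely your amplitude rescaling $w\mapsto tw$ with $t=\sqrt{\mu/\|w\|_2^2}$, combined with the fact that $t\mapsto \frac{t^2}{2}-\frac{t^p}{p}$ has its strict maximum at $t=1$ (equivalently, that a Nehari point maximizes the action along its ray), which both yields $J_{p,\lambda}(u,\G)\le J_{p,\lambda}(w,\G)$ and, in the equality case, the rigidity $\|v\|_2^2=\mu$. All steps (the automatic Nehari membership of $u$ via \eqref{eq:Lu}, the cancellation of $\lambda\mu/2$, and the equality analysis) check out.
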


\section{Asymptotic estimates in the $L^2$-critical regime $p=6$}
\label{sec:est6}

This section is devoted to the analysis of the $L^2$-critical action ground state problem $\JJ_{6,\G}$ on compact graphs in the limit for $\lambda\to\infty$.

\begin{remark}
	\label{rem:lev6R}
	Recall that, for every $\lambda>0$, on the real line the action ground state level at $p=6$ is explicitly given by
	\[
	\JJ_{6,\R}(\lambda)=\frac{\mu_\R}2\lambda\,,
	\]
	where $\mu_\R$ is the number defined in Proposition \ref{prop:Ecomp}, and the unique (up to symmetries) action ground state in $\NN_{6,\lambda}(\R)$ is the soliton $\phi_\lambda(x):=\lambda^{\frac14}\phi_1(\sqrt{\lambda}\,x)$, where 
	\begin{equation}
		\label{eq:phi1}
	\phi_1(x)=\sqrt{\sqrt{3}\,\text{\normalfont sech}(2|x|)}
	\end{equation}
	is the unique positive solution in $H^1(\R)$ of 
	\[
	\begin{cases}
		u''+|u|^4u=u & \text{on }\R\\
		\|u\|_{\infty}=u(0)\,. & 
	\end{cases}
	\]
	Solitons satisfy $\|\phi_\lambda\|_2^2=\mu_\R$ and $E_6(\phi_\lambda,\R)=0=\EE_{6,\R}(\mu_\R)$, namely they are the unique $L^2$-critical energy ground states on $\R$. Similarly, on the half-line $\R^+$ there results
	\[
	\JJ_{6,\R^+}(\lambda)=\frac{\mu_{\R^+}}2\lambda\qquad\forall\lambda>0,
	\]
	with $\displaystyle\mu_{\R^+}=\mu_\R/2$, and action and energy ground states are the half-solitons, i.e. the restriction of $\phi_\lambda$ to $\R^+$.
\end{remark}
The main result of the section is the following proposition.
\begin{proposition}
	\label{prop:est6}
	Let $\G$ be a compact graph. For every $\varepsilon>0$ there exists $\widetilde{\lambda}>0$ (depending on $\varepsilon$) such that
	\begin{itemize}
		\item[(i)] if $\G$ has no pendant, then 
		\begin{equation}
		\label{eq:critmasR}
		\mu_\R-\varepsilon\leq M_{6,\G}^-(\lambda)\leq M_{6,\G}^+(\lambda)\leq\mu_\R+\varepsilon\qquad\forall\lambda\geq\widetilde{\lambda}\,;
		\end{equation}
		
		\item[(ii)] if $\G$ has at least one pendant, then 
		\begin{equation}
			\label{eq:critmasR+}
		\mu_{\R^+}-\varepsilon\leq M_{6,\G}^-(\lambda)\leq M_{6,\G}^+(\lambda)\leq\mu_{\R^+}+\varepsilon\qquad\forall\lambda\geq\widetilde{\lambda}\,.
		\end{equation}
			\end{itemize}
		Moreover, there exists $\gamma>0$ such that if $\G$ has no pendant, then
		\begin{equation}
		\label{eq:JGJR}
		\left|\JJ_{6,\G}(\lambda)-\JJ_{6,\R}(\lambda)\right|=o\left(e^{-\gamma\sqrt{\lambda}}\right)\qquad\text{as }\lambda\to\infty\,,
		\end{equation}
		whereas if $\G$ has at least one pendant, then
		\begin{equation}
		\label{eq:JGJR+}
		\left|\JJ_{6,\G}(\lambda)-\JJ_{6,\R^+}(\lambda)\right|=o\left(e^{-\gamma\sqrt{\lambda}}\right)\qquad\text{as }\lambda\to\infty\,.
		\end{equation}
\end{proposition}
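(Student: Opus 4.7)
The plan is to sandwich $\JJ_{6,\G}(\lambda)$ between an explicit upper bound coming from a cut-off soliton and a lower bound coming from the sharp Gagliardo--Nirenberg inequality on $\G$, and to read the mass estimates off a quantitative sharpening of the lower bound.

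For the upper bound, when $\G$ has no pendant I would pick any edge $e$ of $\G$, identify it with an interval $[-\ell/2,\ell/2]$, and take as competitor $v_\lambda:=\chi\,\phi_\lambda$ on $e$, extended by $0$ to the rest of $\G$, where $\phi_\lambda$ is the rescaled soliton of Remark \ref{rem:lev6R} and $\chi$ is a smooth cut-off equal to $1$ on $[-\ell/4,\ell/4]$ and vanishing at $\pm\ell/2$. The function $v_\lambda$ lies in $H^1(\G)$ with Kirchhoff satisfied automatically. Since $\phi_1(y)\sim Ce^{-|y|}$ at infinity, every truncation error on $\|v_\lambda\|_{L^q(\G)}^q$ for $q=2,6$ and on $\|v_\lambda'\|_{L^2(\G)}^2$ is of order $e^{-c\sqrt{\lambda}}$ for some $c>0$. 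Since $\phi_\lambda\in\NN_{6,\lambda}(\R)$, the projection onto the Nehari manifold satisfies $\sigma_{6,\lambda}(v_\lambda)=1+o(e^{-c\sqrt{\lambda}})$; using $J_{6,\lambda}(u,\G)=\|u\|_6^6/3$ on $\NN_{6,\lambda}(\G)$ then gives $\JJ_{6,\G}(\lambda)\leq\JJ_{6,\R}(\lambda)+o(e^{-\gamma\sqrt{\lambda}})$ for some $\gamma>0$. In the pendant case one repeats the same construction with a half-soliton placed on a pendant edge, with peak at the pendant vertex (where the Kirchhoff condition reduces to Neumann and is satisfied by $\phi_\lambda$), yielding $\JJ_{6,\G}(\lambda)\leq\JJ_{6,\R^+}(\lambda)+o(e^{-\gamma\sqrt{\lambda}})$.

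For the lower bound, Proposition \ref{prop:Ecomp}(ii) pins down the critical mass on $\G$ as $\mu_\G\in\{\mu_\R,\mu_{\R^+}\}$ according to the pendant alternative, and a standard scaling analysis of $E_6$ shows that this is equivalent to the sharp Gagliardo--Nirenberg inequality $\|u\|_6^6\leq(3/\mu_\G^2)\|u'\|_2^2\|u\|_2^4$ on $H^1(\G)$. For any $u\in\NN_{6,\lambda}(\G)$, write $A=\|u'\|_2^2$ and $B=\|u\|_2^2$; combining this inequality with the Nehari identity gives $A(3B^2/\mu_\G^2-1)\geq\lambda B$, forcing $B>\mu_\G/\sqrt{3}$, and
\[
J_{6,\lambda}(u,\G)=\frac{A+\lambda B}{3}\geq g(B):=\frac{\lambda B^3}{3B^2-\mu_\G^2}\,.
\]
A direct calculation gives $g'(B)=3\lambda B^2(B^2-\mu_\G^2)/(3B^2-\mu_\G^2)^2$, so $g$ attains its unique minimum on $(\mu_\G/\sqrt{3},\infty)$ at $B=\mu_\G$, with $g(\mu_\G)=\mu_\G\lambda/2$. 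Hence $\JJ_{6,\G}(\lambda)\geq\mu_\G\lambda/2$, which coincides with $\JJ_{6,\R}(\lambda)$ or $\JJ_{6,\R^+}(\lambda)$ according to the pendant alternative. Combined with the upper bound this proves \eqref{eq:JGJR}--\eqref{eq:JGJR+}.

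For the mass estimates \eqref{eq:critmasR}--\eqref{eq:critmasR+}, observe that $g$ is smooth at $\mu_\G$ with $g''(\mu_\G)=3\lambda/(2\mu_\G)>0$, so Taylor expansion provides a constant $c>0$ and a fixed neighborhood of $\mu_\G$ on which $g(B)\geq\mu_\G\lambda/2+c\lambda(B-\mu_\G)^2$. For any $u_\lambda\in\mathfrak{A}_{6,\G}(\lambda)$ of mass $B_\lambda$, combining $J_{6,\lambda}(u_\lambda,\G)=\JJ_{6,\G}(\lambda)\leq\mu_\G\lambda/2+o(e^{-\gamma\sqrt{\lambda}})$ with the pointwise bound $J_{6,\lambda}(u_\lambda,\G)\geq g(B_\lambda)$ forces $(B_\lambda-\mu_\G)^2=o(\lambda^{-1}e^{-\gamma\sqrt{\lambda}})$, whence $B_\lambda\to\mu_\G$ uniformly on $\mathfrak{A}_{6,\G}(\lambda)$; taking $\inf$ and $\sup$ delivers \eqref{eq:critmasR}--\eqref{eq:critmasR+} (in fact with an exponentially small rate that is much sharper than the $\varepsilon>0$ required). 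The main obstacle will be the exponential bookkeeping in the upper bound: one must verify that the polynomial-in-$\lambda$ factors appearing in $\phi_\lambda$ and $\phi_\lambda'$ are absorbed by the exponential decay after truncation, and that the Nehari projection costs only an exponentially small correction. The other ingredients---the identification of the sharp Gagliardo--Nirenberg constant from Proposition \ref{prop:Ecomp}(ii), the one-variable minimization of $g$, and the quadratic Taylor expansion---are routine.
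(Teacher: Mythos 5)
Your upper bound (truncated soliton on a single edge, projected back to the Nehari manifold, with exponentially small truncation errors) is sound and is essentially the paper's own Step 1. The lower bound, however, is built on a false inequality: the pure Gagliardo--Nirenberg bound $\|u\|_6^6\le\frac{3}{\mu_\G^2}\|u'\|_2^2\|u\|_2^4$ cannot hold on a \emph{compact} graph, since any nonzero constant function has $\|u'\|_2=0$ but $\|u\|_6\neq0$. The equivalence you invoke between the critical-mass threshold and a sharp GN inequality is a feature of noncompact graphs, where $\EE_{6,\G}(\mu)=0$ below the threshold; on compact graphs Proposition \ref{prop:Ecomp}(ii) gives $\EE_{6,\G}(\mu)<0$ \emph{strictly} for every $\mu\le\mu_\G$ (already constants have negative energy), which is incompatible with your inequality. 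In fact the consequence you draw, $\JJ_{6,\G}(\lambda)\ge\frac{\mu_\G}{2}\lambda$ for all $\lambda>0$, is provably false: taking an energy ground state $\overline u$ with mass $\mu_\G$ and $\overline\lambda=\LL_6(\overline u,\G)$, Proposition \ref{prop:AEgs} gives $\EE_{6,\G}(\mu_\G)=E_6(\overline u,\G)=\JJ_{6,\G}(\overline\lambda)-\frac{\mu_\G}{2}\overline\lambda$, so your bound would force $\EE_{6,\G}(\mu_\G)\ge0$, contradicting Proposition \ref{prop:Ecomp}(ii); the strict inequality $\JJ_{6,\G}(\overline\lambda)<\frac{\mu_\G}{2}\overline\lambda$ at this $\overline\lambda$ is precisely what powers the proof of Theorem \ref{thm:compact}. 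Since both \eqref{eq:JGJR}--\eqref{eq:JGJR+} and the mass estimates \eqref{eq:critmasR}--\eqref{eq:critmasR+} in your argument rest entirely on the function $g$, the whole second half collapses.

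Note also that a repaired compact-graph inequality with an additive remainder (say $\|u\|_6^6\le\frac{3}{\mu_\G^2}\|u'\|_2^2\|u\|_2^4+C\|u\|_2^6$), even if established with the sharp leading constant, would only yield $\JJ_{6,\G}(\lambda)\ge\frac{\mu_\G}{2}\lambda-O(1)$: an error of order one in $\lambda$, perhaps enough for the fixed-$\varepsilon$ statements (i)--(ii) but far from the exponential closeness $o(e^{-\gamma\sqrt\lambda})$ demanded by \eqref{eq:JGJR}--\eqref{eq:JGJR+}, which is exactly the precision used later against the nonzero constant $\EE_{6,\G}(\mu_\G)$. This is why the paper proceeds differently: after rescaling to $\G_\lambda=\sqrt\lambda\,\G$, it shows that action ground states have a unique peak and converge to the soliton on one edge, proves via a comparison/maximum-principle argument that they are exponentially small at the minimum points adjacent to the relevant vertex, and then performs a surgery (attaching two short pendants where the function is exponentially small) so that a symmetric (or monotone, in the pendant case) rearrangement onto $\R$ (resp.\ $\R^+$) becomes admissible, at a cost $O(e^{-c\sqrt\lambda})$; the mass asymptotics then follow from the strong convergence of the rearranged minimizing family to $\phi_1$. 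Some quantitative substitute for these steps is unavoidable if you want the exponential rate.
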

\begin{proof}
	Let us start by proving the result for compact graphs with no pendant. Observe first that, if $u\in\NN_{6,\lambda}(\G)$ is a ground state of $J_{6,\lambda}(\cdot,\G)$, then
	\begin{equation}
	\label{eq:ul}
	u_\lambda(x):=\lambda^{-\frac14}u\big(x/\sqrt{\lambda}\big),
	\end{equation}
	defined on $\G_\lambda:=\sqrt{\lambda}\,\G$, satisfies $u_\lambda\in\NN_{6,1}(\G_\lambda)$ and it is a ground state of $J_{6,1}(\cdot,\G_\lambda)$. Moreover, direct computations give
	\begin{equation}
	\label{eq:GtoGl}
	\|u_\lambda\|_{L^2(\G_\lambda)}=\|u\|_{L^2(\G)}\,,\qquad \JJ_{6,\G}(\lambda)=\lambda \JJ_{6,\G_\lambda}(1)\,.
	\end{equation}
Hence, in view of Remark \ref{rem:lev6R}, to prove \eqref{eq:critmasR} and \eqref{eq:JGJR} it is enough to show that
\begin{equation}
\label{eq:levGl}
\left|\JJ_{6,\G_\lambda}(1)-\JJ_{6,\R}(1)\right|=o\left(e^{-\gamma\sqrt{\lambda}}\right)\qquad\text{as }\lambda\to\infty
\end{equation}
for some $\gamma>0$, and that
\begin{equation}
	\label{eq:massGl}
	 \|u_\lambda\|_{L^2(\G_\lambda)}^2\to\|\phi_1\|_{L^2(\R)}^2=\mu_\R\qquad\text{as }\lambda\to\infty
\end{equation}
uniformly on the set of ground states $u_\lambda\in\NN_{6,1}(\G_\lambda)$ of $J_{6,1}(\cdot, \G_\lambda)$.

The rest of the proof is divided into five steps.
\smallskip

{\em Step 1: estimate from above of $\JJ_{6,\G_{\lambda}}(1)$.}
Setting $\displaystyle\ell:=\min_{e\in\E_\G}|e|/2$, for sufficiently large $\lambda$ it holds
\begin{equation}
	\label{eq:estR1}
\JJ_{6,\G_\lambda}(1)\leq\JJ_{6,\R}(1)+O\left(e^{-2\ell\sqrt{\lambda}}\right)\,.
\end{equation}
Indeed, consider the function $v:\left[-\sqrt{\lambda}\,\ell,\sqrt{\lambda}\,\ell\right]\to\R$ defined by
\[
v(x):=\begin{cases}
\phi_1(x) & \text{if }x\in\big[-\sqrt{\lambda}\,\ell+1,\sqrt{\lambda}\,\ell-1\big]\\
\phi_1\big(\sqrt{\lambda}\,\ell-1\big)\left(\sqrt{\lambda}\,\ell-x\right) & \text{if }x\in\big[\sqrt{\lambda}\,\ell-1,\sqrt{\lambda}\,\ell\big]\\
\phi_1\big(-\sqrt{\lambda}\,\ell+1\big)\left(x+\sqrt{\lambda}\,\ell\right) & \text{if }x\in\big[-\sqrt{\lambda}\,\ell,-\sqrt{\lambda}\,\ell+1\big]\,,
\end{cases}
\]
where $\phi_1$ is the soliton on the real line given by \eqref{eq:phi1}. Since
\[
\begin{split}
&\|v\|_{L^2\left(-\sqrt{\lambda}\,\ell,\sqrt{\lambda}\,\ell\right)}^2=\|\phi_1\|_{L^2\left(-\sqrt{\lambda}\,\ell+1,\sqrt{\lambda}\,\ell-1\right)}^2+\frac23\phi_1\big(\sqrt{\lambda}\,\ell-1\big)^2\,,\\
&\|v'\|_{L^2\big(-\sqrt{\lambda}\,\ell,\sqrt{\lambda}\,\ell\big)}^2=\|\phi_1'\|_{L^2\big(-\sqrt{\lambda}\,\ell+1,\sqrt{\lambda}\,\ell-1\big)}^2+2\phi_1\big(\sqrt{\lambda}\,\ell-1\big)^2\,,\\
&\|v\|_{L^6\big(-\sqrt{\lambda}\,\ell,\sqrt{\lambda}\,\ell\big)}^6=\|\phi_1\|_{L^6\big(-\sqrt{\lambda}\,\ell+1,\sqrt{\lambda}\,\ell-1\big)}^6+\frac27\phi_1\big(\sqrt{\lambda}\,\ell-1\big)^6\,,
\end{split}
\]
by \eqref{eq:phi1} when $\lambda\to\infty$ we obtain
\[
\begin{split}
&\|v\|_{L^2\left(-\sqrt{\lambda}\,\ell,\sqrt{\lambda}\,\ell\right)}^2=\|\phi_1\|_{L^2(\R)}^2+O\left(e^{-2\ell\sqrt{\lambda}}\right)\\
&\|v'\|_{L^2\left(-\sqrt{\lambda}\,\ell,\sqrt{\lambda}\,\ell\right)}^2=\|\phi_1'\|_{L^2(\R)}^2+O\left(e^{-2\ell\sqrt{\lambda}}\right)\\
&\|v\|_{L^6\left(-\sqrt{\lambda}\,\ell,\sqrt{\lambda}\,\ell\right)}^6=\|\phi_1\|_{L^6(\R)}^6+o\left(e^{-2\ell\sqrt{\lambda}}\right)\,,
\end{split}
\]
in turn yielding (recall \eqref{eq:sigma}) $\sigma_{6,1}(v)^6=1+O\left(e^{-2\ell\sqrt{\lambda}}\right)$. Hence, the function $w:=\sigma_{6,1}(v)v\in\NN_{6,1}\left(-\sqrt{\lambda}\,\ell,\sqrt{\lambda}\,\ell\right)$ and it has compact support, so that we can think of it as a function in $\NN_{6,1}(\G_\lambda)$ compactly supported on a single edge of the graph, thus implying, as $\lambda\to\infty$,
\[
\begin{split}
\JJ_{6,\G_{\lambda}}(1)\leq J_{6,1}(w,\G_\lambda)&\,=\frac13\|w\|_6^6=\frac13\sigma_{6,1}(v)^6\|v\|_6^6\\
&\,=\frac13\left(1+O\left(e^{-2\ell\sqrt{\lambda}}\right)\right)\left(\|\phi_1\|_6^6+o\left(e^{-2\ell\sqrt{\lambda}}\right)\right)=\JJ_{6,\R}(1)+O\left(e^{-2\ell\sqrt{\lambda}}\right),
\end{split}
\]
that is \eqref{eq:estR1}.
\smallskip

{\em Step 2: asymptotic estimates for action ground states.}
Let $u_\lambda$ be an action ground state in $\NN_{6,1}(\G_\lambda)$ and assume $u_\lambda>0$ on $\G_\lambda$ (that this gives no loss of generality is standard, see e.g. \cite[Remark 3.1]{DDGST}). By \eqref{eq:JonN} and \eqref{eq:estR1}, there exists a constant $C>0$ independent of $\lambda$ such that $\|u_\lambda\|_6^6=\|u_\lambda'\|_2^2+\|u_\lambda\|_2^2\leq C$ as soon as $\lambda$ is large enough. Since the length of each edge of $\G_\lambda$ diverges as $\lambda\to\infty$, this yields
\begin{equation}
\label{eq:alpha1}
\alpha_1:=\max_{e\in\E_{\G_\lambda}}\min_{x\in e} u_\lambda(x)\to0\qquad\text{as }\lambda\to\infty
\end{equation}
uniformly on the set of action ground states in $\NN_{6,1}(\G_\lambda)$ (because if this were not the case, there would exist a sequence of action ground states $u_\lambda\in\NN_{6,1}(\G_\lambda)$ along which $\|u_\lambda\|_6^6\geq\alpha_1^6|\G|\sqrt{\lambda}\to\infty$ as $\lambda\to\infty$). Conversely, since $u_\lambda$ satisfies
\begin{equation}
\label{eq:NLSul}
\begin{cases}
u_\lambda''+u_\lambda^{5}=u_\lambda & \text{on each edge }e\in\E_{\G_\lambda}\\
\sum_{e\succ \vv}\frac{du_\lambda}{dx_e}(\vv)=0 & \text{for every vertex }\vv\in\V_{\G_\lambda}\\
u_\lambda>0 & \text{on }\G_\lambda\,,
\end{cases}
\end{equation}
any local maximum point $x\in\G_\lambda$ of $u_\lambda$ is such that
\begin{equation}
	\label{eq:maxu}
	u_\lambda(x)\geq1\,.
\end{equation}
Furthermore, if in \eqref{eq:maxu} equality holds at one point, then by uniqueness of the solution to the Cauchy problem solved by $u_\lambda$ we would have $u_\lambda\equiv1$ on all edges containing that point. But this is impossible for large $\lambda$, as it would imply $\|u_\lambda\|_6\to\infty$ as $\lambda\to\infty$. Hence, whenever $\lambda$ is large enough all local maximum points of $u_\lambda$ are isolated.

Observe also that
\begin{equation}
	\label{eq:alpha2}
\alpha_2:=\max_{\vv\in\V_{\G_\lambda}}u_\lambda(\vv)\to0\qquad\text{as }\lambda\to\infty
\end{equation}
uniformly on the set of action ground states in $\NN_{6,1}(\G_\lambda)$. Indeed, assume by contradiction that this is not the case. Then there exist numbers $\lambda_n\to\infty$ and action ground states $u_{\lambda_n}\in\NN_{6,1}(\G_{\lambda_n})$ along which
\begin{equation}
\label{eq:absV}
\liminf_{n\to\infty}\max_{\vv\in\V_{\G_{\lambda_n}}}u_{\lambda_n}(\vv)>0\,.
\end{equation}
With no loss of generality, let $\overline{\vv}\in\V_{\G_{\lambda_n}}$ be a vertex of $\G_{\lambda_n}$ such that $\displaystyle u_{\lambda_n}(\overline{\vv})=\max_{\vv\in\V_{\G_{\lambda_n}}}u_{\lambda_n}(\vv)$ (where with a slight abuse of notation we identify corresponding vertices in $\G_{\lambda_n}$ for different values of $\lambda_n$) and let $\deg(\overline{\vv})$ be its degree (i.e. the total number of edges emanating from $\overline{\vv}$), that (up to subsequences) is independent of $\lambda_n$ by the fact that $\G_{\lambda_n}=\sqrt{\lambda_n}\G$ and that $\G$ has finitely many vertices. Since $\G$ has no pendant, $\deg(\overline{\vv})\geq3$. For every $i=1,\dots,\deg(\overline{\vv})$, let $a_i^n>0$ denote the distance, along the $i$-th edge $e_i\in\E_{\G_{\lambda_n}}$ incident at $\overline{\vv}$, between $\overline{\vv}$ and the local minimum point of $u_{\lambda_n}$ on $e_i$ closest to $\overline{\vv}$. Since $u_{\lambda_n}$ satisfies \eqref{eq:NLSul} on $\G_{\lambda_n}$, by  phase-plane analysis, \eqref{eq:alpha1} and \eqref{eq:absV}, it follows that $a_i^n\to\infty$ as $n\to\infty$. Moreover, the restriction of $u_{\lambda_n}$ to the interval $[0,a_i^n]$ on $e_i$ starting at $\overline{\vv}$ is either monotone decreasing from $0$ to $a_i^n$, or monotone increasing from $0$ to a local maximum point in the interior of $[0,a_i^n]$ and then monotone decreasing from this point to $a_i^n$, and it satisfies
\[
\begin{cases}
u_{\lambda_n}''+u_{\lambda_n}^5=u_{\lambda_n} & \text{on }[0,a_i^n]\\
u_{\lambda_n}>0, \quad u_{\lambda_n}(0)\geq K
\end{cases}
\]
with $u_{\lambda_n}(a_i^n)\to0$ as $n\to\infty$, for some $K>0$ independent of $n$ by \eqref{eq:absV}. By continuity in the phase-plane, as $n\to\infty$ we then have that the restriction of $u_{\lambda_n}$ to each interval $[0,a_i^n]$ converges in $C^2_{\text{\normalfont loc}}$ to the restriction of the soliton $\phi_1$ to a suitable interval of the form $[a_i,\infty)$. Therefore, the union over $i=1,\dots,\deg(\overline{\vv})$ of these restrictions of $u_{\lambda_n}$  converges to a strictly positive $H^1$ solution of the problem
\begin{equation}
	\label{eq:nlsSTAR}
\begin{cases}
	v''+v^5=v & \text{on each edge of }S_{\deg{\overline{\vv}}}\\
	\sum_{i=1}^{\deg(\overline{\vv})}\frac{dv}{dx_i}(0)=0 & 
\end{cases}
\end{equation}
on the infinite star graph $S_{\deg(\overline{\vv})}$ with $\deg(\overline{\vv})$ half-lines. But this is impossible, since it is well-known (see e.g. \cite{ACFN}) that any $H^1$ solution $v$ of \eqref{eq:nlsSTAR} satisfies $\|v\|_{L^6\left(S_{\deg(\overline{\vv})}\right)}^6\geq\frac32\|\phi_1\|_{L^6(\R)}^6$, and the convergence to one such $v$ of the restriction of $u_{\lambda_n}$ to $\displaystyle \bigcup_{i=1}^{\deg(\overline{\vv})}[0,a_i^n]$ would then yield
\[
\liminf_{n\to\infty}\JJ_{6,\G_{{\lambda_n}}}(1)=\liminf_{n\to\infty}J_{6,1}(u_{\lambda_n},\G_{\lambda_n})\geq\liminf_{n\to\infty}\frac13\|u_{\lambda_n}\|_{L^6\left(\bigcup_{i=1}^{\deg(\overline{\vv})}[0,a_i^n]\right)}^6
\geq\frac12\|\phi_1\|_{L^6(\R)}^6>\JJ_{6,\R}(1), 
\]
violating \eqref{eq:estR1} (since $\JJ_{6,\R}(1)=\|\phi_1\|_{L^6(\R)}^6/3$). Hence, \eqref{eq:alpha2} is proved. 

\smallskip
{\em Step 3: convergence of action ground states to $\phi_1$.}
By \eqref{eq:maxu} and \eqref{eq:alpha2},  each local maximum point of any positive action ground state $u_\lambda\in\NN_{6,1}(\G_\lambda)$ belongs to the interior of some edge as soon as $\lambda$ is sufficiently large.

For each of the local maximum points $\overline{x}\in\G_\lambda$ of $u_\lambda$, consider now the connected component of the superlevel set $\left\{x\in\G_\lambda\,:\, u_\lambda>\alpha_2\right\}$ containing $\overline{x}$. By \eqref{eq:maxu} and \eqref{eq:alpha2}, for each $\overline{x}$ this set is an interval fully contained into one edge of $\G_\lambda$ and, since $u_\lambda$ satisfies \eqref{eq:NLSul}, the restriction of $u_\lambda$ to each of these intervals is symmetric with respect to the middle point and monotone decreasing from the middle point to the endpoints of the interval. Observe furthermore that, again by \eqref{eq:NLSul} and the fact that the $L^\infty$ norm of $u_\lambda$ is bounded uniformly on $\lambda$, standard phase-plane analysis guarantees that, as $\lambda\to\infty$,  the length of each of these intervals diverges uniformly on the set of action ground states in $\NN_{6,1}(\G_\lambda)$. Now, among all these intervals (that, for fixed $u_\lambda$, are in finite number because the local maximum points of $u_\lambda$ are isolated), let $\overline{A}:=\left[-a_\lambda,a_\lambda\right]$ be the one on which the $L^6$ norm of the restriction of $u_\lambda$ is minimal. Note that
\[
u_\lambda'(-a_\lambda)\to0\qquad\text{as }\lambda\to0
\]
uniformly on the set of action ground states in $\NN_{6,1}(\G_\lambda)$. Indeed, if we assume by contradiction that this were not true, then there would exist numbers $\lambda_n\to\infty$ and action ground states $u_{\lambda_n}\in\NN_{6,1}(\G_{\lambda_n})$ for which $|u_{\lambda_n}'(-a_{\lambda_n})|\geq K$ for every $n$, for some constant $K>0$ independent of $n$. Since $u_{\lambda_n}(-a_{\lambda_n})=\alpha_2$,  \eqref{eq:alpha2} and standard phase-plane analysis would then imply that $u_{\lambda_n}$ changes sign on $\G_{\lambda_n}$, contradicting \eqref{eq:NLSul}. 

All in all, the restriction of $u_\lambda$ to $[-a_\lambda,a_\lambda]$ is a symmetric function, monotone decreasing on $[0,a_\lambda]$, satisfying
\[
\begin{cases}
	u_\lambda''+u_\lambda^5=u_\lambda & \text{on }[-a_\lambda,a_\lambda]\\
	u_\lambda>0,\quad \|u_\lambda\|_\infty=u_\lambda(0)\geq1 
\end{cases}
\]
and such that $u_\lambda(-a_\lambda)\to0, u_\lambda'(-a_\lambda)\to0$ as $\lambda\to\infty$, so that by continuity in the phase-plane it converges in $C_{\text{\normalfont loc}}^2$ to the soliton $\phi_1$ on $\R$.  

Observe that this convergence to $\phi_1$ implies that, as soon as $\lambda$ is large enough, any action ground state $u_\lambda\in\NN_{6,1}(\G_\lambda)$ has a unique local maximum point (that in $\overline{A}$). Indeed, if this were not the case, there would exist $\lambda_n\to\infty$ and $u_n\in\NN_{6,1}(\G_{\lambda_n})$ such that $u_n$ has at least two local maximum points on $\G_{\lambda_n}$. This would imply that the superlevel set $\left\{x\in\G_{\lambda_n}\,:\,u_n>\alpha_2\right\}$ has at least two connected components, and by the definition of $\overline{A}$ this would give $\JJ_{6,\G_{{\lambda_n}}}(1)=J_{6,\lambda_n}(u_n,\G_{\lambda_n})=\frac13\|u_n\|_{L^6(\G_{\lambda_n})}^6\geq \frac23\|u_n\|_{L^6(\overline{A})}^6\to\frac23\|\phi_1\|_{L^6(\R)}^6=2\JJ_{6,\R}(1)$ as $n\to\infty$, which is impossible by \eqref{eq:estR1}.

The uniqueness of the local maximum point of $u_\lambda$ ensures that 
\begin{equation}
\label{eq:max}
\max_{\G_\lambda\setminus\overline{A}}u_\lambda=\max_{\partial\overline{A}}u_\lambda\,,
\end{equation}
because if this were not the case $u_\lambda$ would have at least another local maximum point in $\G_\lambda\setminus\overline{A}$. By \eqref{eq:maxu}, \eqref{eq:alpha2} and the fact that $u_\lambda\equiv\alpha_2$ on $\partial\overline{A}$, it then follows that $\partial\overline{A}$ contains at least one vertex $\overline\vv$ of $\G_\lambda$, where $u_\lambda(\overline\vv)=\alpha_2$. Moreover, denoting by $(e_i)_{i=1}^{\deg(\overline\vv)}$ the edges of $\G_\lambda$ incident at $\overline\vv$, with $e_1$ being the unique one containing $\overline{A}$, there results 
\begin{equation}
\label{eq:der}
\begin{split}
	\frac{du_\lambda}{dx_{e_1}}(\overline\vv)>0\quad\text{and}\quad\frac{du_\lambda}{dx_{e_i}}(\overline\vv)<0\qquad\forall i=2,\dots,\deg(\overline\vv)\,.
\end{split}
\end{equation}
The first inequality follows by the monotonicity of $u_\lambda$ on $\overline{A}$, whereas the second one is a consequence of \eqref{eq:alpha2} and \eqref{eq:max} (that would be trivially violated if $u_\lambda$ were increasing at $\overline\vv$ along any edge incident at it other than $e_1$).

Now, for every $i=1,\dots, \deg(\overline\vv)$, let $x_i\in e_i$ be such that $\displaystyle \min_{e_i}u_\lambda=u_\lambda(x_i)$ (such $x_i$ is unique by the uniqueness of the local maximum point of $u_\lambda$ on $\G_\lambda$). Then, 
\begin{equation}
\label{eq:dist}
d_{e_i}(\overline\vv,x_i)\geq\frac{|e_i|}2\qquad\forall i=1,\dots,\deg(\overline\vv)\,,
\end{equation}
where $d_{e_i}(\overline\vv,x_i)$ denotes the distance between $\overline\vv$ and $x_i$ along $e_i$. Indeed, if $i\geq2$, since $u_\lambda$ satisfies \eqref{eq:NLSul}, by \eqref{eq:der} it is monotone decreasing on $e_i$ from $\overline\vv$ to $x_i$ and then monotone increasing from $x_i$ to the other vertex $\vv_i$ of $e_i$ (if this does not coincide with $x_i$). Therefore, if \eqref{eq:dist} were false for any $i\geq2$,  by the periodicity of the orbits in the phase-plane associated to \eqref{eq:NLSul} we would have $u_\lambda(\vv_i)>u_\lambda(\overline\vv)=\alpha_2$, which is impossible by \eqref{eq:max}. The same argument works when $i=1$, recalling that the restriction of $u_\lambda$ to $e_1$ is increasing from $\overline\vv$ to a local maximum point, decreasing from this maximum point to $x_1$ and then increasing again to the end of $e_1$ if $x_1\neq\vv_1$.

\smallskip
{\em Step 4: exponential decay of action ground states.} With the notation of the previous step, we now prove that there exist constants $c_1,c_2>0$, independent of $\lambda$, such that
\begin{equation}
\label{eq:expdec}
\max_{1\leq i \leq \deg(\overline\vv)}u_\lambda(x_i)\leq c_1e^{-c_2 \sqrt{\lambda}}\qquad\text{as }\lambda\to\infty
\end{equation}
uniformly on the set of action ground states in $\NN_{6,1}(\G_\lambda)$. To this end, fix $\varepsilon\in(0,1/4)$ and $R=2\log(C_0/\varepsilon)$, where $C_0>0$ is chosen such that $\phi_1(x)\leq C_0e^{-x}$ on $\R^+$ (such a $C_0$ exists by \eqref{eq:phi1}). By the $C_{\text{\normalfont loc}}^2$ convergence of $u_\lambda$ to $\phi_1$ on $\overline{A}$ (and recalling the identification of $\overline A$ with the interval $(-a_\lambda,a_\lambda)$), for sufficiently large $\lambda$ we have $R<a_\lambda$ and
\begin{equation}
\label{eq:eps1}
0<u_\lambda(x)\leq \|u_\lambda-\phi_1\|_{C^2(0,R)}+\phi_\lambda(x)\leq 2\varepsilon\qquad\forall x\in\left[\frac R2,R\right].
\end{equation}
With no loss of generality, let the vertex $\overline\vv\in\partial\overline A$ correspond to the endpoint $-a_\lambda$ of the interval $(-a_\lambda,a_\lambda)$. By definition, the point $x_1$ then belongs to the portion of the edge $e_1$ on the right of the point $\partial\overline A\setminus\left\{\overline\vv\right\}$, so that the part of $e_1$ starting at $\partial\overline A\setminus\left\{\overline\vv\right\}$ and ending at $x_1$ can be identified with the interval $[a_\lambda, b_{1,\lambda}]$, for some $b_{1,\lambda}\geq a_\lambda$. Hence, we can identify the interval $\displaystyle\left[\frac R2,b_{1,\lambda}\right]$ with the portion of $e_1$ starting at the point of $\overline A$ corresponding to $\frac R2$ and ending at $x_1$. By the monotonicity of $u_\lambda$ on $e_1$, \eqref{eq:eps1} ensures that
\begin{equation}
	\label{eq:eps2}
0<u_\lambda(x)\leq 2\varepsilon\qquad\forall x\in\left[\frac R2,b_{1,\lambda}\right]\cap e_1\,.
\end{equation}
Similarly, identifying for every $i\geq2$ the interval $[0,b_{i,\lambda}]$ with the portion of $e_i$ starting at $\overline\vv$ and ending at $x_i$, by \eqref{eq:eps1} and the monotonicity of $u_\lambda$ on $e_i$ we obtain
\[
0<u_\lambda(x)\leq2\varepsilon\qquad\forall x\in[0,b_{i,\lambda}]\cap e_i,\,\forall i\geq2\,.
\]
Notice that, by \eqref{eq:dist} and the definition of $\G_\lambda$, 
\[
\min_{1\leq i\leq\deg(\overline{\vv})}b_{i,\lambda}>\frac13\min_{1\leq i\leq\deg(\overline\vv)}|e_i|\geq \frac13\min_{e\in\E_\G}|e|\sqrt\lambda\,.
\]
In particular, there exists a constant $C>0$, independent of $\lambda$, such that, for every $i=1,\dots,\deg(\overline\vv)$, the restriction of $u_\lambda$ to the portion of $e_i$ of length $C\sqrt\lambda$ ending at $x_i$ is monotone decreasing and satisfies $0<u_\lambda\leq 2\varepsilon$. 

Fix now any $i\in\left\{1,\dots,\deg(\overline\vv)\right\}$ and think of this restriction of $u_\lambda$ on the corresponding  portion of $e_i$ as a decreasing function on the interval $[0,C\sqrt\lambda]$. Letting $u$ be the action ground state in $\NN_{6,\lambda}(\G)$ associated to $u_\lambda$ by \eqref{eq:ul}, the corresponding restriction of $u$ verifies
\begin{equation}
\label{eq:u<eps}
0<u(x)=\lambda^\frac14 u_\lambda\big(\sqrt{\lambda}\,x\big)\leq 2\lambda^\frac14\varepsilon\qquad\forall x\in\left[0,C\right], 
\end{equation}
it is monotone decreasing and satisfies $u''+u^5=\lambda u$ on $[0,C]$, that together with $\varepsilon<1/4$ yield
\begin{equation}
\label{eq:inequ}
u''=\lambda u-u^5=\lambda u(1-u^4/\lambda)\geq\lambda u\left(1-(2\varepsilon)^4\right)>\frac{\lambda}2u\quad\text{on }\left[0,C\right].
\end{equation}
Let then $\psi$ be the solution of the boundary value problem
\[
\begin{cases}
\psi''=\frac{\lambda}{2}\psi & \text{on }\left[0,C\right]\\
\psi(0)=u(0), & \psi(C)=u(C)\,.
\end{cases}
\]
Setting $w:=\psi-u$ on $[0,C]$ and coupling with \eqref{eq:inequ} gives
\[
\begin{cases}
w''<\frac{\lambda}2 w & \text{on }\left[0,C\right]\\
w(0)=w(C)=0\,,
\end{cases}
\]
so that the maximum principle implies $\psi\geq u$ on $\left[0,C\right]$. Since $\psi$ is explicitly given by
\[
\psi(x)=\frac{u(0)}{e^{2C\sqrt{\frac\lambda2}}-1}\left(e^{\sqrt{\frac\lambda2}\left(2C-x\right)}-e^{\sqrt{\frac\lambda2}\,x}\right)+\frac{u(C)}{e^{2C\sqrt{\frac\lambda2}}-1}\left(e^{\sqrt{\frac\lambda2}(C+x)}-e^{\sqrt{\frac\lambda2}(C-x)}\right),
\]
the monotonicity of $u$ on $[0,C]$, and the fact that $u(0)=\lambda^\frac14u_\lambda(0)\leq\lambda^\frac14\|u_\lambda\|_\infty=O\left(\lambda^\frac14\right)$ as $\lambda\to\infty$, entails
\[
u(C)<u(C/2)\leq\psi(C/2)\leq 4u(0)e^{-\frac C2\sqrt{\frac\lambda2}}\leq c_1\lambda^\frac14 e^{-\frac C2\sqrt{\frac\lambda2}}
\]
as soon as $\lambda$ is large enough. By \eqref{eq:ul}, it follows that $u_\lambda(x_i)=u_\lambda\big(C\sqrt\lambda\big)\leq c_1 e^{-\frac C2\sqrt{\frac\lambda2}}$,  that is the desired estimate for each given $i$. As the above computations are all uniform on $i=1,\dots,\deg(\overline\vv)$ and on the set of action ground states $u$ in $\NN_{6,\lambda}(\G)$, this proves \eqref{eq:expdec}.

\smallskip
{\em Step 5: estimate from below of $\JJ_{6,\G_{{\lambda}}}(1)$ and conclusion of the proof of \eqref{eq:levGl}--\eqref{eq:massGl}.} 
To complete the proof of \eqref{eq:levGl}, in view of the upper bound \eqref{eq:estR1} it is enough to show that there exists $\beta>0$ independent of $\lambda$ such that
\begin{equation}
	\label{eq:JGllower}
	\JJ_{6,\G_{{\lambda}}}(1)\geq\JJ_{6,\R}(1)-O\left(e^{-\beta\sqrt{\lambda}}\right)\qquad\text{as }\lambda\to\infty\,.
\end{equation}
To this end, given an action ground state $u_\lambda$ in $\NN_{6,1}(\G_\lambda)$, we denote by $\widetilde{\G}_\lambda$ the graph obtained attaching at the point $x_{\overline i}$ of $\G_\lambda$ two new bounded edges $p_1,p_2$, each of length 1, with $\overline i\in\left\{1,\dots,\deg(\overline\vv)\right\}$ such that $\displaystyle u_\lambda(x_{\overline i})=\max_{1\leq i\leq\deg(\overline\vv)}u_\lambda(x_i)$. Consider then the function $v_\lambda:\widetilde{\G}_\lambda\to\R$ defined as
\[
v_\lambda(x):=\begin{cases}
u_\lambda(x) & \text{if }x\in\G_\lambda\\
u_\lambda(x_{\overline i})(1-x) & \text{if }x\in p_1\\
u_\lambda(x_{\overline i})(1-x) & \text{if }x\in p_2\,,
\end{cases}
\]
where with a slight abuse of notation we identify in the obvious way corresponding points in $\G_\lambda$ and $\widetilde{\G}_\lambda$, and both $p_1,p_2$ with the interval $[0,1]$ so that $0$ is associated to the point of $\G_\lambda$ the two additional edges are attached at. By construction, $v_\lambda\in H^1(\widetilde{\G}_\lambda)$ and
\begin{equation}
\label{eq:normvl}
\begin{split}
&\|v_\lambda\|_{L^2\left(\widetilde{\G}_\lambda\right)}^2=\|u_\lambda\|_{L^2\left(\G_\lambda\right)}^2+\frac23u_\lambda(x_{\overline i})^2\\
&\|v_\lambda\|_{L^6\left(\widetilde{\G}_\lambda\right)}^6=\|u_\lambda\|_{L^6\left(\G_\lambda\right)}^6+\frac27u_\lambda(x_{\overline i})^6\\
&\|v_\lambda'\|_{L^2\left(\widetilde{\G}_\lambda\right)}^2=\|u_\lambda'\|_{L^2\left(\G_\lambda\right)}^2+2u_\lambda(x_{\overline i})^2\,,
\end{split}
\end{equation}
so that by \eqref{eq:expdec} (and the fact that $\|u_\lambda\|_{L^6(\G_\lambda)}$ is bounded away from 0 by Step 3)
\begin{equation}
\label{eq:svl}
\sigma_{6,1}(v_\lambda)^6=\left(\frac{\|u_\lambda'\|_2^2+\|u_\lambda\|_2^2+\frac83u_\lambda(x_{\overline i})^2}{\|u_\lambda\|_6^6+\frac27u_\lambda(x_{\overline i})^6}\right)^\frac32=1+\frac4{\|u_\lambda\|_6^6}u_\lambda(x_{\overline i})^2+o\left(u_\lambda(x_{\overline i})^2\right)\qquad\text{as }\lambda\to\infty.
\end{equation}
Furthermore, observe that $u_\lambda$ (and thus $v_\lambda$) attains all the values in $(u_\lambda(x_{\overline i}),\|u_\lambda\|_\infty)$ at least twice on $\G_\lambda$. Indeed, all the values in $[\alpha_2,\|u_\lambda\|_\infty)$ are attained exactly twice on the edge $e_1$, and each value in $(u_\lambda(x_{\overline i}),\alpha_2)$ (if any) is attained at least once on each edge incident at $\overline\vv$ by definition of $\overline i$ (and therefore at least three times on $\G_\lambda$ since $\deg(\overline\vv)\geq3$). Since by construction $v_\lambda$ attains all the values in $[0,u_\lambda(x_{\overline i}))$ exactly twice on $p_1\cup p_2$, we then have that $v_\lambda$ attains all the values in $(0,\|v_\lambda\|_\infty)$ at least twice on $\widetilde{\G}_\lambda$. Hence, by the standard theory of rearrangements on metric graphs (see e.g. \cite[Section 2]{DDGS}), if $w_\lambda\in H^1(\R)$ denotes the symmetric rearrangement on the real line of $\sigma_{6,1}(v_\lambda)v_\lambda$, then $\sigma_{6,1}(w_\lambda)\leq 1$ (since $\sigma_{6,1}(v_\lambda)v_\lambda\in\NN_{6,1}(\widetilde{\G}_\lambda)$), in turn yielding
\begin{equation}
\label{eq:wl}
\JJ_{6,\R}(1)\leq J_{6,1}(\sigma_{6,1}(w_\lambda)w_\lambda,\R)=\frac13\sigma_{6,1}(w_\lambda)^6\|w_\lambda\|_{L^6(\R)}^6\leq \frac13\sigma_{6,1}(v_\lambda)^6\|v_\lambda\|_{L^6(\widetilde{\G}_\lambda)}^6
\end{equation}
because $\sigma_{6,1}(w_\lambda)w_\lambda\in\NN_{6,1}(\R)$ by definition and $\|w_\lambda\|_{L^6(\R)}=\|\sigma_{6,1}(v_\lambda)v_\lambda\|_{L^6(\widetilde{\G}_\lambda)}$ by the equimeasurability of rearrangements. As $\lambda\to\infty$, coupling with \eqref{eq:normvl} and \eqref{eq:svl} leads to
\[
\begin{split}
\JJ_{6,\R}(1)\leq&\, \frac13\left(1+\frac4{\|u_\lambda\|_6^6}u_\lambda(x_{\overline i})^2+o\left(u_\lambda(x_{\overline i})^2\right)\right)\left(\|u_\lambda\|_{6}^6+\frac27u_\lambda(x_{\overline i})^6\right)\\
=&\,\frac13\|u_\lambda\|_6^6+\frac43u_\lambda(x_{\overline i})^2+o\left(u_\lambda(x_{\overline i})^2\right)=\JJ_{6,\G_{{\lambda}}}(1)+\frac43u_\lambda(x_{\overline i})^2+o\left(u_\lambda(x_{\overline i})^2\right)\,,
\end{split}
\]
that together with \eqref{eq:expdec} yields \eqref{eq:JGllower}. 

To conclude, we are thus left to prove \eqref{eq:massGl}. Observe that, since in the limit for $\lambda\to\infty$ all inequalities in \eqref{eq:wl} become equalities, $(\sigma_{6,1}(w_\lambda)w_\lambda)_\lambda\subset\NN_{6,1}(\R)$ is a minimizing sequence for $\JJ_{6,\R}(1)$ with $\|\sigma_{6,1}(w_\lambda)w_\lambda\|_\infty=\sigma_{6,1}(w_\lambda)w_\lambda(0)$, and it thus converges strongly in $H^1(\R)$ to $\phi_1$ as $\lambda\to\infty$. In particular, $\|\sigma_{6,1}(w_\lambda)w_\lambda\|_{L^2(\R)}^2\to\|\phi_1\|_{L^2(\R)}^2=\mu_\R$ as $\lambda\to\infty$. Furthermore, \eqref{eq:svl}, the equalities in \eqref{eq:wl}, and the equimeasurability of rearrangements give
\[
\begin{split}
\lim_{\lambda\to\infty}\|v_\lambda\|_{L^6(\widetilde{\G}_\lambda)}^6&\,=\lim_{\lambda\to\infty}\sigma_{6,1}(v_\lambda)^6\|v_\lambda\|_{L^6(\widetilde{\G}_\lambda)}^6=\lim_{\lambda\to\infty}\sigma_{6,1}(w_\lambda)^6\|w_\lambda\|_{L^6(\R)}^6\\
&\,=\lim_{\lambda\to\infty}\sigma_{6,1}(w_\lambda)^6\sigma_{6,1}(v_\lambda)^6\|v_\lambda\|_{L^6(\widetilde{\G}_\lambda)}^6=\lim_{\lambda\to\infty}\sigma_{6,1}(w_\lambda)^6\|v_\lambda\|_{L^6(\widetilde{\G}_\lambda)}^6\,,
\end{split}
\] 
in turn implying $\displaystyle\lim_{\lambda\to\infty}\sigma_{6,1}(w_\lambda)=1$, as $\displaystyle\lim_{\lambda\to\infty}\|v_\lambda\|_{L^6(\widetilde{\G}_\lambda)}^6>0$ again by \eqref{eq:wl}. Hence, exploiting once more the equimeasurability of $w_\lambda$ and $\sigma_{6,1}(v_\lambda)v_\lambda$, together with \eqref{eq:normvl}, we get
\[
\mu_\R=\lim_{\lambda\to\infty}\|\sigma_{6,1}(w_\lambda)w_\lambda\|_{L^2(\R)}^2=\lim_{\lambda\to\infty}\|w_\lambda\|_{L^2(\R)}^2=\lim_{\lambda\to\infty}\sigma_{6,1}(v_\lambda)^2\|v_\lambda\|_{L^2(\widetilde{\G}_\lambda)}^2=\lim_{\lambda\to\infty}\|u_\lambda\|_{L^2(\G_\lambda)}^2\,,
\]
i.e. \eqref{eq:massGl}.

\smallskip
The proof of \eqref{eq:critmasR+} and \eqref{eq:JGJR+} for graphs with at least one pendant is completely analogous, one simply needs to replace the levels on $\R$ with the corresponding ones on $\R^+$, to note that Steps 2-3 as above will prove convergence of action ground states on $\G_\lambda$ to the half-soliton (i.e. the restriction of $\phi_1$ to $\R^+$) on one pendant of the graph, and to make use of monotone rearrangements in place of symmetric ones in Step 5. 
\end{proof}

\section{Proof of Theorem \ref{thm:compact}}
\label{sec:comp}

In this section we complete the proof of our main non-uniqueness result on compact graphs, Theorem \ref{thm:compact}.
We begin with the next preliminary lemma.
\begin{lemma}
	\label{lem:l1>l2}
	Let $\G$ be a compact graph. If there are numbers $0<\lambda_1<\lambda_2$ such that $M_{6,\G}^-(\lambda_1)>M_{6,\G}^+(\lambda_2)$, then there exists $p_\G>2$ such that $Z_{p,\G}\neq\emptyset$ for every $p\in[p_\G,6)$.
\end{lemma}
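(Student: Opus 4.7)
The plan is to argue by contradiction: if the conclusion fails, then one can extract a sequence $p_n \nearrow 6$ with $Z_{p_n,\G}=\emptyset$ for every $n$. I will show that $Z_{p_n,\G}=\emptyset$ forces the monotonicity bound $M_{p_n,\G}^-(\lambda_1)\leq M_{p_n,\G}^+(\lambda_2)$ for each $n$, and then pass to the limit via the semicontinuity in $p$ from Remark \ref{rem:comp}:
\begin{equation*}
M_{6,\G}^-(\lambda_1)\leq\liminf_{n\to\infty}M_{p_n,\G}^-(\lambda_1)\leq\limsup_{n\to\infty}M_{p_n,\G}^+(\lambda_2)\leq M_{6,\G}^+(\lambda_2),
\end{equation*}
contradicting the hypothesis $M_{6,\G}^-(\lambda_1)>M_{6,\G}^+(\lambda_2)$.

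The heart of the argument is the implication ``$Z_{p,\G}=\emptyset\Longrightarrow M_{p,\G}^\pm$ is strictly increasing''. To establish it, I will first show that $Z_{p,\G}=\emptyset$ forces $\mu\mapsto\Lambda_{p,\G}(\mu)$ to be a homeomorphism of $(0,\infty)$ onto itself. Strict monotonicity and the boundary limits $\Lambda_{p,\G}(\mu)\to0$ as $\mu\to0^+$ and $\Lambda_{p,\G}(\mu)\to\infty$ as $\mu\to\infty$ are already recorded in Proposition \ref{prop:Ecomp}, so only continuity remains. If $\Lambda_{p,\G}$ had a jump at some $\mu_0$, I would pick sequences $\mu_k\to\mu_0$ from the left and from the right and extract, via the compact Sobolev embeddings on the compact graph $\G$ and the continuity of $\EE_{p,\G}$ in $\mu$ (itself obtained by a routine sandwich with rescaled test functions), two limit energy ground states at mass $\mu_0$ whose Lagrange multipliers (read off via \eqref{eq:Lu}) coincide with the distinct one-sided limits of $\Lambda_{p,\G}$ at $\mu_0$. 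This would place $\mu_0$ in $Z_{p,\G}$, against the hypothesis.

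Once $\Lambda_{p,\G}$ is a homeomorphism of $(0,\infty)$, Proposition \ref{prop:AEgs} applied to an energy ground state of mass $\mu$ yields $M_{p,\G}^-(\Lambda_{p,\G}(\mu))=M_{p,\G}^+(\Lambda_{p,\G}(\mu))=\mu$ for every $\mu>0$. Surjectivity of $\Lambda_{p,\G}$ then gives $M_{p,\G}^\pm=\Lambda_{p,\G}^{-1}$ on all of $(0,\infty)$, which is strictly increasing; in particular $\lambda_1<\lambda_2$ delivers $M_{p,\G}^-(\lambda_1)<M_{p,\G}^+(\lambda_2)$, closing the contradiction scheme.

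The main obstacle is the continuity step for $\Lambda_{p,\G}$ under $Z_{p,\G}=\emptyset$: it is the only point relying on compactness of $\G$ and on a separate continuity property of $\EE_{p,\G}$ in $\mu$. Both ingredients are standard on compact graphs, but care is required to verify that the weak $H^1$ limits of the energy ground state sequences along $\mu_k\to\mu_0$ are actually \emph{strong} (so that the Lagrange multipliers transfer to the limit through \eqref{eq:Lu}) and to identify these limits as positive energy ground states at mass exactly $\mu_0$.
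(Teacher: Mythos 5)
Your proposal is correct and follows essentially the same route as the paper: the semicontinuity in $p$ of $M_{p,\G}^\pm$ from Remark \ref{rem:comp} near $p=6$, combined with the observation that $Z_{p,\G}=\emptyset$ makes $\Lambda_{p,\G}$ strictly increasing and onto $(0,\infty)$, so that Proposition \ref{prop:AEgs} identifies $M_{p,\G}^\pm=\Lambda_{p,\G}^{-1}$ and produces the contradiction with the hypothesis $M_{6,\G}^-(\lambda_1)>M_{6,\G}^+(\lambda_2)$. The differences are only presentational: the paper fixes a single $\overline p\in(6-\varepsilon,6)$ instead of a sequence $p_n\to 6$, and it asserts the surjectivity of $\Lambda_{\overline p,\G}$ directly from Proposition \ref{prop:Ecomp}, whereas you spell out the (correct, and indeed needed) compactness argument giving continuity of $\Lambda_{p,\G}$ under $Z_{p,\G}=\emptyset$, which is a welcome elaboration of the same step rather than a different method.
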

\begin{proof}
	Set $\delta:=M_{6,\G}^-(\lambda_1)-M_{6,\G}^+(\lambda_2)>0$. By Remark \ref{rem:comp}, there exists $\varepsilon>0$ such that
	\begin{equation}
	\label{eq:absM}
	\inf_{p\in(6-\varepsilon,6)}M_{p,\G}^-(\lambda_1)\geq M_{6,\G}^-(\lambda_1)-\frac{\delta}3>M_{6,\G}^+(\lambda_2)+\frac{\delta}3\geq\sup_{p\in(6-\varepsilon,6)}M_{p,\G}^+(\lambda_2)\,.
	\end{equation}
	To prove the lemma, assume then by contradiction that there exists $\overline{p}\in(6-\varepsilon,6)$ such that $Z_{\overline{p},\G}=\emptyset$. By Proposition \ref{prop:Ecomp} this means that the corresponding map $\Lambda_{\overline{p},\G}:\R^+\to\R^+$ is strictly increasing and surjective, so that for every $\lambda>0$ there exists a unique value $\mu(\lambda)>0$ and a ground state $u$ of $E_{\overline{p}}(\cdot,\G)$ with mass $\mu(\lambda)$ satisfying $\LL_{\overline{p}}(u,\G)=\lambda$. By Proposition \ref{prop:AEgs}, this implies that all action ground states in $\NN_{\overline{p},\lambda}(\G)$ have mass equal to $\mu(\lambda)$, namely $M_{\overline{p},\G}^+(\lambda)=\mu(\lambda)=M_{\overline{p},\G}^-(\lambda)$ for every $\lambda>0$.
	
	In particular, there are $\mu_1,\mu_2>0$ and $u_1\in H_{\mu_1}^1(\G)$, $u_2\in H_{\mu_2}^1(\G)$ such that $u_1$ is a ground state of $E_{\overline{p}}(\cdot,\G)$ with mass $\mu_1$ and $\LL_{\overline{p}}(u_1,\G)=\Lambda_{\overline{p},\G}(\mu_1)=\lambda_1$, and $u_2$ is a ground state of $E_{\overline{p}}(\cdot,\G)$ with mass $\mu_2$ and $\LL_{\overline{p}}(u_2,\G)=\Lambda_{\overline{p},\G}(\mu_2)=\lambda_2$. On the one hand, the strict monotonicity of $\Lambda_{\overline{p},\G}$ yields $\mu_1<\mu_2$. On the other hand, since $u_1$ and $u_2$ are also action ground states in $\NN_{\overline{p},\lambda_1}(\G)$ and $\NN_{\overline{p},\lambda_2}(\G)$ respectively, there results $M_{\overline{p},\G}^\pm(\lambda_1)=\mu_1$ and $M_{\overline{p},\G}^\pm(\lambda_2)=\mu_2$, that combined with \eqref{eq:absM} entails $\mu_1>\mu_2$. As this provides the desired contradiction, we conclude.
\end{proof}
\begin{proof}[Proof of Theorem \ref{thm:compact}]
	Let us first consider compact graphs $\G$ with no pendant. By Lemma \ref{lem:l1>l2}, to prove Theorem \ref{thm:compact} it is enough to verify that there exist $\lambda_1<\lambda_2$ such that $M_{6,\G}^-(\lambda_1)>M_{6,\G}^+(\lambda_2)$. To this end, recall that, when $p=6$ and $\G$ has no pendant, energy ground states exist for every $\mu\leq\mu_\R$ (see Proposition \ref{prop:Ecomp}). In particular, there exists a ground state $\overline{u}\in H_{\mu_\R}^1(\G)$ of $E_6(\cdot,\G)$ with mass $\mu_\R$ and $\LL_6(\overline{u},\G)=\overline{\lambda}$, for some $\overline\lambda>0$. By Proposition \ref{prop:AEgs}, it then follows that all action ground states in $\NN_{6,\overline\lambda}(\G)$ have mass $\mu_\R$, i.e. $M_{6,\G}^\pm(\overline{\lambda})=\mu_\R$. Furthermore, by Proposition \ref{prop:est6}, $\displaystyle\lim_{\lambda\to\infty}M_{6,\G}^\pm(\lambda)=\mu_\R$. Hence, to complete the proof we are left to show that there exists $\widetilde{\lambda}>\overline{\lambda}$ such that $M_{6,\G}^-(\widetilde{\lambda})=M_{6,\G}^+(\widetilde{\lambda})\neq\mu_\R$.
	
	Assume by contradiction that this is false, namely that $M_{6,\G}^\pm(\lambda)=\mu_\R$ for every $\lambda\in\left[\overline{\lambda},\infty\right)\setminus \widetilde{Z}_{6,\G}$ (the set $\widetilde{Z}_{6,\G}$ is defined in Proposition \ref{prop:Acomp}). Since $\JJ_{6,\G}$ is locally Lipschitz continuous and differentiable out of the at most countable set $\widetilde{Z}_{6,\G}$, Proposition \ref{prop:Acomp} yields
	\[
	\JJ_{6,\G}(\lambda)=\JJ_{6,\G}(\overline{\lambda})+\int_{\overline{\lambda}}^{\lambda}\JJ_{6,\G}'(s)\,ds=\JJ_{6,\G}(\overline{\lambda})+\frac{\mu_\R}2\left(\lambda-\overline{\lambda}\right)
	\]
	for every $\lambda\in\left[\,\overline{\lambda},\infty\right)\setminus\widetilde{Z}_{6,\G}$, and thus everywhere on $\left[\,\overline{\lambda},\infty\right)$ by the continuity of $\JJ_{6,\G}$. By Remark \ref{rem:lev6R} and Proposition \ref{prop:est6}, it then follows
	\[
	\left|\JJ_{6,\G}(\lambda)-\JJ_{6,\R}(\lambda)\right|=\left|\JJ_{6,\G}(\overline{\lambda})-\frac{\mu_\R}2\overline{\lambda}\right|=o\left(e^{-\gamma\sqrt{\lambda}}\right)\qquad\text{as }\lambda\to\infty
	\]
	that is $\displaystyle \JJ_{6,\G}(\overline{\lambda})-\frac{\mu_\R}2\overline{\lambda}=0$. Since $\LL_6(\overline{u},\G)=\overline{\lambda}$ and $\JJ_{6,\G}(\overline{\lambda})=J_{6,\overline{\lambda}}(\overline{u},\G)$ for the ground state $\overline{u}\in H_{\mu_\R}^1(\G)$ of $E_6(\cdot,\G)$ with mass $\mu_\R$, this entails $\EE_{6,\G}(\mu_\R)=E_6(\overline{u},\G)=\JJ_{6,\G}(\overline{\lambda})-\frac{\mu_\R}2\overline{\lambda}=0$, contradicting the strict negativity of $\EE_{6,\G}(\mu_\R)$ ensured by Proposition \ref{prop:Ecomp}.
	
	\smallskip
	The proof for graphs with at least one pendant is identical, simply replacing $\mu_\R$ with $\mu_{\R^+}$.
\end{proof}

\begin{remark}
	\label{rem:Dircomp}
	As the previous proof shows, the existence of $\lambda_1,\lambda_2$ as in Lemma \ref{lem:l1>l2} on compact graphs (and thus the validity of Theorem \ref{thm:compact}) is a consequence of the fact that $L^2$-critical energy ground states exist if and only if the mass is smaller than or equal to a threshold value. Observe that this is not what happens in the $L^2$-critical regime in similar settings, as e.g. on bounded domains of $\R^N$ with homogeneous Dirichlet conditions at the boundary. In this latter case, ground states exist if and only if the mass is strictly smaller than a threshold (see e.g. \cite[Theorem 1.5(ii)]{NTV}). Hence, since ground states at the threshold are crucial in the proof of Theorem \ref{thm:compact}, we do not expect to observe the same non-uniqueness phenomenon in the context of bounded domains with homogeneous Dirichlet boundary conditions.
\end{remark}

\section{Noncompact graphs with finitely many edges: proof of Theorem \ref{thm:noncomp}}
\label{sec:Ghalf}

This section focuses on noncompact graphs with finitely many edges and shows how to prove Theorem \ref{thm:noncomp}. Recall that this theorem takes into account a limited set of graphs: the tadpole (Figure \ref{fig-NlT}(\textsc{\tiny A})), the signpost (Figure \ref{fig:GShalf}(\textsc{\tiny A})), the 2-fork (Figure \ref{fig:GShalf}(\textsc{\tiny B})) and the 3-fork (Figure \ref{fig:GShalf}(\textsc{\tiny C})). Thanks to their high specificity,  it is thus not difficult to adapt the argument in the proof of Theorem \ref{thm:compact} to obtain Theorem \ref{thm:noncomp}. However, and even though the main differences arise with respect to the discussion in Section 4, for the sake of completeness we now comment upon all the results of Sections \ref{sec:prel}--\ref{sec:est6}--\ref{sec:comp}, highlighting the modifications needed for our analysis to extend.

Proposition \ref{prop:Ecomp}(i) does not change (see \cite{AST2} for further details on $L^2$-subcritical energy ground states on the graphs considered here), whereas Proposition \ref{prop:Ecomp}(ii) is quite different. Indeed, \cite[Theorems 3.1--3.3--3.4]{ASTcmp} show that when $p=6$
\begin{itemize}
	\item if $\G$ is either the tadpole or the signpost graph, then 
	\[
	\EE_{6,\G}(\mu)\begin{cases}
		=0 & \text{if }\mu\leq\mu_\G\\
		<0 & \text{if }\mu_\G<\mu\leq\mu_\R\\
		=-\infty & \text{if }\mu>\mu_\R\,.
	\end{cases}
	\]
	Moreover, if $\G$ is the tadpole, then $\mu_\G=\mu_{\R^+}$ and $\EE_{6,\G}(\mu)$ is attained if and only if $\mu\in(\mu_{\R^+},\mu_\R]$, whereas if $\G$ is the signpost, then $\mu_{\R^+}<\mu_\G<\mu_\R$ and $\EE_{6,\G}(\mu)$ is attained if and only if $\mu\in[\mu_\G,\mu_\R]$;
	
	\item if $\G$ is either the 2- or the 3-fork graph, then
	\[
	\EE_{6,\G}(\mu)=\begin{cases}
		0 & \text{if }\mu\leq\mu_{\R^+}\\
		-\infty & \text{if }\mu>\mu_{\R^+}
	\end{cases}
	\]
	and it is never attained.
\end{itemize}
Finally, all properties of $\Lambda_{p,\G}^\pm$ listed in Proposition \ref{prop:Ecomp} are unchanged. The proof is almost the same, the only difference arising with the asymptotic formulas \eqref{eq:lim}, that for noncompact graphs with finitely many edges directly follow by \eqref{eq:LE} and the a priori estimates on energy ground states given in \cite[Lemma 2.6]{AST2}.

Proposition \ref{prop:Acomp} is identical, but minor comments should be given on how to prove it. The positivity of $\JJ_{p,\G}(\lambda)$ and the fact that it is attained for every $\lambda>0$ has been proved e.g. in \cite{DDGST}, from where it is also easy to see that $\JJ_{p,\G}(\lambda)\to0$ as $\lambda\to0^+$. That $\JJ_{p,\G}(\lambda)=0$ for every $\lambda\leq0$ follows by \eqref{eq:JonN}, the inequality $\JJ_{p,\G}(\lambda)\leq \JJ_{p,\R}(\lambda)$ (that holds for every graph with at least one half-line), and the fact $\JJ_{p,\R}(\lambda)=0$ for non-positive $\lambda$. As for the properties of $M_{p,\G}^\pm$, there is only a slight difference when proving that they are attained for every $\lambda>0$. As Step 3 of the proof of Proposition \ref{prop:Acomp} makes evident, this boils down to show that any sequence $(u_n)_n$ of action ground states in $\NN_{p,\lambda}(\G)$ converges strongly in $H^1(\G)$ (up to subsequences) to another action ground state $u\in\NN_{p,\lambda}(\G)$. Since the non-compactness of $\G$ is given here only by its half-lines, it is not difficult to see that it is then enough to show that the restriction of $u_n$ to each half-line $\mathcal{H}$ of $\G$ converges strongly. Once this is done, the compactness of the rest of $\G$ allows to recover the desired convergence of action ground states to action ground states. To prove the strong convergence on any half-line $\mathcal{H}$ of $\G$, recall that, if $u_n$ is a positive action ground state in $\NN_{p,\lambda}(\G)$, its restriction to $\mathcal{H}$ coincides with the restriction to a suitable interval $[a_n,\infty)$ of the unique positive $H^1$ solution $v$ of the problem
\[
\begin{cases}
	v''+v^{p-1}=\lambda v & \text{on }\R\\
	v(0)=\|v\|_{\infty}\,.
\end{cases}
\]
Since it is well-known that such a solution is symmetric, monotone decreasing on $[0,\infty)$ and satisfies $J_{p,\lambda}(v,\R)=\JJ_{p,\R}(\lambda)$, the strong convergence of $u_n$ on $\mathcal{H}$ follows if we prove that $\displaystyle \liminf_{n\to\infty} a_n>-\infty$. This latter inequality is guaranteed by the fact that $J_{p,\lambda}(u_n,\G)=\JJ_{p,\G}(\lambda)<\JJ_{p,\R}(\lambda)$ for every $\lambda>0$ and every noncompact graph we are dealing with (see e.g. \cite[Section 4]{DDGST}). Indeed, if along a subsequence (not relabeled) it were $\displaystyle \lim_{n\to\infty} a_n=-\infty$, we would have $J_{p,\lambda}(u_n,\G)\geq J_{p,\lambda}(u_n,\mathcal{H})\to J_{p,\lambda}(v,\R)=\JJ_{p,\R}(\lambda)$ as $n\to\infty$, i.e. a contradiction. 

Observe that, with the same consideration, one retrieves also the content of Remark \ref{rem:comp}. Indeed, for any fixed $\lambda>0$ and $p_n\to \overline{p}\in(2,\infty)$ as $n\to\infty$, arguing as we just did we also obtain the strong convergence of any sequence of action ground states $u_n\in\NN_{p_n,\lambda}(\G)$ on each half-line of $\G$. Since this implies in particular that the $L^\infty$ norm of $u_n$ on any half-line of $\G$ is attained at a uniformly bounded distance from the compact core of the graph (i.e. the set of all its bounded edges), we can rely again on the Dominated Convergence Theorem to recover \eqref{eq:ppbar}, and the rest of Remark \ref{rem:comp} then works exactly as in Section \ref{sec:prel}.

Proposition \ref{prop:AEgs} does not require any discussion, as it extends with no change to noncompact graphs with finitely many edges, and the same is true for the whole analysis of Section \ref{sec:est6} (no modification is needed in the proof of Proposition \ref{prop:est6}, since all graphs we are considering have finitely many vertices and edges and any function in $H^1(\G)$ tends to zero along the half-lines of $\G$).

Since the proof of Lemma \ref{lem:l1>l2} is based on Remark \ref{rem:comp}, Proposition \ref{prop:AEgs} and the $L^2$-subcritical part of Proposition \ref{prop:Ecomp}, it extends verbatim to the noncompact graphs we are taking into account here. Hence, to complete the proof of Theorem \ref{thm:noncomp}, we are left to show that, for each of the graphs covered by the theorem, there exist $0<\lambda_1<\lambda_2$ such that $M_{6,\G}^-(\lambda_1)>M_{6,\G}^+(\lambda_2)$.  

When $\G$ is either the tadpole or the signpost, the proof of this fact is identical to the one developed in Section \ref{sec:comp}. Indeed, as we said before, on both graphs there exists an $L^2$-critical energy ground state $\overline{u}$ with mass $\mu_\R$ such that $E_6(\overline{u},\G)<0$. Moreover, as Proposition \ref{prop:est6} applies to both graphs and they have no pendant, $\displaystyle\lim_{\lambda\to\infty}M_{6,\G}^\pm(\lambda)=\mu_\R$. Hence, setting again $\overline\lambda:=\LL_6(\overline{u},\G)$, the same computations at the end of Section \ref{sec:comp} concludes the proof of Theorem \ref{thm:noncomp} in the case of the tadpole and the signpost.

\begin{remark}
	Equivalently, the existence of $\lambda_1,\lambda_2$ as above on the tadpole graph can be deduced directly by the results in \cite{NP}.
\end{remark}

It remains to deal with the 2- and 3-fork graphs. To this end, we need the following lemma.

\begin{lemma}
	\label{lem:pend}
	Let $\G$ be a noncompact graph with finitely many edges and exactly one half-line. Assume that action ground states in $\NN_{6,\lambda}(\G)$ exist for every $\lambda>0$. Then for every $\varepsilon>0$ there exists $\widehat\lambda>0$ (depending on $\varepsilon$) such that
	\begin{equation}
	\label{eq:mu1half}
	\mu_{\R^+}-\varepsilon\leq M_{6,\G}^-(\lambda)\leq M_{6,\G}^+(\lambda)\leq \mu_{\R^+}+\varepsilon\qquad\forall \lambda\leq\widehat\lambda\,.
	\end{equation}
\end{lemma}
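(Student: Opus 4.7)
The plan is to adapt the scaling approach underlying Proposition \ref{prop:est6}, but run it in the opposite regime $\lambda \to 0^+$. Using the $L^2$--critical scaling $u_\lambda(x) := \lambda^{-1/4} u(x/\sqrt{\lambda})$ as in \eqref{eq:ul}, one has a bijection between $\NN_{6,\lambda}(\G)$ and $\NN_{6,1}(\sqrt{\lambda}\G)$ preserving $\|\cdot\|_2^2$, and the relation $M_{6,\G}^\pm(\lambda)=M_{6,\sqrt\lambda\G}^\pm(1)$. Since $\G$ has finitely many edges and a unique half-line, $\sqrt\lambda\G$ consists of a half-line together with a compact part of total length $O(\sqrt\lambda)\to 0$, so heuristically $\sqrt\lambda\G$ ``collapses'' to $\R^+$ as $\lambda\to 0^+$. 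The goal is therefore to show that $\JJ_{6,\sqrt\lambda\G}(1)\to\JJ_{6,\R^+}(1)=\mu_{\R^+}/2$ and that the associated action ground states converge, after monotone rearrangement, to the half--soliton $\phi_1|_{\R^+}$, which is the unique positive ground state on $\R^+$.

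For the upper bound $\JJ_{6,\sqrt\lambda\G}(1)\le\JJ_{6,\R^+}(1)+o(1)$, I would take the test function equal to $\phi_1$ on the half-line of $\sqrt\lambda\G$ (identifying the junction vertex with $0$) and to the constant $\phi_1(0)$ on the compact part: since the compact part has total length $O(\sqrt\lambda)$, the $L^2$, $L^6$ and $H^1$-seminorm of this function tend to those of $\phi_1|_{\R^+}$, and after projection on $\NN_{6,1}(\sqrt\lambda\G)$ via \eqref{eq:sigma} the action converges to $\JJ_{6,\R^+}(1)$. For the matching lower bound, I would invoke the monotone decreasing rearrangement $v\mapsto v^*$ on $\R^+$, which on a graph with one half-line preserves every $L^p$-norm and satisfies a Polya–Szego inequality $\|(v^*)'\|_2\le\|v'\|_2$; a rescaling by $\sigma_{6,1}(v^*)\le 1$ gives $\sigma_{6,1}(v^*)v^*\in\NN_{6,1}(\R^+)$ with action no larger than $J_{6,1}(v,\sqrt\lambda\G)$, yielding $\JJ_{6,\sqrt\lambda\G}(1)\ge\JJ_{6,\R^+}(1)$. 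Together these give $\lim_{\lambda\to 0^+}\JJ_{6,\sqrt\lambda\G}(1)=\JJ_{6,\R^+}(1)$.

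To extract the mass estimate \eqref{eq:mu1half}, I would take any sequence $\lambda_n\to 0^+$ and action ground states $v_n\in\NN_{6,1}(\sqrt{\lambda_n}\G)$, so that $(\sigma_n v_n^*)_n\subset\NN_{6,1}(\R^+)$, with $\sigma_n=\sigma_{6,1}(v_n^*)\le 1$, is a monotone minimizing sequence for $\JJ_{6,\R^+}(1)$. Monotonicity and the bound on $\|(\sigma_n v_n^*)'\|_2^2+\|\sigma_n v_n^*\|_2^2$ coming from the Nehari identity and the convergence of $\|v_n\|_6^6$ give, via Helly's selection theorem and weak $H^1$ compactness, convergence (up to subsequences) to a monotone limit that, by uniqueness of positive ground states on $\R^+$, must be $\phi_1|_{\R^+}$. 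The Nehari identity and $\|v_n\|_6^6\to\|\phi_1\|_{L^6(\R^+)}^6$ then force $\sigma_n\to 1$ and strong $H^1$ convergence of $\sigma_n v_n^*$, whence $\|v_n\|_2^2=\|v_n^*\|_2^2\to\mu_{\R^+}$, giving $M^\pm_{6,\sqrt{\lambda_n}\G}(1)\to\mu_{\R^+}$, i.e.\ \eqref{eq:mu1half}. The main obstacle I anticipate is controlling the Polya–Szego deficit, since the lower bound via rearrangement a priori only gives $\sigma_n\le 1$; this is precisely what is rescued by the already-proved convergence of the action levels, which squeezes $\sigma_n$ to $1$ and upgrades weak to strong $H^1$ convergence.
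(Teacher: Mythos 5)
Your proposal is correct and follows essentially the same route as the paper: the scaling to $\G_\lambda=\sqrt{\lambda}\,\G$, the test function equal to $\phi_1$ on the half-line and constant on the collapsing compact core for the upper bound, and the monotone rearrangement onto $\R^+$ combined with the convergence of the action levels to squeeze $\sigma_{6,1}\to1$ and identify the limit with the half-soliton, hence the mass with $\mu_{\R^+}$. The only cosmetic differences are that the paper cites an external result for the lower bound $\JJ_{6,\G_\lambda}(1)\geq\JJ_{6,\R^+}(1)$ (which you rederive via Polya--Szego) and asserts rather than details the strong convergence of the rearranged minimizing sequence, which you justify via monotonicity and uniqueness.
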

\begin{proof}
	Recall that, if $u\in\NN_{6,\lambda}(\G)$ is an action ground state of $J_{6,\lambda}(\cdot,\G)$, then $\displaystyle u_\lambda(x):=\lambda^{-\frac14}u\big(x/\sqrt\lambda\big)$
	belongs to $\NN_{6,1}(\G_\lambda)$ and is an action ground state of $J_{6,1}(\cdot,\G_\lambda)$, with $\G_\lambda:=\sqrt\lambda\G$. Since $\|u\|_{L^2(\G)}=\|u_\lambda\|_{L^2(\G_\lambda)}$ for every $\lambda$, to prove \eqref{eq:mu1half} it is enough to show that
	\begin{equation}
	\label{eq:M61half}
	\lim_{\lambda\to0^+}M_{6,\G_\lambda}^\pm(1)=\mu_{\R^+}\,.
	\end{equation}
	Recall first that, by \cite[Proposition 4.1]{DDGST},
	\begin{equation}
	\label{eq:estbel}
	\JJ_{6,\R^+}(1)\leq \JJ_{6,\G_{{\lambda}}}(1)\qquad\forall\lambda>0\,.
	\end{equation}
	Moreover, consider the function $v:\G_\lambda\to\R$ defined by
	\[
	v(x):=\begin{cases}
	\phi_1(x) & \text{if }x\in\HH\\
	\phi_1(0) & \text{if }x\in\K_\lambda\,,
	\end{cases}
	\]
	where $\phi_1$ is the soliton on the real line given by \eqref{eq:phi1}, $\HH$ is the unique half-line of $\G_\lambda$ (identified as usual with $\R^+$), and $\K_\lambda:=\sqrt\lambda\K$ is the compact core of $\G_\lambda$ (where $\K$ denotes the compact core of $\G$). As $\lambda\to0^+$, since $|\K_\lambda|\to0$ it is evident that 
	\[
	\|v\|_{L^q(\G_\lambda)}\to\|\phi_1\|_{L^q(\R^+)},\qquad\|v'\|_{L^2(\G_\lambda)}\to\|\phi_1'\|_{L^2(\R^+)}\,,
	\]
	so that $\sigma_{6,1}(v)\to\sigma_{6,1}(\phi_1)=1$ and 
	\[
	\JJ_{6,\G_{{\lambda}}}(1)\leq J_{6,1}(\sigma_{6,1}(v)v,\G_\lambda)=\frac13\sigma_{6,1}(v)^6\|v\|_{L^6(\G_\lambda)}^6\to\frac13\|\phi_1\|_{L^6(\R^+)}^6=\JJ_{6,\R^+}(1)\qquad\text{as }\lambda\to0^+\,.
	\]
	Coupling with \eqref{eq:estbel} yields 
	\begin{equation}
	\label{eq:limR+}
	\lim_{\lambda\to0^+}\JJ_{6,\G_{{\lambda}}}(1)=\JJ_{6,\R^+}(1).
	\end{equation}
	Let then $u_\lambda\in\NN_{6,1}(\G_\lambda)$ be such that $\JJ_{6,\G_{{\lambda}}}(1)=J_{6,1}(u_\lambda,\G_\lambda)$, and let $u_\lambda^*\in H^1(\R^+)$ be its monotone rearrangement on $\R^+$. By standard properties of rearrangements on graphs (see e.g. \cite[Section 2]{DDGS}), $u_\lambda\in\NN_{6,1}(\G_\lambda)$ implies $\sigma_{6,1}(u_\lambda^*)\leq 1$, that together with the equimeasurability of rearrangements entails
	\[
	\JJ_{6,\R^+}(1)\leq J_{6,1}(\sigma_{6,1}(u_\lambda^*)u_\lambda^*,\R^+)=\frac13\sigma_{6,1}(u_\lambda^*)^6\|u_\lambda^*\|_{L^6(\R^+)}^6\leq\frac13\|u_\lambda\|_{L^6(\G_\lambda)}^6=\JJ_{6,\G_{{\lambda}}}(1)\,.
	\]
	By \eqref{eq:limR+}, the previous inequalities become equalities in the limit for $\lambda\to0^+$. This implies that $(\sigma_{6,1}(u_\lambda^*)u_\lambda^*)_\lambda$ is a minimizing sequence for $J_{6,1}(\cdot,\R^+)$ in $\NN_{6,1}(\R^+)$. As so, it converges strongly in $H^1(\R^+)$ to $\phi_1$, and in particular
	\[
	\|\sigma_{6,1}(u_\lambda^*)u_\lambda^*\|_{L^2(\R^+)}^2\to\mu_{\R^+}\qquad\text{as }\lambda\to0^+\,.
	\]
	Since the above identities also show that $\sigma_{6,1}(u_\lambda^*)\to1$, exploiting again the equimeasurability of rearrangements yields $\|u_\lambda\|_{L^2(\G_\lambda)}^2\to\mu_{\R^+}$ as $\lambda\to0^+$, i.e. \eqref{eq:M61half}, and we conclude.
\end{proof}

Combining Lemma \ref{lem:pend} and Proposition \ref{prop:est6}(ii), we can now repeat the computations in the final part of Section \ref{sec:comp} to complete the proof of Theorem \ref{thm:noncomp} also for the 2- and 3-fork graphs. Indeed, since both graphs have exactly one half-line and at least one pendant, we obtain
\[
\lim_{\lambda\to0^+}M_{6,\G}^\pm(\lambda)=\mu_{\R^+}=\lim_{\lambda\to\infty}M_{6,\G}^\pm(\lambda).
\]
In light of this, to show that there exist $0<\lambda_1<\lambda_2$ for which $M_{6,\G}^-(\lambda_1)>M_{6,\G}^+(\lambda_2)$, we are left to exhibit at least one value of $\lambda>0$ satisfying $M_{6,\G}^-(\lambda)=M_{6,\G}^+(\lambda)\neq\mu_{\R^+}$. If this were not the case, arguing analogously to Section \ref{sec:comp} (and recalling that $\JJ_{6,\G}(0)=0$) we would have
\[
\JJ_{6,\G}(\lambda)=\int_0^\lambda\JJ_{6,\G}'(s)\,ds= \frac{\mu_{\R^+}}2\lambda
\]
for every $\lambda\in\R^+\setminus\widetilde{Z}_{6,\G}$, and thus for every $\lambda\in\R^+$ by the continuity of $\JJ_{6,\G}$. By Remark \ref{rem:lev6R}, this would mean $\JJ_{6,\G}\equiv\JJ_{6,\R^+}$, providing the contradiction we seek, since in fact it holds $\JJ_{6,\G}(\lambda)>\JJ_{6,\R^+}(\lambda)$ for every $\lambda>0$. This latter inequality can be easily deduced noting that, if equality holds, then the monotone rearrangement on $\R^+$ of any action ground state in $\NN_{6,\lambda}(\G)$ must coincide with the half-soliton $\phi_1$, but this is impossible due to the strict positivity (up to sign) of action ground states everywhere on $\G$.

\section{$\Z$-periodic graphs: proof of Theorem \ref{thm:per}}
\label{sec:Zper}

This final section is devoted to the proof of Theorem \ref{thm:per}, that reports on non-uniqueness of energy ground states on the ladder graph. Exactly as for Theorem \ref{thm:noncomp} on noncompact graphs with finitely many edges, the line of the proof of Theorem \ref{thm:per} is identical to that of Theorem \ref{thm:compact}. Hence, analogously to what we did in Section \ref{sec:Ghalf}, here we only highlight the main differences with respect to Sections \ref{sec:prel}--\ref{sec:est6}--\ref{sec:comp}. Nevertheless, this section is rather longer than the previous one, because recovering some of these results on the ladder graph requires much more work.

From now on, $\G$ will always denote the ladder graph (Figure \ref{fig:Zper}) with edges of length 1. In some cases, it will be convenient to think of $\G$ as embedded in $\R^2$ through the identification
\begin{equation}
\label{eq:GinR2}
\begin{split}
&\V_\G\simeq\bigcup_{k\in\Z}(k,0)\cup(k,1)\\
&\E_\G\simeq E_0\cup E_1 \cup E_2\,,
\end{split}
\end{equation}
where
\[
E_0=\bigcup_{k\in\Z}[k,k+1]\times\{0\},\qquad E_1=\bigcup_{k\in\Z}[k,k+1]\times\{1\},\qquad E_2=\bigcup_{k\in\Z}\{k\}\times[0,1]\,.
\]
For every $k\in\Z$, we will denote by $e_{0,k}$ the edge $[k,k+1]\times\{0\}$ in $E_0$, by $e_{1,k}$ the edge $[k,k+1]\times\{1\}$ in $E_1$, and by $e_{2,k}$ the edge $\{k\}\times[0,1]$ in $E_2$.

Let us then start discussing the extension to the ladder graph of the results in Section \ref{sec:prel}. Proposition \ref{prop:Ecomp}(i) does not change (see \cite[Theorem 1.1]{D19}). Conversely, Proposition \ref{prop:Ecomp}(ii) has to be modified according to \cite[Theorem 1.2]{D19}: the $L^2$-critical energy ground state level satisfies 
\[
\EE_{6,\G}(\mu)=\begin{cases}
	0 & \text{if }\mu\leq\mu_\R\\
	-\infty & \text{if }\mu>\mu_\R
\end{cases}
\]
and it is never attained. All properties of $\Lambda_{p,\G}^\pm$ are unchanged, and we only have to comment on how to prove \eqref{eq:lim}. Recall the Gagliardo--Nirenberg inequalities
\begin{equation*}
\|u\|_p^p\leq K_p\|u\|_2^{\frac p2+1}\|u'\|_2^{\frac p2-1},\qquad\forall p>2\,,
\end{equation*}
holding for every $u\in H^1(\G)$ and for suitable constants $K_p>0$ (see e.g. \cite{D19}). For every $p\in(2,6)$, coupling with the strict negativity of $\EE_{p,\G}$ shows that there exists $c_p>0$ (depending only on $p$) such that $\EE_{p,\G}(\mu)\geq-c_p\mu^{\frac{p+2}{6-p}}$ and $\|u\|_p^p\leq c_p\mu^{\frac{p+2}{6-p}}$ for every energy ground state $u$ in $H_\mu^1(\G)$. Since $(p+2)/(6-p)>1$ when $p\in(2,6)$, combining with \eqref{eq:LE} yields the first line of \eqref{eq:lim}. Similarly, the case $\lambda\to\infty$ follows by \eqref{eq:LE} and the estimate $\EE_{p,\G}(\mu)\leq -C_p\mu^{\frac{p+2}{6-p}}$, that for sufficiently large $\mu$ can be obtained e.g. considering functions in $H_\mu^1(\G)$ compactly supported on a single edge of the graph.  

The properties of $\JJ_{p,\G}(\lambda)$ for $\lambda>0$ given in Proposition \ref{prop:Acomp} have been proved on periodic graphs e.g. in \cite[Section 5]{DDGST}. That $\JJ_{p,\G}(\lambda)=0$ for every $\lambda<0$ is a consequence of the fact that, for any such $\lambda$, there exists a compact subset of $\G$ where the first eigenvalue of $-d^2/dx^2$ endowed with homogeneous Dirichlet conditions at the boundary (and standard Kirchhoff conditions at the vertices in the interior of the set) is exactly $-\lambda$, whereas the identity $\JJ_{p,\G}(0)=0$ can be proved arguing e.g. as in Step 1 of the proof of Lemma \ref{lem:GtoR} below. As for the properties of $M_{p,\G}^\pm$, nothing changes with respect to Section \ref{sec:prel} (in particular, one can show that they are attained for every $\lambda>0$ arguing exactly as in Part 1 of the proof of \cite[Theorem 1.7]{DDGST}). 

Contrary to Propositions \ref{prop:Ecomp}--\ref{prop:Acomp}--\ref{prop:AEgs} (this latter is always the same on any graph), the extension of the content of Remark \ref{rem:comp} to the ladder graph calls for a finer analysis. 
\begin{lemma}
	\label{lem:contp}
	For every $\overline{p}\in(2,\infty)$ and $\lambda>0$, there results $\displaystyle\lim_{p\to\overline{p}}\JJ_{p,\G}(\lambda)=\JJ_{\overline{p},\G}(\lambda)$.
\end{lemma}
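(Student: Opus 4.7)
The plan is to prove continuity of the action ground state level by establishing
\[
\limsup_{p \to \bar p} \JJ_{p,\G}(\lambda) \leq \JJ_{\bar p,\G}(\lambda) \leq \liminf_{p \to \bar p} \JJ_{p,\G}(\lambda)
\]
separately. The upper bound follows from a test-function argument in the spirit of Remark \ref{rem:comp}, while the lower bound requires a concentration-compactness analysis tailored to the $\Z$-periodicity of the ladder.

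For the upper bound I would take an action ground state $\bar u \in \NN_{\bar p,\lambda}(\G)$ realizing $\JJ_{\bar p,\G}(\lambda)$ and use it as a test function for $p$ close to $\bar p$. The key preliminary fact is that $\bar u$, being a positive $H^1$-solution of \eqref{eq:nlse} with $\lambda>0$, decays exponentially along the periodic direction: decomposing $\G$ along its fundamental domains $\G_k$ one has $\|\bar u\|_{H^1(\G_k)}\to 0$ by $\ell^2$-summability, hence $\|\bar u\|_{L^\infty(\G_k)}\to 0$ by local Sobolev embedding; in the region where $\bar u$ is small the equation forces $\bar u''\geq (\lambda/2)\bar u$, and a standard maximum-principle argument yields $\bar u\in L^q(\G)$ for every $q>0$. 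Therefore $|\bar u|^p$ is dominated uniformly in $p$ (for $p$ in a small interval around $\bar p$) by $|\bar u|^{p_-}+|\bar u|^{p_+} \in L^1(\G)$, the Dominated Convergence Theorem gives $\|\bar u\|_p^p\to\|\bar u\|_{\bar p}^{\bar p}$ and hence $\sigma_{p,\lambda}(\bar u)\to 1$, and repeating the computation \eqref{eq:complimsup} verbatim yields the desired $\limsup$ inequality.

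For the lower bound let $p_n\to\bar p$ and let $u_n\in\NN_{p_n,\lambda}(\G)$ realize $\JJ_{p_n,\G}(\lambda)$. The upper bound implies $(u_n)_n$ is bounded in $H^1(\G)$, and the relation $u_n\geq\lambda^{1/(p_n-2)}$ at any local maximum is uniform in $n$. Using the $\Z$-translation invariance of $\G$, I would replace each $u_n$ by a translate whose $L^\infty$-maximum lies in a fixed fundamental domain $\Omega_0$; then, up to a subsequence, $u_n\rightharpoonup\bar u$ in $H^1(\G)$ and uniformly on the compact subgraph $\Omega_0$, so $\bar u\not\equiv 0$. The crucial and only genuinely new step is then upgrading this to strong $L^q(\G)$-convergence for every $q\geq 2$: once this is granted, all three terms of $J_{p_n,\lambda}(u_n,\G)$ pass to the limit, $\bar u\in\NN_{\bar p,\lambda}(\G)$, and $\liminf_n\JJ_{p_n,\G}(\lambda)=J_{\bar p,\lambda}(\bar u,\G)\geq\JJ_{\bar p,\G}(\lambda)$, which together with the upper bound concludes the proof.

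The hard part is ruling out dichotomy in the concentration-compactness alternative for $(u_n)_n$. If dichotomy occurred, $u_n$ would split into two (or more) non-interacting pieces located near far-apart $\Z$-translates of $\Omega_0$, and the quantities $\|u_n'\|_2^2+\lambda\|u_n\|_2^2$ and $\|u_n\|_{p_n}^{p_n}$ would decompose additively, up to $o(1)$ errors generated by suitable cut-offs. Rescaling each piece onto $\NN_{p_n,\lambda}(\G)$ via the factor $\sigma_{p_n,\lambda}$ and using a uniform lower bound on $\JJ_{p_n,\G}(\lambda)$ (which follows from the Gagliardo--Nirenberg inequality on $\G$ with Sobolev constants that are bounded on compact intervals of $p$), each rescaled piece would contribute at least $\JJ_{p_n,\G}(\lambda)-o(1)$ to the action, forcing $\JJ_{p_n,\G}(\lambda)=J_{p_n,\lambda}(u_n,\G)\geq 2\JJ_{p_n,\G}(\lambda)-o(1)$, a contradiction. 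Implementing this splitting-exclusion rigorously on the ladder---choosing the cut-off scale, tracking the distribution of $\|u_n\|_{L^2}^2$ across $\Z$-translates of $\Omega_0$, and controlling the Nehari-rescaling errors uniformly in $n$---will be the main technical effort. No new conceptual ingredient beyond standard concentration-compactness on $\Z$-periodic graphs is expected to be needed, but the bookkeeping makes this lemma noticeably more delicate than its counterparts in Sections \ref{sec:prel} and \ref{sec:Ghalf}.
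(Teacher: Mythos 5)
Your $\limsup$ inequality is fine and is essentially what the paper does (it simply re-runs the computation of Remark \ref{rem:comp} with a ground state at $\overline p$ as test function; the exponential decay you invoke is more than needed, since $|\overline u|^{p}\le\|\overline u\|_\infty^{p-2}|\overline u|^{2}$ already provides an $L^1$ dominating function for $p$ close to $\overline p$). The $\liminf$ part is where your route diverges and where there is a genuine gap: the dichotomy exclusion as you state it does not go through. First, from $u_n\in\NN_{p_n,\lambda}(\G)$ and the additive splitting of $\|u_n'\|_2^2+\lambda\|u_n\|_2^2$ and $\|u_n\|_{p_n}^{p_n}$ you can only infer that \emph{at least one} of the two pieces has $\sigma_{p_n,\lambda}\le 1+o(1)$: the Nehari identity does not localize, the other piece may satisfy $\sigma_{p_n,\lambda}>1$, and for such a piece the inequality $J_{p_n,\lambda}(\sigma v,\G)\ge\JJ_{p_n,\G}(\lambda)$ gives no lower bound on its actual contribution $(\tfrac12-\tfrac1{p_n})\|v\|_{p_n}^{p_n}$ to $J_{p_n,\lambda}(u_n,\G)$. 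Hence the claim that ``each rescaled piece contributes at least $\JJ_{p_n,\G}(\lambda)-o(1)$'', and with it the contradiction $\JJ_{p_n,\G}(\lambda)\ge 2\JJ_{p_n,\G}(\lambda)-o(1)$, is unjustified as written. Second, and more seriously, dichotomy is not restricted to two bumps sitting near far-apart translates of $\Omega_0$: the escaping portion may spread out with $L^\infty$ norm tending to zero, carrying a fixed amount of $L^2$ mass and of quadratic form but negligible $L^{p_n}$ mass. Your doubling argument says nothing in this case (that piece contributes $o(1)$ to the action), and on the ladder this is exactly the scenario to worry about: long, slowly decaying tails. It can be handled — e.g. if the escaping quadratic mass is bounded below, the retained piece has $\sigma_{p_n,\lambda}\le 1-c$, which contradicts the uniform positive lower bound on $\JJ_{p_n,\G}(\lambda)$ — but this case is absent from your sketch, so the upgrade to strong $L^q(\G)$ convergence is not established.

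For comparison, the paper never proves strong $L^2(\G)$ convergence inside this lemma and does not need it: the conclusion only requires $\|u_n\|_{p_n}^{p_n}/\|u_n\|_{\overline p}^{\overline p}\to 1$ together with $\overline u\not\equiv 0$, and then \eqref{eq:complimsup}--type computations close the argument. Its Step 1 is a surgery version of your two-bump exclusion (cutting the two rails at low-value points and extending linearly), with the extra bookkeeping that the cut pieces lie in Nehari manifolds with shifted frequencies $\lambda_{n,i}$, the contradiction coming from the monotonicity of $\JJ_{p_n,\G}(\cdot)$; its conclusion is only that the superlevel set $\{u_n\ge 1\}$ stays in a fixed compact set. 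The ingredient that replaces your ``hard part'' is Step 2: a Kirchhoff-condition monotonicity argument for the vertex values along the ladder, yielding the uniform decay $u_n(\vv_j)\le C(|j|-\overline k)^{-1/2}$, which provides an explicit dominating function in $L^\gamma(\G)$ for every $\gamma>2$; dominated convergence then gives $\|u_n\|_{p_n}^{p_n}-\|u_n\|_{\overline p}^{\overline p}\to 0$ and strong $L^{\overline p}$ convergence, which suffices (strong $H^1$, hence $L^2$, convergence is recovered only afterwards from the Nehari identities). If you wish to keep your concentration-compactness route, you must either add the spread-out-tail case to the dichotomy analysis as indicated above, or prove a uniform decay estimate of the paper's type.
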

\begin{proof}
	
	Let $\overline{p}\in(2,\infty)$ be fixed and, with no loss of generality, $\lambda=1$. To ease the notation, since the graph is always the same, in the rest of the proof we suppress the explicit dependence of all quantities on $\G$ and $\lambda$ (e.g. we write $\JJ_p$ in place of $\JJ_{p,\G}(1)$).
	
	Note first that, arguing exactly as in Remark \ref{rem:comp}, we obtain
	\begin{equation}
	\label{eq:limsup}
	\limsup_{p\to\overline{p}}\JJ_p\leq \JJ_{\overline{p}}\,.
	\end{equation}
	Hence, we are left to show that
	\begin{equation}
		\label{eq:liminf}
		\JJ_{\overline{p}}\leq\liminf_{p\to\overline{p}} \JJ_{p}.
	\end{equation}
	The proof of \eqref{eq:liminf} is a bit long, so we split it into three steps.
	
	\smallskip
	{\em Step 1.} Let $(p_n)_n$ be any sequence such that $p_n\to\overline{p}$ as $n\to\infty$, and $u_n\in\NN_{p_n}$ be such that $J_{p_n}(u_n)=\JJ_{p_n}$ for every $n$. As usual, we can take $u_n>0$ on $\G$. Moreover, exploiting \eqref{eq:GinR2} and the periodicity of $\G$, we can assume with no loss of generality that each $u_n$ attains its $L^\infty$ norm somewhere in $\K_0:=\displaystyle \bigcup_{i=0}^2 e_{i,0}$. 
	
	We claim that there exists a compact set $\K\subset\G$, independent of $n$, such that
	\begin{equation}
		\label{eq:compactA}
	\left\{x\in\G\,:\,u_n\geq1\right\}\subset \K\qquad\forall n\,.
	\end{equation}
	Observe first that, since $u_n$ solves \eqref{eq:nlse} with $p=p_n$ and $\lambda=1$, then 
	\begin{equation}
		\label{eq:un>1}
	u_n(\texttt{t})\geq1\qquad\forall n
	\end{equation}
	at any local maximum point $\texttt{t}\in\G$ of $u_n$, so that the superlevel set $\{u_n\geq1\}$ is non-empty. Furthermore, since $u_n$ attains its $L^\infty$ norm in $\K_0$  by assumption, we have $\{u_n\geq1\}\cap\displaystyle \K_0\neq\emptyset$.
	
	Assume by contradiction that \eqref{eq:compactA} is false, namely that (up to subsequences) there exists $\texttt{t}_n\in\G$ such that $u_n(\texttt{t}_n)\geq1$ for every $n$ and $d_\G(\texttt{t}_n,\K_0)\to\infty$ as $n\to\infty$ (where $d_\G(\texttt{t}_n,\K_0)$ is the distance in $\G$ between the point $\texttt{t}_n$ and the set $\K_0$). Relying on \eqref{eq:GinR2}, let $\texttt{t}_n=(x_n,y_n)\in\R^2$, so that (up to subsequences and with no loss of generality) $x_n\to\infty$ as $n\to\infty$.

	Since $(u_n)_n$ is bounded in $H^1(\G)$ by \eqref{eq:JonN} and \eqref{eq:limsup}, for every $\delta>0$ it must be
	\begin{equation}
		\label{eq:delta}
	\sup_n\#\left\{k\in\Z\cap[0,x_n]\,:\,\min_{e_{0,k}}u_n>\delta\right\}<\infty\,.
	\end{equation}
	Indeed, if this were not the case, there would exists $\overline\delta>0$ and a subsequence (not relabeled) $u_n$ along which $u_n\geq\overline\delta$ on a number of edges that diverges as $n\to\infty$ (since $x_n\to\infty$), in turn yielding $\|u_n\|_2\to\infty$ as $n\to\infty$ and contradicting the boundedness in $H^1(\G)$. Since the same argument shows that \eqref{eq:delta} remains true when replacing $e_{0,k}$ by $e_{1,k}$, it follows that for every $\delta>0$ there exists $N_\delta\in\N$ such that, for every $n\geq N_\delta$, there is at least one value $k_{\delta,n}\in\Z\cap[0,x_n]$ for which
	\[
	\max\left\{\min_{e_{0,k_{\delta,n}}}u_n,\min_{e_{1,k_{\delta,n}}}u_n\right\}\leq\delta\,.
	\]
	Consider then the following construction. For fixed $n$, take one point in $e_{0,k_{\delta,n}}$ where $u_n$ realizes the value $\displaystyle m_{n,0}:=\min_{e_{0,k_{\delta,n}}}u_n$, one point in $e_{1,k_{\delta,n}}$ where $u_n$ realizes $\displaystyle m_{n,1}:=\min_{e_{1,k_{\delta,n}}}u_n$, and split $\G$ in two disjoint parts $\G_{n,1},\G_{n,2}$ by cutting $\displaystyle e_{0,k_{\delta,n}}, e_{1,k_{\delta,n}}$ at these points. Denote by $u_{n,1},u_{n,2}$ the restrictions of $u_n$ to $\G_{n,1},\G_{n,2}$ respectively. It is straightforward to see that we can construct (e.g. by extending $u_n$ linearly to zero at the cutting points on additional intervals of fixed length) two new functions $\widetilde{u}_{n,1},\widetilde{u}_{n,2}\in H^1(\G)$ such that, for $i=1,2$,
	\begin{equation}
		\label{eq:utilde}
\|\widetilde{u}_{n,i}'\|_{L^2(\G)}^2=\|u_{n,i}'\|_{L^2(\G_{n,i})}^2+ C \alpha_{2,n}\,,\qquad	\|\widetilde{u}_{n,i}\|_{L^r(\G)}^r=\|u_{n,i}\|_{L^r(\G_{n,i})}^r+ C_r \alpha_{r,n},\quad\forall r\geq2,
	\end{equation}
	where 
	\[
	\alpha_{r,n}:=m_{n,0}^r+m_{n,1}^r\,,\qquad\forall r\geq2,
	\]
	and
	with $C_r,C>0$ independent of $n$ and $\delta$, and uniformly bounded from above and away from zero on $r\in[2,2\overline{p}]$. By the boundedness in $H^1(\G)$ of $(u_n)_n$ it then follows that $(\widetilde{u}_{n,i})_n$ is bounded in $H^1(\G)$ too, and since by construction $\|\widetilde{u}_{n,i}\|_\infty\geq1$ (because both $u_{n,1}$ and $u_{n,2}$ are not smaller than 1 at least at one point),  we also have that $\|\widetilde{u}_{n,i}\|_{2}$ (and thus also $\|u_{n,i}\|_2$) is bounded away from 0 uniformly on $n$ (e.g. by the standard Gagliardo-Nireberg inequality $\|v\|_\infty\leq\|v'\|_2^{1/2}\|v\|_2^{1/2}$, that holds for every $v\in H^1(\G)$). Hence, $\widetilde{u}_{n,i}\in\NN_{p_n,\lambda_{n,i}}(\G)$, for some $\lambda_{n,i}\in\R$  bounded from above uniformly on $n$ and such that
	\[
	\begin{split}
		\sum_{i=1}^2\|u_{n,i}\|_{L^2(\G_{n,i})}^2&\,=\|u_n\|_{L^2(\G)}^2=\|u_n\|_{L^{p_n}(\G)}^{p_n}-\|u_n'\|_{L^2(\G)}^2=\sum_{i=1}^2\|u_{n,i}\|_{L^{p_n}(\G_{n,i})}^{p_n}-\|u_n'\|_{L^2(\G)}^2\\
		&\,=\sum_{i=1}^2\|\widetilde{u}_{n,i}\|_{L^{p_n}(\G)}^{p_n}-2C_{p_n}\alpha_{p_n,n}-\|u_n'\|_{L^2(\G)}^2\\
		&\,=\sum_{i=1}^2\left(\|\widetilde{u}_{n,i}'\|_{L^2(\G)}^2+\lambda_{n,i}\|\widetilde{u}_{n,i}\|_{L^2(\G)}^2\right)-2C_{p_n}\alpha_{p_n,n}-\|u_n'\|_{L^2(\G)}^2\\
		&\,=\sum_{i=1}^2\lambda_{n,i}\|u_{n,i}\|_{L^2(\G_{n,i})}^2+(2C+C_2(\lambda_{n,1}+\lambda_{n,2}))\alpha_{2,n}-2C_{p_n}\alpha_{p_n,n}\,.
	\end{split}
	\]
	Rearranging terms, this is
	\[
	\lambda_{n,1}=1+(1-\lambda_{n,2})\frac{\|u_{n,2}\|_2^2}{\|u_{n,1}\|_2^2}-\frac{2C+C_2(\lambda_{n,1}+\lambda_{n,2})}{\|u_{n,1}\|_2^2}\alpha_{2,n}+\frac{2C_{p_n}}{\|u_{n,1}\|_2^2}\alpha_{p_n,n}\,.
	\]
	Observe that, since by construction $0<\alpha_{p_n,n}\leq2\delta^{p_n}$, taking a sequence $\delta_n\to0$ as $n\to\infty$ and a suitable subsequence (not relabeled) along which $\displaystyle \overline{\lambda}_i:=\lim_{n\to\infty}\lambda_{n,i}$ exist for both $i=1,2$ (this can be done since $(\lambda_{n,i})_n$ is bounded uniformly on $n$), passing to the limit as $n\to\infty$ we obtain
	\[
	\overline{\lambda}_1=1+\theta(1-\overline{\lambda}_2)
	\]
	for some $\theta>0$ because $\|u_{n,i}\|_2$ is bounded from above and away from zero uniformly on $n$, for both $i=1,2$.  In turn, this implies either that the largest between $\overline{\lambda}_1$ and $\overline{\lambda}_2$, say $\overline{\lambda}_1$, is strictly greater than 1, or that $\overline{\lambda}_1=\overline{\lambda}_2=1$. Hence, for large $n$, either $\lambda_{n,1}>1$ or both $\lambda_{n,1},\lambda_{n,2}$ tend to 1. But this is impossible, because by \eqref{eq:utilde} and the definition of $\lambda_{n,i}$ we have
	\[
	\begin{split}
	\JJ_{p_n}(1)&\,=\left(\frac12-\frac1{p_n}\right)\|u_n\|_{L^{p_n}(\G)}^{p_n}=\left(\frac12-\frac1{p_n}\right)\left(\|\widetilde{u}_{n,1}\|_{L^{p_n}(\G)}^{p_n}+\|\widetilde{u}_{n,2}\|_{L^{p_n}(\G)}^{p_n}-2C_{p_n}\alpha_{p_n,n}\right)\\
	&\,\geq \JJ_{p_n}(\lambda_{n,1})+\JJ_{p_n}(\lambda_{n,2})-2C_{p_n}\left(\frac12-\frac1{p_n}\right)\delta_n^{p_n}>\JJ_{p_n}(1)\,,
	\end{split}
	\]
	the last inequality following by the monotonicity and the non-negativity of $\JJ_{p_n}$ and the fact that, for sufficiently large $n$, $\JJ_{p_n}(\lambda_{n,1})+\JJ_{p_n}(\lambda_{n,2})$ is either greater than $\JJ_{p_n}(\overline{\lambda}_1)-\delta_n>\JJ_{p_n}(1)$ (if $\overline{\lambda}_1>1$), or greater than $2\JJ_{p_n}(1)-\delta_n>\JJ_{p_n}(1)$ (if $\overline{\lambda}_1=\overline{\lambda}_2=1$). This provides the contradiction we seek and completes the proof of \eqref{eq:compactA}.
	
	\smallskip
	{\em Step 2.} In view of Step 1, there exists $\overline{k}\in\Z$, independent of $n$, such that $u_n<1$ on $e_{i,k}$, for every $i=0,1,2$ and $|k|\geq\overline k$. Consider now, for every $n$, the function $f_n:[\overline k,\infty)\cap \Z\to\R$ defined by
	\[
	f_n(j):=\max\left\{u_n(\vv_{j,0}), u_n(\vv_{j,1})\right\}\,,
	\]
	where $\vv_{j,0},\vv_{j,1}\in\V_\G$ are the vertices of the edge $e_{2,j}$ with coordinates $(j,0)$, $(j,1)$ respectively.  
	
	We claim that $f_n$ is non-increasing on $[\overline k,\infty)\cap \Z$ for every $n$. 
	Indeed, assume by contradiction that this is not the case, namely that there exists $k\in[\overline k,\infty)\cap \Z$ such that $f_n(k)<f_n(k+1)$. With no loss of generality, let $\vv_{k+1,1}$ be such that $u_n(\vv_{k+1,1})=f_n(k+1)$.  Observe that, since $u_n$ is strictly smaller than 1 on $e_{i,j}$ for every $i=0,1,2$ and $j\geq\overline k$, the maximum of $u_n$ on each of this edge is attained at one of its vertices (because if $u_n$ has a maximum point in the interior of an edge, then by \eqref{eq:un>1} it cannot be smaller than 1 everywhere inside it). Therefore, since $u_n(\vv_{k+1,1})>\max\left\{u_n(\vv_{k,0}),u_n(\vv_{k,1})\right\}$ by assumption, the outgoing derivative of $u_n$ at $\vv_{k+1,1}$ along $e_{1,k}$ is negative. Analogously, since $u_n(\vv_{k+1,1})\geq u_n(\vv_{k+1,0})$, the same is true for the outgoing derivative of $u_n$ at $\vv_{k+1,1}$ along $e_{2,k+1}$.  By Kirchhoff conditions, this means that the outgoing derivative of $u_n$ at $\vv_{k+1,1}$ along $e_{1,k+1}$ is positive, i.e. that there exists a point on $e_{1,k+1}$ where $u_n$ is strictly greater than $u_n(\vv_{k+1,1})$. But this is impossible, since it would imply that the restriction of $u_n$ to the portion of $\G$ given by $\displaystyle \bigcup_{j\geq k+1}(e_{0,j}\cup e_{1,j}\cup e_{2,j})$ has a local maximum point somewhere outside $e_{2,k+1}$ (because this subset of $\G$ is connected, $u_n$ is continuous, there is at least one point where $u_n$ is strictly greater than what it is on $e_{2,k+1}$, and $u_n$ goes to 0 on $e_{i,j}$ as $j\to\infty$ for every $i=0,1,2$), where it would thus be greater than or equal to 1, contradicting Step 1. This proves the monotonicity of $f_n$ for every $n$.
	
 	For every $j\geq\overline k$, let then $\vv_j$ be a vertex of $e_{2,j}$ for which $u_n(\vv_j)=f_n(j)$. Since
 	\[
 	\begin{split}
 	\left|\int_{\bigcup_{i=0}^2 e_{i,j}}|u_n(y)|^2\,dy-3|u_n(\vv_j)|^2\right|&\,=\left|\int_{\bigcup_{i=0}^2 e_{i,j}}(|u_n(y)|^2-|u_n(\vv_j)|^2)\,dy\right|\\
 	&\,=\left|\int_{\bigcup_{i=0}^2 e_{i,j}}\int_{\vv_j}^y (u_n^2)'(s)\,ds\,dy\right|\leq 3\int_{\bigcup_{i=0}^2 e_{i,j}}|u_n|^2+|u_n'|^2\,dx\,,
 \end{split}
 	\]
	summing over $j$ and recalling that $(u_n)_n$ is bounded in $H^1(\G)$ it follows that $\displaystyle\sum_{j\geq\overline k}u_n(\vv_j)^2<\infty$. By the monotonicity of $f_n$ we then obtain, for every fixed $k>\overline k$,
	\[
	\sum_{j\geq\overline k}u_n(\vv_j)^2>\sum_{j=\overline k}^k u_n(\vv_j)^2\geq (k-\overline k)u_n(\vv_k)^2\,.
	\]
	Since the same argument can be repeated identically for $j\leq-\overline k$, it follows that there exists a constant $C>0$, independent of $n$, such that
	\begin{equation}
		\label{eq:decay}
		u_n(\vv_j)\leq \frac C{\sqrt{|j|-\overline k}}\qquad\forall |j|>\overline k,\quad\forall n.
	\end{equation}

	\smallskip
	{\em Step 3.} Let now $p_n\to\overline p$ as $n\to\infty$ be such that $\displaystyle\lim_{n\to\infty}\JJ_{p_n}=\liminf_{p\to\overline{p}}\JJ_p$, and $u_n\in\NN_{p_n}$ be such that $J_{p_n}(u_n)=\JJ_{p_n}$ for every $n$. As above, let $u_n>0$ on $\G$ and attain its $L^\infty$ norm somewhere in $\K_0$. We prove that, up to subsequences, 
	\begin{equation}
		\label{eq:rapporto}
		\lim_{n\to\infty}\frac{\|u_n\|_{p_n}^{p_n}}{\|u_n\|_{\overline p}^{\overline p}}=1\,.
	\end{equation}
	Observe first that, since $(u_n)_n$ is bounded in $H^1(\G)$, up to subsequences it converges weakly in $H^1(\G)$ to some $\overline u$. Note that $\overline{u}\not\equiv 0$ on $\G$, because if this were the case, since $u_n$ always attains its $L^\infty$ norm in the fixed compact set $\K_0$, the convergence in $L_{\text{\normalfont loc}}^\infty(\G)$ of $u_n$ to $\overline u$ would yield $\|u_n\|_\infty\to0$, contradicting \eqref{eq:un>1}. Hence, to obtain \eqref{eq:rapporto} we can equivalently prove that $u_n\to\overline u$ in $L^{\overline p}(\G)$ and 
	\begin{equation}
		\label{eq:diff}
		\|u_n\|_{p_n}^{p_n}-\|u_n\|_{\overline p}^{\overline p}\to0\qquad\text{as }n\to\infty.
	\end{equation}
	To do this, let $\overline k$ be as in Step 2 and set $\displaystyle \K:=\bigcup_{|j|\leq\overline k}(e_{0,j}\cup e_{1,j}\cup e_{2,j})$. Since $\K$ is compact, $u_n\to \overline{u}$ in $L^{\overline{p}}(\K)$ and $\|u_n\|_{L^{p_n}(\K)}^{p_n}-\|u_n\|_{L^{\overline p}(\K)}^{\overline p}\to0$ as $n\to\infty$, and we are left to prove the same convergence on $\G\setminus\K$. Note that by construction $\displaystyle \G\setminus\K$ is the disjoint union of the subgraphs $\displaystyle\G_-:=\bigcup_{j\leq-(\overline k +1)}(e_{0,j}\cup e_{1,j}\cup e_{2,j})$ and $\displaystyle \G_+:=\bigcup_{j\geq\overline k +1}(e_{0,j}\cup e_{1,j}\cup e_{2,j})$. We now show that the convergence we seek holds on $\G_+$, the argument is identical for $\G_-$. 
	
	By \eqref{eq:decay}, recall that $\displaystyle \|u_n\|_{L^\infty(\bigcup_{i=0}^2e_{i,j})}\leq f_n(j)<\frac C{\sqrt{j-\overline k}}$ for every $j>\overline k$. Consider then the function $g:\G_+\to\R$ given by $\displaystyle g(x):=\frac 1{\sqrt{j-\overline k}}$ for every $\displaystyle x\in\bigcup_{i=0}^2e_{i,j}$ and $j>\overline{k}$, that belongs to $L^\gamma(\G_+)$ for every $\gamma>2$. Since $|u_n(x)|^{p_n}-|u_n(x)|^{\overline{p}}\to0$ for a.e. $x\in\G_+$ as $n\to\infty$, and (recalling that $u_n<1$ on $\G_+$ and $p_n\to\overline p>2$)
	\[
	\left||u_n(x)|^{p_n}-|u_n(x)|^{\overline{p}}\right|\leq f_n(j)^{p_n}+f_n(j)^{\overline p}\leq Cg(x)^\gamma\,\qquad\forall x\in\bigcup_{i=0}^2e_{i,j},\,\forall j>\overline k\,,
	\]
	for every $n$ large enough and suitable $C>0$ and $\gamma>2$ independent of $n$, the Dominated Convergence Theorem gives \eqref{eq:diff} on $\G_+$. The strong convergence of $u_n$ to $\overline u$ in $L^{\overline{p}}(\G)$ can be obtained in the same way. This completes the proof of \eqref{eq:diff}, and thus that of \eqref{eq:rapporto}.
	
	Since \eqref{eq:rapporto} is equivalent to $\displaystyle\lim_{n\to\infty}\sigma_{\overline p}(u_n)=1$ (up to subsequences), arguing exactly as in \eqref{eq:convJpnp} shows that, up to subsequences,
	\[
	\JJ_{\overline{p}}\leq \lim_{n\to\infty}\JJ_{p_n}
	\]
	and the inequality is actually true for the whole sequence $(u_n)_n$ since the limit on the right side exists by assumption and is equal to $\displaystyle \liminf_{p\to\overline{p}}\JJ_p$. This yields \eqref{eq:liminf} and concludes the proof of the lemma. 
	\end{proof}
Exploiting Lemma \ref{lem:contp} we can now retrieve \eqref{eq:boundM} on ladder graphs. Indeed, if for fixed $\lambda>0$ we take $p_n\to\overline p$ and $u_n\in\NN_{p_n,\lambda}(\G)$ such that $\JJ_{p_n,\G}(\lambda)=J_{p_n,\lambda}(u_n,\G)$, arguing as in the proof of Lemma \ref{lem:contp} we obtain that up to subsequences $u_n\rightharpoonup u$ in $H^1(\G)$, $u_n\to u$ in $L^{\overline{p}}(\G)$, and $\|u_n\|_{p_n}^{p_n}\to\|u\|_{\overline{p}}^{\overline{p}}$ as $n\to\infty$. By weak lower semicontinuity, this implies $\sigma_{\overline{p},\lambda}(u)\leq 1$. The usual argument, together with Lemma \ref{lem:contp}, then shows that $u\in\NN_{\overline{p},\lambda}(\G)$ and $\JJ_{\overline{p},\G}(\lambda)=J_{\overline{p},\lambda}(u,\G)$. Furthermore, since $u_n\rightharpoonup u$ in $H^1(\G)$, $u\in\NN_{\overline{p},\lambda}(\G)$, $u_n\in\NN_{p_n,\lambda}(\G)$ and $\|u_n\|_{p_n}^{p_n}\to\|u\|_{\overline{p}}^{\overline{p}}$, the convergence of $u_n$ to $u$ is strong in $H^1(\G)$, and thus in $L^2(\G)$, yielding \eqref{eq:boundM}.

The extension of Proposition \ref{prop:est6} (that reduces to \eqref{eq:critmasR} and \eqref{eq:JGJR} since the ladder has no pendant) does not require any comment (its proof could be even simplified a bit on the ladder), and the same is true for Lemma \ref{lem:l1>l2}. Hence, to complete the proof of Theorem \ref{thm:per} we are left again to show that there exist $0<\lambda_1<\lambda_2$ such that $M_{6,\G}^-(\lambda_1)>M_{6,\G}^+(\lambda_2)$. This is done by combining Proposition \ref{prop:est6} with the next lemma.
\begin{lemma}
	\label{lem:GtoR}
	For every $\varepsilon>0$ there exists $\widehat{\lambda}>0$ (depending on $\varepsilon$) such that
	\begin{equation}
		\label{eq:muLto0}
		\sqrt{6}\,\mu_\R-\varepsilon\leq M_{6,\G}^-(\lambda)\leq M_{6,\G}^+(\lambda)\leq \sqrt{6}\,\mu_\R+\varepsilon\qquad\forall\lambda\leq\widehat{\lambda}.
	\end{equation}
\end{lemma}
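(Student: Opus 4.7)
The plan is to follow the structure of the proof of Lemma \ref{lem:pend}, adapting it to the $\Z$-periodic ladder. The relevant limit object is the soliton on $\R$ rather than on a half-line, but rescaled by factors reflecting the three-sheeted character of the ladder. After the substitution $u_\lambda(x) := \lambda^{-1/4}u(x/\sqrt{\lambda})$, action ground states in $\NN_{6,\lambda}(\G)$ correspond bijectively to action ground states in $\NN_{6,1}(\G_\lambda)$ with $\G_\lambda := \sqrt{\lambda}\,\G$, preserving the $L^2$ norm, so the lemma reduces to proving
\[
\lim_{\lambda \to 0^+}\|u_\lambda\|_{L^2(\G_\lambda)}^2 = \sqrt{6}\,\mu_\R
\]
uniformly on the set of positive action ground states in $\NN_{6,1}(\G_\lambda)$. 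The heuristic is that every edge of $\G_\lambda$ has length $\sqrt{\lambda}\to 0$, so any function with bounded $H^1$ norm is effectively a function of the horizontal coordinate alone; in each horizontal unit interval $\G_\lambda$ accumulates three units of total edge length (top rail, bottom rail, rungs) but only the two rails contribute to the $L^2$ norm of the derivative. This produces the effective variational problem on $\R$ of minimizing $\|f\|_{L^6(\R)}^6$ under $2\|f'\|_{L^2(\R)}^2 + 3\|f\|_{L^2(\R)}^2 = 3\|f\|_{L^6(\R)}^6$, whose minimizer is $f(x) = \phi_1(\sqrt{3/2}\,x)$ and whose ``ladder mass'' is $3\|f\|_{L^2(\R)}^2 = \sqrt{6}\,\mu_\R$.

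For the upper bound I would plug into $J_{6,1}(\cdot,\G_\lambda)$ the test function $v$ defined by $v(x,y) := \phi_1(\sqrt{3/2}\,x)$ on the two horizontal rails of $\G_\lambda$ and extended as the constant $\phi_1(\sqrt{3/2}\,k\sqrt{\lambda})$ on the rung at horizontal coordinate $k\sqrt{\lambda}$, for each $k \in \Z$. A direct computation (Riemann sums with mesh $\sqrt{\lambda}$ for the rung contributions, legitimized by the $L^1$ integrability of $\phi_1$ and its powers) yields $\|v\|_{L^q(\G_\lambda)}^q = 3\sqrt{2/3}\,\|\phi_1\|_{L^q(\R)}^q + o(1)$ for $q \in \{2,6\}$ and $\|v'\|_{L^2(\G_\lambda)}^2 = 2\sqrt{3/2}\,\|\phi_1'\|_{L^2(\R)}^2 + o(1)$, so $\sigma_{6,1}(v) \to 1$ and
\[
\JJ_{6,\G_\lambda}(1) \leq J_{6,1}(\sigma_{6,1}(v)v,\G_\lambda) \to \sqrt{6}\,\JJ_{6,\R}(1)\qquad\text{as }\lambda \to 0^+.
\]
By \eqref{eq:JonN} this also yields a uniform $H^1(\G_\lambda)$ bound on any positive action ground state in $\NN_{6,1}(\G_\lambda)$.

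For the matching lower bound and the convergence of the mass, let $u_\lambda \in \NN_{6,1}(\G_\lambda)$ be any positive action ground state, denote by $B_\lambda, T_\lambda, R_\lambda$ the bottom rail, top rail and union of all rungs of $\G_\lambda$ respectively, and set $f_\lambda(x) := u_\lambda(x,0)$ for $x \in \R$, identifying $B_\lambda$ with $\R$. Cauchy--Schwarz on each rung of length $\sqrt{\lambda}$ gives the quantitative top-bottom estimate
\[
\sum_{k \in \Z}\bigl|u_\lambda(k\sqrt{\lambda},0) - u_\lambda(k\sqrt{\lambda},1)\bigr|^2 \leq \sqrt{\lambda}\,\|u_\lambda'\|_{L^2(R_\lambda)}^2 \to 0,
\]
which combined with $H^1$-continuity along horizontal edges yields $\bigl|\|u_\lambda\|_{L^q(T_\lambda)}^q - \|f_\lambda\|_{L^q(\R)}^q\bigr| \to 0$ for $q \in \{2,6\}$. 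A similar Cauchy--Schwarz argument on each rung together with a Riemann-sum approximation (see obstacles below) gives $\|u_\lambda\|_{L^q(R_\lambda)}^q = \|f_\lambda\|_{L^q(\R)}^q + o(1)$ and $\|u_\lambda'\|_{L^2(R_\lambda)}^2 = o(1)$. Altogether the Nehari identity on $\G_\lambda$ becomes
\[
2\|f_\lambda'\|_{L^2(\R)}^2 + 3\|f_\lambda\|_{L^2(\R)}^2 = 3\|f_\lambda\|_{L^6(\R)}^6 + o(1),\qquad \tfrac{1}{3}\|u_\lambda\|_{L^6(\G_\lambda)}^6 = \|f_\lambda\|_{L^6(\R)}^6 + o(1).
\]
The substitution $f_\lambda(x) = w_\lambda(\sqrt{3/2}\,x)$ turns the first identity into the standard $\R$-Nehari at $\lambda = 1$ up to $o(1)$, so $\sigma_{6,1}(w_\lambda) \to 1$. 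Combined with the upper bound this forces $\sigma_{6,1}(w_\lambda)w_\lambda$ to be a minimizing sequence for $\JJ_{6,\R}(1)$, and by the classical uniqueness of the $L^2$-critical soliton on $\R$ up to translations (recalled in Remark \ref{rem:lev6R}) it converges in $H^1(\R)$, after an appropriate translation, to $\phi_1$; in particular the translation-invariant quantity $\|\sigma_{6,1}(w_\lambda)w_\lambda\|_{L^2(\R)}^2$ tends to $\mu_\R$, whence
\[
\|u_\lambda\|_{L^2(\G_\lambda)}^2 = 3\|f_\lambda\|_{L^2(\R)}^2 + o(1) = 3\sqrt{2/3}\,\mu_\R + o(1) = \sqrt{6}\,\mu_\R + o(1).
\]

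The main technical obstacle is to justify the Riemann-sum approximations and to make all $o(1)$ errors uniform both over the infinite graph and over the whole set of action ground states. I would handle this via a localization step analogous to Step 2 of the proof of Proposition \ref{prop:est6}: the uniform $L^6$ bound on $u_\lambda$ together with the vanishing edge length forces $\max_{e \in \E_{\G_\lambda}}\min_{e} u_\lambda \to 0$, and standard phase-plane analysis of $u'' + u^5 = u$ along the rails produces uniform exponential decay of $u_\lambda$ away from its maximum point, which by the $\Z$-invariance of $\G$ may be translated into a fixed fundamental domain. This decay makes the tails of the Riemann sums truly negligible and renders all the preceding estimates uniform on the set of positive action ground states in $\NN_{6,1}(\G_\lambda)$.
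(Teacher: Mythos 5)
Your overall strategy coincides with the paper's: rescale to $\G_\lambda=\sqrt{\lambda}\,\G$, recognize the effective one-dimensional problem with Nehari constraint $2\|f'\|_{L^2(\R)}^2+3\|f\|_{L^2(\R)}^2=3\|f\|_{L^6(\R)}^6$ whose minimizer is $\phi_1(\sqrt{3/2}\,\cdot)$, prove the upper bound with a test function equal to a profile on the two rails and constant on the rungs, and compare the three restrictions of a ground state via Cauchy--Schwarz across the rungs. However, as written your lower-bound step has a genuine gap: to turn the Nehari identity on $\G_\lambda$ into the \emph{equality} $2\|f_\lambda'\|_{L^2(\R)}^2+3\|f_\lambda\|_{L^2(\R)}^2=3\|f_\lambda\|_{L^6(\R)}^6+o(1)$ you need both $\|u_\lambda'\|_{L^2(R_\lambda)}^2=o(1)$ and $\|u_\lambda'\|_{L^2(T_\lambda)}^2=\|f_\lambda'\|_{L^2(\R)}^2+o(1)$, and neither follows from the tools you invoke: Cauchy--Schwarz and Riemann sums give closeness of the restrictions in $L^q$, which carries no information on their derivatives, and there is no a priori smallness of the derivative on the rungs. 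The paper avoids this entirely by choosing (WLOG) the rail with the \emph{smaller} derivative norm and discarding the rung and top-rail derivative terms through a one-sided inequality, obtaining only $\widetilde{\sigma}(f_\lambda)\le 1+o(1)$; combined with the Step-1 upper bound this forces all inequalities to become asymptotic equalities and yields $\|f_\lambda\|_{L^2}^2\to\sqrt{2/3}\,\mu_\R$. Your scheme is repairable by exactly this modification, but the equality you assert is unproved and is in fact the hardest of your claims.

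The second problem is your uniformity paragraph. You transplant Step 2 of Proposition \ref{prop:est6} (which lives in the regime $\lambda\to\infty$, where edges of $\G_\lambda$ become long) to the present regime $\lambda\to 0^+$, where every edge of $\G_\lambda$ has length $\sqrt{\lambda}\to 0$. There the claim $\max_{e\in\E_{\G_\lambda}}\min_{e}u_\lambda\to 0$ is false: near its maximum point a ground state is bounded below by a positive constant on entire edges, since the limiting profile varies on the unit scale while edges shrink. Moreover the exponential-decay machinery is not needed: all the error terms in the comparison of the restrictions are controlled by $C\lambda\|u_\lambda'\|_{L^2(\G_\lambda)}^2$, which is uniformly small thanks to the $H^1$ bound provided by the upper bound of Step 1, and since the whole argument can be run along an arbitrary sequence of ground states (and $M_{6,\G}^\pm$ are attained), no separate uniformity-through-decay step is required. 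Dropping that paragraph and replacing your equality by the paper's one-sided inequality yields a complete proof.
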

\begin{proof}
	Observe as usual that, if $u\in\NN_{6,\lambda}(\G)$ is such that $J_{6,\lambda}(u,\G)=\JJ_{6,\G}(\lambda)$, then $u_\lambda(x):=\lambda^{-1/4}u(x/\sqrt{\lambda})\in\NN_{6,1}(\G_\lambda)$ satisfies $J_{6,1}(u_\lambda,\G_\lambda)=\JJ_{6,\G_{{\lambda}}}(1)$ and $\|u_\lambda\|_{L^2(\G_\lambda)}=\|u\|_{L^2(\G)}$, with $\G_\lambda:=\sqrt{\lambda}\,\G$. Hence, \eqref{eq:muLto0} is proved if we show that
	\begin{equation}
		\label{eq:Lto0}
		\lim_{\lambda\to0^+}M_{6,\G_\lambda}^\pm(1)=\sqrt{6}\,\mu_\R\,.
	\end{equation}
To prove \eqref{eq:Lto0}, we will often think of $\G_\lambda$ as embedded in $\R^2$, denoting by 
\[
\begin{split}
&E_0^\lambda:=\bigcup_{k\in\Z}\left[\sqrt{\lambda}\,k,\sqrt{\lambda}(k+1)\right]\times\{0\}, \qquad E_1^\lambda:=\bigcup_{k\in\Z}\left[\sqrt{\lambda}\,k,\sqrt{\lambda}(k+1)\right]\times\left\{\sqrt{\lambda}\right\},\\ &\qquad\qquad\qquad\qquad\qquad\qquad E_2^\lambda:=\bigcup_{k\in\Z}\left\{\sqrt{\lambda}\,k\right\}\times\left[0,\sqrt{\lambda}\right],
\end{split}
\]
the analogues of $E_0,E_1,E_2$ as in \eqref{eq:GinR2}, and by $e_{i,k}^\lambda\in E_i^\lambda, i=0,1,2$, the corresponding edges.

In what follows, a key role will be played by the variational problem on $\R$
\[
\widetilde{\JJ}_\R:=\inf_{v\in\widetilde{\NN}(\R)}\widetilde{J}(v,\R)\,,
\]
where
\[
\widetilde{J}(v,\R):=\|v'\|_{L^2(\R)}^2+\frac32\|v\|_{L^2(\R)}^2-\frac12\|v\|_{L^6(\R)}^6
\]
and
\[
\begin{split}
\widetilde{\NN}(\R):=&\,\left\{v\in H^1(\R)\,:\,\widetilde{J}'(v,\R)v=0\right\}\\
=&\,\left\{v\in H^1(\R)\,:\,2\|v'\|_{L^2(\R)}^2+3\|v\|_{L^2(\R)}^2=3\|v\|_{L^6(\R)}^6\right\}\,.
\end{split}
\]
It is easy to check that (up to translations and changes of sign) the unique solution of $\widetilde{\JJ}_\R$ is given by 
\begin{equation}
	\label{eq:phitilde}
	\widetilde{\phi}(x):=\phi_1\left(\sqrt{\frac32}\,x\right),
\end{equation} 
where $\phi_1$ is the soliton \eqref{eq:phi1}.

The rest of the proof is divided in four steps.

\smallskip
{\em Step 1.} We first show that 
\begin{equation}
	\label{eq:J<Jtilde}
	\limsup_{\lambda\to0^+}\JJ_{6,\G_{{\lambda}}}(1)\leq\widetilde{\JJ}_\R\,.
\end{equation} 
To this end, let $v\in\widetilde{\NN}(\R)$ be fixed, and consider the function $v_\lambda:\G_\lambda\to\R$ defined by
\[
v_\lambda(x):=\begin{cases}
	v(x) & \text{if }x\in E_0^\lambda\\
	v(x) & \text{if }x\in E_1^\lambda\\
	v\big(\sqrt{\lambda}\,k\big) & \text{if }x\in e_{2,k}^\lambda,\text{ for some }k\in\Z\,,
\end{cases}
\] 
where with a slight abuse of notation we identified $E_0^\lambda, E_1^\lambda$ with two real lines. Since $v\in H^1(\R)$, by definition for every $r\geq2$ 
\[
\|v_\lambda\|_{L^r(E_2^\lambda)}^r=\sqrt{\lambda}\sum_{k\in\Z}\left|v\big(\sqrt{\lambda}\,k\big)\right|^r\to\|v\|_{L^r(\R)}^r\qquad\text{as }\lambda\to0^+\,,
\]
so that, for $\lambda\to0^+$,
\[
\sigma_{6,1}(v_\lambda)=\left(\frac{\|v_\lambda'\|_{L^2(\G_\lambda)}^2+\|v_\lambda\|_{L^2(\G_\lambda)}^2}{\|v_\lambda\|_{L^6(\G_\lambda)}^6}\right)^\frac14=\left(\frac{2\|v'\|_{L^2(\R)}^2+3\|v\|_{L^2(\R)}^2+o(1)}{3\|v\|_{L^6(\R)}^6+o(1)}\right)^\frac14=1+o(1)\,,
\]
and thus
\[
\JJ_{6,\G_\lambda}(1)\leq J_{6,1}(\sigma_{6,1}(v_\lambda)v_\lambda,\G_\lambda)=\frac13\sigma_{6,1}(v_\lambda)^6\|v_\lambda\|_{L^6(\G_\lambda)}^6=\|v\|_{L^6(\R)}^6+o(1)=\widetilde{J}(v,\R)+o(1)
\]
(the last equality follows by the definition of $\widetilde{J}(\cdot,\R)$ and $\widetilde{\NN}(\R)$). Passing to the limsup as $\lambda\to0^+$ and taking the infimum over $v\in\widetilde{\NN}(\R)$ gives \eqref{eq:J<Jtilde}.

\smallskip
{\em Step 2. } Let now $u_\lambda\in\NN_{6,1}(\G_\lambda)$ be such that $J_{6,1}(u_\lambda,\G_\lambda)=\JJ_{6,\G_{{\lambda}}}(1)$. As usual, assume with no loss of generality $u_\lambda>0$ on $\G_\lambda$ and that, by the periodicity of the graph, it always attains its $L^\infty$ norm somewhere in the set $\K_0^\lambda:=\displaystyle\bigcup_{i=0}^2 e_{i,0}^\lambda$. 

For $i=0,1,2$, denote by $u_{\lambda,i}$ the restriction of $u_\lambda$ to the set $E_i^\lambda$. Note that $u_{\lambda,0}$ and $u_{\lambda,1}$ can be thought of as functions in $H^1(\R)$, whereas $u_{\lambda,2}$ can be seen as a function in $L^r(\R)$, for every $r\geq2$. We then show that
\begin{equation}
	\label{eq:limLr}
	\left\|u_{\lambda,i}-u_{\lambda,j}\right\|_{L^r(\R)}=o(1)\qquad\text{as }\lambda\to0^+
\end{equation}
for every $r\in[2,\infty)$ and every $i,j=0,1,2$, $i\neq j$.

Let us prove \eqref{eq:limLr} explicitly when $i=0$, $j=1$ (the other cases are analogous). To this end, for every $k\in\Z$ and $x\in\big[0,\sqrt{\lambda}\big]$, with a slight abuse of notation we denote by $u_{\lambda,0}(x), u_{\lambda,1}(x)$ the values of $u_\lambda$ at the points of $e_{0,k}^\lambda, e_{1,k}^\lambda$ corresponding to $(x,0), (x,\sqrt{\lambda})$, respectively. Then , by H\"older inequality,
\[
\left|u_{\lambda,0}(x)-u_{\lambda,1}(x)\right|^2\leq3\sqrt\lambda \int_{\bigcup_{i=0}^2e_{i,k}^\lambda}|u_\lambda'|^2\,dy\qquad\forall x\in\big[0,\sqrt{\lambda}\big],\forall k\in\Z,
\]
so that, integrating in $x$ over $\big[0,\sqrt{\lambda}\big]$ and summing over $k\in\Z$,
\[
\|u_{\lambda,0}-u_{\lambda,1}\|_{L^2(\R)}^2\leq 3 \lambda\sum_{k\in\Z}\int_{\bigcup_{i=0}^2e_{i,k}^\lambda}|u_\lambda'|^2\,dy=3\lambda\|u_\lambda'\|_{L^2(\G_\lambda)}^2\,,
\]
in turn yielding \eqref{eq:limLr} with $r=2$ since, by \eqref{eq:J<Jtilde}, $\|u_\lambda\|_{H^1(\G_\lambda)}$ is bounded from above uniformly on $\lambda\to0^+$. The inequality for $r>2$ then follows by the one with $r=2$ and the uniform boundedness of $u_\lambda$ in $L^\infty(\G_\lambda)$.

\smallskip
{\em Step 3.} We now prove that 
\begin{equation}
	\label{eq:J>Jtilde}
	\widetilde{\JJ}_\R\leq\liminf_{\lambda\to0^+}\JJ_{6,\G_\lambda}(1).
\end{equation}
To this end, recall first that, for any given point of a ladder graph, one can always construct two disjoint paths in the graph of infinite length starting at that point. This ensures that, for every $\lambda$, any non-negative function in $H^1(\G_\lambda)$ attains almost all the values in its image at least twice on the graph. As a consequence, standard rearrangement techniques (see e.g. \cite[Section 2]{DDGS}) imply that $\JJ_{6,\G_{{\lambda}}}(1)\geq\JJ_{6,\R}(1)$ for every $\lambda$, in turn guaranteeing that $\displaystyle \liminf_{\lambda\to0^+}\JJ_{6,\G_\lambda}(1)$ is bounded away from 0.

Let then $u_\lambda\in\NN_{6,1}(\G_\lambda)$ be such that $J_{6,1}(u_\lambda,\G_\lambda)=\JJ_{6,\G_{{\lambda}}}(1)$ for every $\lambda$, and $\displaystyle\lim_{\lambda\to0^+}J_{6,1}(u_\lambda,\G_\lambda)=\liminf_{\lambda\to0^+}\JJ_{6,\G_\lambda}(1)$. By Steps 1--2 and the fact that $J_{6,1}(u_\lambda,\G_\lambda)$ is bounded away from 0 uniformly on $\lambda$, we obtain that $\|u_{\lambda,i}\|_{L^6(\R)}$ is bounded away from zero too, for every $i=0,1,2$. Suppose with no loss of generality 
\begin{equation}
\label{eq:u'0<u'1}	
\|u_{\lambda,0}'\|_{L^2(\R)}\leq\|u_{\lambda,1}'\|_{L^2(\R)}\qquad\forall\lambda\,,
\end{equation}
so that, by \eqref{eq:limLr} and $u_\lambda\in\NN_{6,1}(\G_\lambda)$,
\[
\begin{split}
2\|u_{\lambda,0}'\|_{L^2(\R)}^2+3\|u_{\lambda,0}\|_{L^2(\R)}^2&\,\leq\sum_{i=0}^2\left(\|u_{\lambda,i}'\|_{L^2(\R)}^2+\|u_{\lambda,i}\|_{L^2(\R)}^2\right)+o(1)\\
&\,=\|u_{\lambda}'\|_{L^2(\G_\lambda)}^2+\|u_{\lambda}\|_{L^2(\G_\lambda)}^2+o(1)=\|u_\lambda\|_{L^6(\G_\lambda)}^6+o(1)\\
&\,=\sum_{i=0}^2\|u_{\lambda,i}\|_{L^6(\R)}^6+o(1)=3\|u_{\lambda,0}\|_{L^6(\R)}^6+o(1)\qquad\text{as $\lambda\to0^+$,}
\end{split}
\]
that is 
\[
\widetilde{\sigma}(u_{\lambda,0}):=\left(\frac{2\|u_{\lambda,0}'\|_{L^2(\R)}^2+3\|u_{\lambda,0}\|_{L^2(\R)}^2}{3\|u_{\lambda,0}\|_{L^6(\R)}^6}\right)^\frac14\leq 1+o(1)\qquad\text{as }\lambda\to0^+\,.
\]
Since $\widetilde{\sigma}(u_{\lambda,0})u_{\lambda,0}\in\widetilde{\NN}(\R)$ for every $\lambda$, as $\lambda\to0^+$ we then have
\[
\widetilde{\JJ}_\R\leq \widetilde{J}(\widetilde{\sigma}(u_{\lambda,0})u_{\lambda,0},\R)=\widetilde{\sigma}(u_{\lambda,0})^6\|u_{\lambda,0}\|_{L^6(\R)}^6\leq\frac13\|u_\lambda\|_{L^6(\G_\lambda)}^6+o(1)= J_{6,1}(u_\lambda,\G_\lambda)+o(1)\,,
\]
i.e. \eqref{eq:J>Jtilde}. 

\smallskip
{\em Step 4. } Combining \eqref{eq:J<Jtilde} and \eqref{eq:J>Jtilde} entails $\displaystyle\lim_{\lambda\to0^+}\JJ_{6,\G_{{\lambda}}}(1)=\widetilde{\JJ}_\R$. Moreover, the argument in Step 3 also implies
$\widetilde{\sigma}(u_{\lambda,0})\to1$ and $\widetilde{J}(\widetilde{\sigma}(u_{\lambda,0})u_{\lambda,0},\R)\to\widetilde{\JJ}_\R$  as $\lambda\to0^+$. All in all, this is enough to conclude that $u_{\lambda,0}$ converges strongly in $H^1(\R)$ to the function $\widetilde{\phi}$ in \eqref{eq:phitilde}, so that
\[
\lim_{\lambda\to0^+}\|u_\lambda\|_{L^2(\G_\lambda)}^2=3\lim_{\lambda\to0^+}\|u_{\lambda,0}\|_{L^2(\R)}^2=3\|\widetilde{\phi}\|_{L^2(\R)}^2=3\sqrt{\frac23}\|\phi_1\|_{L^2(\R)}^2=\sqrt{6}\,\mu_\R\,.
\]
Since the limit holds along any sequence of ground states of $J_{6,1}(\cdot,\G_\lambda)$ in $\NN_{6,1}(\G_\lambda)$, it gives \eqref{eq:Lto0} and completes the proof of the lemma.
\end{proof}

\section*{Acknowledgements}
This study was carried out within the project E53D23005450006 “Nonlinear dispersive equations in presence of singularities” – funded by European Union – Next Generation EU within the PRIN 2022 program (D.D. 104 -02/02/2022 Ministero dell’Università e della Ricerca). This manuscript reflects only the author’s views and opinions and the Ministry cannot be considered responsible for them.

\end{document}